\documentclass[a4paper, 12pt]{article}
\usepackage{amsmath}
\usepackage{amssymb}
\usepackage{amsthm}
\usepackage{mathtools}
\usepackage[final]{showkeys}
\usepackage[utf8]{inputenc}
\usepackage[T1]{fontenc}
\usepackage{lmodern}
\usepackage[a4paper, margin=1in]{geometry}

\usepackage[colorlinks=true, citecolor =violet, linkcolor = violet, urlcolor  = violet]{hyperref}

\usepackage{graphicx}
\usepackage{tikz}
\usepackage{pgfplots}
\pgfplotsset{compat=1.17}

\usepackage{changes}

\newcommand{\R}{\mathbb{R}}

\newcommand{\fa}{\forall}

\newcommand{\commentout}[1]{}

\usepackage{enumitem}
\usepackage{xcolor}
\usepackage{microtype}
\usepackage{physics}
\usepackage{array}
\usepackage{booktabs}
\newtheorem{theorem}{Theorem}
\newtheorem{remark}[theorem]{Remark}
\newtheorem{definition}{Definition}
\newtheorem{proposition}{Proposition}
\newtheorem{corollary}[theorem]{Corollary}
\title{Asymptotic dynamics of inhibitory networks for the NNLIF Model in the large-delay limit}
\date{\today}

\author{
Cl\'ement Rieutord%
\:\thanks{Corresponding author, Sorbonne Universit\'e, CNRS, Universit\'e de Paris Cit\'e,
Laboratoire Jacques-Louis Lions (LJLL), F-75005 Paris, France.
\newline\texttt{clement.rieutord@sorbonne-universite.fr}}
\and
Delphine Salort%
\thanks{Sorbonne Universit\'e, CNRS, Universit\'e de Paris Cit\'e,
Laboratoire Jacques-Louis Lions (LJLL), F-75005 Paris, France.
\newline\texttt{dsalort@gmail.com}}
}

\begin{document}

\maketitle

\begin{abstract}
 We investigate the impact of large synaptic delays on the emergence of periodic dynamics in inhibitory neuronal networks, within the framework of the NNLIF model. 
\newline 
Inspired by the work of \cite{CaceresCanizo2024}  where the notion of pseudo-equilibria was introduced and developed, and by our earlier analysis in \cite{zbMATH08113917},  we show that, as the delay tends to infinity, solutions of sufficitently inhibitory networks oscillate between distinct pseudo-equilibria over any finite time interval. Employing the Doeblin–Harris method, we rigorously establish a local convergence in the Cesàro mean toward a limit function determined solely by these pseudo-equilibria.
\end{abstract}

\noindent{\makebox[1in]\hrulefill}\newline

\textit{Keywords:}Integrate-and-Fire, Fokker-Planck equation, Periodic solutions, Doeblin-Harris Method, Mathematical neuroscience\\
\textit{Mathematics Subject Classification.}  35B40, 35Q84, 35Q92 
,35B10
\newpage

\tableofcontents

\section{Introduction}

\subsection{Context}
To investigate the role of synaptic delay in the dynamics of certain inhibitory neuronal networks, we focus on the NNLIF model introduced in \cite{BrHa}. In recent years, the analysis of this model has experienced significant development, leading to a well-established mathematical theory combining both deterministic and probabilistic techniques, even if several fundamental questions remain open. We refer to \cite{RouxCar} for a comprehensive survey on this class of models and, more generally, on PDE models arising in neuroscience.
In its formulation with instantaneous transmission of neuronal activity, the model reads as follows:
\begin{equation}\label{eq:FP général}
\left\{
\begin{aligned}
& \frac{\partial u}{\partial t}(t,v) + \frac{\partial}{\partial v}\big((bN(t) - v)u(t,v)\big) - \frac{\partial^2 u}{\partial v^2}(t,v) = \delta_{V_R}(v)N(t), \quad t \geq 0, , v \leq V_F, \\
& u(t,V_F) = 0, \\
& N(t) = -\frac{\partial u}{\partial v}(t,V_F).
\end{aligned}
\right.
\end{equation}
The function $u(t,v)$ denotes the probability density of finding a neuron in a  network at time $t$ with membrane potential $v \in (-\infty,V_F)$. The constant $V_F$ represents the firing threshold, while $V_R$ denotes the reset potential.
The Dirichlet boundary condition at $v = V_F$ models the firing mechanism: neurons reaching the threshold instantaneously emit a spike and are immediately reset to the potential $V_R$. In particular, no mass accumulates at the threshold.
\newline 
The firing rate $N(t)$, describing the flux of neurons discharging at time $t$, is defined by the outgoing boundary flux
\[
N(t) = -\partial_v u(t,V_F).
\]
This flux is reinjected at the reset potential through the source term $\delta_{V_R}(v)N(t)$, which formally ensures conservation of the total mass:
\[
\frac{d}{dt} \int_{-\infty}^{V_F} u(t,v)\, dv = 0.
\]
Finally, the parameter $b \in \mathbb{R}$ measures the strength of synaptic connectivity in the network: $b>0$ corresponds (on average) to excitatory interactions, whereas $b<0$ describes inhibitory networks; $b=0$ corresponds to the case without interconnections. 

 \noindent The linear case $b=0$ is well understood: one can show exponential convergence toward a unique stationary state \cite{CCP}. In contrast, the nonlinearity induced by the term $bN(t)$ in the excitatory case ($b>0$) seems to preclude the emergence of periodic solutions. Instead, only two distinct scenarios  seem to occur numerically: either asymptotic convergence toward a stationary state or finite-time blow-up, but never periodic behavior \cite{caceres2018global}.
\newline 
In order to understand the more complex dynamics that may arise in the excitatory regime, several works have focused on asymptotic convergence toward a unique stationary state in the case of weak nonlinearity, on the local stability analysis of stationary states, and on the possible extension of solutions beyond blow-up when it occurs. It is important to emphasize that blow-up may prevent global-in-time well-posedness, so that solutions are not always globally defined \cite{CCP}, \cite{CaPe}.

 \noindent The inhibitory case is  more tractable, as the problem is globally well-posed for all $b<0$ \cite{CGGS}. Numerical simulations suggest that, for any $b \le 0$, the solution converges toward its unique stationary state, with no other types of dynamics observed. However, a rigorous proof of this convergence remains an open problem in the general case.  Indeed, global asymptotic convergence toward a unique stationary state has only been established for small $|b|$ (weak interconnections) \cite{CPSS}, whereas it has been shown that, in the general inhibitory case, the stationary state is locally stable \cite{zbMATH08030978}.

 \noindent 
One possible explanation for the lack of variety in the asymptotic dynamics, even though some richness could in principle be hidden in the blow-up behavior, is the assumption of instantaneous transmission of neuronal activity. We then introduce  a synaptic delay as follows
\begin{equation}\label{eq:FP delay}
\left\{
\begin{aligned}
& \frac{\partial u}{\partial t}(t,v) + \frac{\partial}{\partial v}\big((bN(t-d) - v)u(t,v)\big) - \frac{\partial^2 u}{\partial v^2}(t,v) = \delta_{V_R}(v)N(t), \quad t \geq 0,  v \leq V_F, \\
& u(t,V_F) = 0 \quad t\geq0, \\
& N(t) = -\frac{\partial u}{\partial v}(t,V_F)\quad  t\geq0,\\
&  u(0,v) \in \mathcal{M}^+_v((-\infty, V_F)), \ N(t-d) = N_{\text{ini}}, \quad t\in [0,d],
\end{aligned}
\right.
\end{equation}
 where \[
\mathcal{M}^+_v((-\infty, V_F)) = \left\{ p \in L_v^1((-\infty, V_F)), \ p \geq 0 \text{ and } \int_{-\infty}^{V_F} p(v) \, dv = 1 \right\}
\] is
the set of probability densities on the membrane potential variable and 
\[
L_v^1((-\infty, V_F)) = \left\{ p \in L^1((-\infty, V_F]), \  \int_{-\infty}^{V_F} (1 + v^2) |p(v)| \, dv < \infty \right\}
\]
is the set of integrable functions with finite second moment.
This formalism not only resolves the well-posedness issues that arise in the excitatory case, where blow-up can occur with instantaneous transmission, but also significantly broadens the spectrum of possible dynamics. However, numerical studies have suggested that, in the excitatory regime, delay alone 
is insufficient to generate periodic solutions \cite{caceres2018global}. In this setting, 
periodic solutions were numerically observed in \cite{zbMATH07047481} only upon 
adding a refractory period. In contrast, in inhibitory networks, simulations indicate that once both the delay and the connectivity parameter $b$ are sufficiently large, more complex dynamics emerge, including convergence toward periodic profiles.
A first mathematical analysis considered the case of fixed delay $d>0$ and very strong inhibition ($b \to -\infty$) \cite{IRSS2022}. Another study focused on the opposite asymptotic regime, where the delay tends to infinity while $b<0$ remains fixed \cite{CaceresCanizo2024}. In this latter work, the notion of pseudo-equilibria was introduced, allowing the extraction of the main heuristic mechanisms underlying the emergence of these periodic solutions, via oscillations between multiple pseudo-equilibria.
The objective of the present work is to provide a rigorous framework showing that, in the large-delay limit, the solution of our equation can indeed oscillate between multiple pseudo-equilibria, on arbitrary finite time intervals.

\subsection{Main results and strategy.}

 \noindent Before stating our main result and outlining the strategy of the proof, we first introduce the notion of pseudo-equilibria, following \cite{CaceresCanizo2024}, and recall the heuristic strategy they developed to describe how periodic solutions emerge from the interplay between these states. 
\begin{definition}\label{def:equilibria}[Pseudo-equilibria]
Let $J \geq 0$ and $ b \in \R$. The pseudo-equilibria associated with Equation~\eqref{eq:FP delay} and the parameter $J$ is the normalized nonnegative function on $(-\infty,V_F)$ defined by
\begin{equation}\label{eq:stat J}
   \overline{u}_b(J,v) = \Phi_b(J) \exp\Big(-\frac{(-bJ+v)^2}{2}\Big)
    \left[ \int_{\max(V_R, v)}^{V_F} \exp\Big(\frac{(-bJ+z)^2}{2}\Big) dz \right],
\end{equation}
where the normalization constant $\Phi_b(J)$ is given by
\[
\Phi_b(J) = \left( \int_{-\infty}^{V_F} \exp\Big(-\frac{(-bJ+v)^2}{2}\Big) 
\left[ \int_{\max(V_R, v)}^{V_F} \exp\Big(\frac{(-bJ+z)^2}{2}\Big) dz \right] dv \right)^{-1},
\]
and where $b$ denotes the connectivity parameter in Equation~\eqref{eq:FP delay}. 
\end{definition}
We recognize that this corresponds to the nonnegative normalized ($\int_{-\infty}^{V_F} \overline{u}(bJ,v)\, dv = 1$) stationary solution of the linearized system
\begin{equation}\label{eq:linearized}
\left\{
\begin{aligned}
& \frac{\partial u}{\partial t}(t,v) + \frac{\partial}{\partial v}\big((bJ - v) u(t,v)\big) - \frac{\partial^2 u}{\partial v^2}(t,v) = \delta_{V_R}(v) N(t), \quad t \geq 0, \ v \leq V_F, \\
& u(t,V_F) = 0, \quad t \geq 0, \\
&   u(0,v) \in \mathcal{M}^+_v((-\infty, V_F)) \\
& N(t) = -\frac{\partial u}{\partial v}(t,V_F), \quad t \geq 0, \\
\end{aligned}
\right.
\end{equation}
and that $\Phi_b(J)$ is the flux of neurons associated to the stationary state. From a heuristic point of view, if we formally assume $d = +\infty$ in Equation \eqref{eq:FP delay}, the term $N(t-d)$ is replaced by a constant input $N_{\text{ini}} = J$, and the system reduces to the linear equation \eqref{eq:linearized}. Focusing on the asymptotic dynamics of the solution to Equation \eqref{eq:linearized}, its linear structure allows one to prove, using entropy methods or the Doeblin approach, that the solution and the flux of neurons $N(t)$ converge exponentially fast toward its unique stationary state, which coincides with the pseudo-equilibrium defined above. However, there is a priori no reason for the relation $\Phi_b(J) = J$ to hold. If this heuristic argument is iterated, we find, in the limit $d \to +\infty$, with the linear equation \eqref{eq:linearized} where $J$ has been replaced by $\Phi_b(J)$. By repeating this process, one obtains a recurrence sequence $(\Phi_b^n(J))_{n \in \mathbb{N}}$. If this sequence possesses a stable $2$-cycle, it implies that in the $d \to +\infty$ limit (in the sense defined above), the sequence  $(\Phi_b^n(J))_{n \in \mathbb{N}}$ can oscillates asymptotically between two distinct values. In this regard,  the following proposition proved in \cite{CaceresCanizo2024}, holds:
\begin{proposition}
\label{prop:phi_decreasing}
Let $b < 0$ and let $\overline{N}$ be the unique solution to $\Phi_b(N)=N$. 
There exists a critical value $b^* < 0$ such that:
\begin{enumerate}
    \item If $b^* \leq b < 0$, then for any $N_{\mathrm{ini}} \ge 0$, the sequence 
    $\big(\Phi_b^n(N_{\mathrm{ini}})\big)_{n \in \mathbb{N}}$ 
    converges to $\overline{N}$.
\item If $b < b^*$, there exist two distinct values $N^-$ and $N^+$ such that 
    $N^- < \overline{N} < N^+$, and for any $N_{\mathrm{ini}} \in [0, +\infty) \setminus \{\overline{N}\}$, 
    the sequence $\big(\Phi_b^n(N_{\mathrm{ini}})\big)_{n \in \mathbb{N}}$ 
    converges to the 2-cycle $(N^-, N^+)$.
\end{enumerate}
\end{proposition}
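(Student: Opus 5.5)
The plan is to reduce the statement to the one-dimensional dynamics of the map $\Phi_b:[0,\infty)\to(0,\infty)$, using an explicit formula for it. Substituting $w=v-bJ$ and $z\mapsto z-bJ$ in the double integral defining $\Phi_b(J)^{-1}$, and using Fubini (integrate first in $v$ over $(-\infty,z)$), gives
\[
\Phi_b(J)^{-1}=\int_{V_R-bJ}^{V_F-bJ} e^{z^2/2}\int_{-\infty}^{z} e^{-w^2/2}\,dw\,dz=:\int_{V_R-bJ}^{V_F-bJ} F(z)\,dz .
\]
The function $F$ is positive and increasing. For $b<0$, increasing $J$ translates the window of integration to the right, so $J\mapsto \Phi_b(J)^{-1}$ is smooth and strictly increasing. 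Consequently $\Phi_b$ is smooth, strictly decreasing, and tends to $0$ as $J\to\infty$, since $F(z)\sim \sqrt{2\pi}\,e^{z^2/2}$. Existence and uniqueness of the fixed point $\overline N$ follow from the intermediate value theorem applied to $\Phi_b(J)-J$, which is strictly decreasing, positive at $0$, and tends to $-\infty$.

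For a decreasing map, the even and odd subsequences of $\Phi_b^n(N_{\rm ini})$ are monotone, and they lie on opposite sides of $\overline N$. The full sequence therefore converges either to $\overline N$ or to a $2$-cycle $(N^-,N^+)$ with $N^-<\overline N<N^+$, that is, to fixed points of $G=\Phi_b\circ\Phi_b$. Since $G$ is increasing, the whole question reduces to counting the fixed points of $G$ on $(0,\infty)$ and determining whether they attract.

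\emph{Case $b$ close to $0$.} In this regime $|\Phi_b'|$ is small uniformly, since $\Phi_b'(J)=b\,\Phi_b(J)^2\,(F(V_F-bJ)-F(V_R-bJ))$. In fact one needs only $|\Phi_b'|<1$ globally, which makes $\Phi_b$ a contraction, so every orbit converges to $\overline N$.

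\emph{Case $b$ very negative.} Here $\Phi_b'(\overline N)<-1$, so $\overline N$ is repelling for $G$. Combined with the bounds $\Phi_b(0)<\infty$ and $\Phi_b>0$, this forces $G$ to have fixed points outside $\overline N$.

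\textbf{Main obstacle.} Showing that the transition happens at a single $b^*$, and that for $b<b^*$ there is exactly one pair $N^-<\overline N<N^+$, is the hard step. It requires global uniqueness of the nontrivial fixed points of $G$, not just local analysis. I would first try a Schwarzian-derivative argument, proving $S\Phi_b<0$. Singer's theorem would then give that at most one attracting cycle exists and that it attracts all orbits except $\overline N$. With $S\Phi_b<0$, $G$ can have at most three fixed points, and they are $N^-,\overline N,N^+$ once $\overline N$ is repelling. Writing $\Phi_b=1/H$ with $H(J)=\int_{V_R-bJ}^{V_F-bJ}F$, the Schwarzian reduces to a sign condition involving $F$, $F'$, $F''$. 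This should follow from log-convexity-type properties of $F$, using the identity $F'=zF+1$.

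For monotonicity in $b$, I would define $b^*$ by $\Phi_b'(\overline N_b)=-1$. I would then show that $b\mapsto \Phi_b'(\overline N_b)$ is strictly monotone, using implicit differentiation of $\Phi_b(\overline N_b)=\overline N_b$. When $b^*\le b<0$ we have $|\Phi_b'(\overline N)|\le1$, and together with $S\Phi_b<0$ this gives global attraction of $\overline N$, including the marginal case $b=b^*$. When $b<b^*$ the fixed point is repelling and the unique $2$-cycle is globally attracting.
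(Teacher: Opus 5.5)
The paper gives no proof of this proposition; it is quoted from \cite{CaceresCanizo2024}, so your argument has to stand on its own.

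Your reduction is correct: the Fubini formula $\Phi_b(J)^{-1}=\int_{V_R-bJ}^{V_F-bJ}F$, strict monotonicity of $\Phi_b$, uniqueness of $\overline N$, and the passage to fixed points of the increasing map $G=\Phi_b\circ\Phi_b$. The strategy is also right. If $S\Phi_b<0$, then $(G')^{-1/2}$ is strictly convex, so $G$ has at most three fixed points. Three fixed points $N^-<\overline N<N^+$ then force $G'(\overline N)>1$, because $G'=1$ somewhere in $(N^-,\overline N)$ and somewhere in $(\overline N,N^+)$. This gives uniqueness of the cycle, global attraction, and the marginal case $b=b^*$, without Singer's theorem.

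The genuine gap is that $S\Phi_b<0$ is never proved, and it carries the entire global part of the statement. The mechanism you point to also goes the wrong way. Since $S(1/H)=SH$ and $H(J)=H_0(|b|J)$ with $H_0(y)=\int_{V_R+y}^{V_F+y}F$, the condition $S\Phi_b<0$ is equivalent to $3D'^2>2DD''$ for $D(y)=F(V_F+y)-F(V_R+y)$. Equivalently, $(\log D)''<\tfrac12\big((\log D)'\big)^2$. Log-convexity, of $F$ or of $D$, is a lower bound on second logarithmic derivatives and cannot produce this; you need an upper bound relative to the log-slope.

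This inequality is genuinely quantitative:
\begin{itemize}
\item For large $y$ it holds only because $(\log D)''\approx 1$ while $(\log D)'\approx V_F+y$ is large.
\item In the formal regime $V_R\to-\infty$, $V_F-bJ\to0$, one gets $3D'^2-2DD''\to 3-\pi<0$.
\end{itemize}
It appears to hold when $0<V_R<V_F$, but a proof must use that hypothesis explicitly. For instance, write $3D'^2-2DD''$ in terms of $p=V_R+y$, $q=V_F+y$, $F(p)$ and $F(q)$, using $F'=zF+1$ and $F''=(1+z^2)F+z$, and prove positivity for $0<p<q$. Until this is done, nothing rules out a $2$-cycle coexisting with a stable $\overline N$ for $b$ slightly above $b^*$, or several nested $2$-cycles for $b<b^*$. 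So neither the dichotomy at $b^*$ nor the global convergence claims follow.

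The other step you leave open, monotonicity of $b\mapsto\Phi_b'(\overline N_b)$, is where log-convexity really helps, and it needs no implicit differentiation. Put $y=|b|\,\overline N_b$. The fixed-point equation becomes $yH_0(y)=|b|$, so $y$ is a strictly increasing bijection of $|b|\in(0,\infty)$ onto $(0,\infty)$, and $\Phi_b'(\overline N_b)=-y\,(\log H_0)'(y)$. Next, $(\log F)''=1-z\lambda-\lambda^2$ with $\lambda=1/F$, and this is the variance of a standard Gaussian conditioned on $\{X<z\}$. Hence $F$ is log-convex, so $H_0$ is log-convex, and $y\mapsto y(\log H_0)'(y)$ increases strictly from $0$ to $+\infty$. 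This defines $b^*$ by $\Phi_{b^*}'(\overline N_{b^*})=-1$ and gives the monotonicity in $b$. It also makes your separate cases unnecessary: $b$ near $0$, and $b$ very negative, which you only asserted.
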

This proposition constitutes the core mechanism underlying the formal emergence of periodic solutions when $b < b^*$. 
In contrast to the above heuristic approach, our strategy consists in fixing a finite number of iterations $M \in \mathbb{N}$ and performing a rigorous analysis of the solution over the time interval $[0, Md]$. Within each block $[nd, (n+1)d]$ for $0 \le n < M$, we establish estimates that are uniform with respect to the delay $d$, although they may depend on the total number of iterations $M$. By rescaling time as $\tau = t/d$, we show that as $d \to +\infty$, and the number of iterations $M$ goes to $+\infty$, the solution $u_d$ and the flux $N_d$ jump from one to the next at each integer value of $\tau$. This allows us to recover the heuristic asymptotic behavior  in \cite{CaceresCanizo2024} by proving that the rescaled dynamics converge toward a step function governed by the discrete iteration of $\Phi_b$. This result is formalized in the following theorem:
\begin{theorem}[Convergence in the large-delay regime]
\label{th:cv_long_delay}
Assume
\begin{equation}\label{eq: condition V_R, V_F}
    0<V_R<V_F\quad \text{and}\quad b<0.
\end{equation}
Let $u^0 \in \mathcal{M}^+_v((-\infty,V_F]) \cap \mathcal{C}^1((-\infty,V_F])$,  and let $N_{\mathrm{ini}} \ge 0$ be initial data for Equation~\eqref{eq:FP delay}.
Let $u_d(t)$ denote the corresponding solution and define the associated
activity function by
\[
N_d(t) := -\partial_v u_d(t,V_F).
\]
Define the time-rescaled functions
\[
\tilde{u}_d(\tau) := u_d(d\tau),
\qquad
\tilde{N}_d(\tau) := N_d(d\tau).
\]
For $\tau \ge 0$, define the limit profiles
\[
\tilde{u}_\infty(v,\tau,N_{\mathrm{ini}})
=
\sum_{n=0}^{\infty}
\overline{u}\!\left(\Phi_b^{\,n}(N_{\mathrm{ini}}),v\right)
\,\mathbf{1}_{[n,n+1)}(\tau),
\]
\[
\tilde{N}_\infty(\tau,N_{\mathrm{ini}})
=
\sum_{n=0}^{\infty}
\Phi_b^{\,n}(N_{\mathrm{ini}})
\,\mathbf{1}_{[n,n+1)}(\tau),
\]
with the convention that $\Phi_b^{\,0}(N)=N$, so that the first
time-interval corresponds to the initial constant input. Then, for every $b \le 0$ and every $M \in \mathbb{N}$, the following convergences hold:
\begin{equation}
\label{eq:convergence_large_delay_u}
\lim_{d\to\infty}
\int_0^M
\big\|
\tilde{u}_d(\tau)
-
\tilde{u}_\infty(\tau,N_{\mathrm{ini}})
\big\|_{L^1_v}
\, d\tau
= 0,
\end{equation}
and
\begin{equation}
\label{eq:convergence_large_delay_N}
\lim_{d\to\infty}
\int_0^M
\big|
\tilde{N}_d(\tau)
-
\tilde{N}_\infty(\tau,N_{\mathrm{ini}})
\big|
\, d\tau
= 0.
\end{equation}
\end{theorem}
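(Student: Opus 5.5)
We argue by induction on the block index $n\in\{0,\dots,M-1\}$, working on each time block $[nd,(n+1)d]$ in the original variable $t$. On such a block the delayed input $bN_d(t-d)$ is a known function, namely the flux from the previous block. Equation~\eqref{eq:FP delay} is therefore a \emph{linear} Fokker--Planck equation with time-dependent drift $bI(t)-v$, where $I(t)=N_d(t-d)$.

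The inductive hypothesis at step $n$ will be
\[
\int_{(n-1)d}^{nd}|N_d(s)-\Phi_b^{\,n-1}(N_{\rm ini})|\,ds = o(d),
\]
together with uniform-in-$d$ bounds $\sup_{t\le Md}N_d(t)\le C_M$ and moment bounds on $u_d$. For $n=0$ the input is the constant $N_{\rm ini}$, so the hypothesis is exact. We will then show that the same relation holds with $n$ in place of $n-1$, i.e. $\int_{nd}^{(n+1)d}|N_d-\Phi_b^{\,n}(N_{\rm ini})|\,ds=o(d)$, and also $\int_{nd}^{(n+1)d}\|u_d(s)-\overline u_b(\Phi_b^{\,n}(N_{\rm ini}))\|_{L^1}\,ds=o(d)$. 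After the change of variables $\tau=t/d$, dividing by $d$ and summing over $n<M$ gives \eqref{eq:convergence_large_delay_u}--\eqref{eq:convergence_large_delay_N}.

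The core ingredient is a Doeblin--Harris estimate for the linear equation \eqref{eq:linearized} with a \emph{time-dependent} input $I(t)$ taking values in a compact set $[0,C_M]$. We first establish a minorization: there are $T_0>0$ and $\alpha\in(0,1)$, depending only on $C_M,V_R,V_F,b$, such that the evolution operator over any interval of length $T_0$ satisfies $S(t_0+T_0,t_0)u\ge\alpha\,\nu$ for some fixed probability measure $\nu$, uniformly in the input. This follows from Gaussian lower bounds for the killed Ornstein--Uhlenbeck process with bounded drift, together with the reset mechanism; it requires a Lyapunov function $1+v^2$ to control mass escaping to $-\infty$. Harris' theorem then gives contraction in a weighted total variation norm:
\[
\|S(t,s)u-S(t,s)w\|\le Ce^{-\lambda(t-s)}\|u-w\|.
\]

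We compare $u_d$ on the block with the solution $w$ of the equation driven by the constant input $J_n=\Phi_b^{\,n}(N_{\rm ini})$, started from the same datum $u_d(nd)$. Duhamel's formula expresses the difference as a sum of contracted source terms of the form $b\,(I(s)-J_n)\,\partial_v w(s)$. Using the contraction together with a bound on $\|\partial_v S\|$, e.g. parabolic smoothing $\lesssim (t-s)^{-1/2}$, we get
\[
\|u_d(t)-w(t)\|\lesssim \int_{nd}^t e^{-\lambda(t-s)}(t-s)^{-1/2}\,|I(s)-J_n|\,ds .
\]
Integrating in $t$ over the block converts the hypothesis $\|I-J_n\|_{L^1}=o(d)$ into an $o(d)$ bound. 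Since $\|w(t)-\overline u_b(J_n)\|\le Ce^{-\lambda(t-nd)}$, the $L^1_v$ statement follows. We then recover the flux: on the block, $N_d(t)=\Phi_b(I)$-like quantities are obtained from the mass balance, $N_d$ being the reinjection rate. We control $\int|N_d-\Phi_b(J_n)|$ by testing the equation against a smooth function $\psi$ with $\psi(V_F)=0$ and $\psi$ jumping appropriately at $V_R$. Alternatively, we use a regularizing estimate in which the flux depends continuously on $u$ in a $W^{1,1}$ norm near $V_F$ after a time lag, which adds $O(1)$ error per block (negligible after division by $d$). Since $\Phi_b(J_n)=J_{n+1}$, this closes the induction.

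The main obstacle is the uniform control of the flux $N_d$: both the a priori bound $\sup_{t\le Md} N_d\le C_M$ and the passage from $L^1_v$ convergence of $u_d$ to $L^1_t$ convergence of $N_d=-\partial_v u_d(V_F)$, which is a boundary derivative. For the bound, the inhibitory sign $b<0$ helps: the drift $bI-v$ only pushes neurons away from $V_F$, so we will compare with the $b=0$ case via a supersolution or barrier argument near $V_F$ to bound $N_d$ by a constant depending on the initial datum ($C^1$ regularity gives $N_d$ bounded near $t=0$). Here we will try first to adapt the uniform flux estimates of \cite{CGGS,zbMATH08113917} for inhibitory networks. For the flux convergence, we plan to use local parabolic regularity near $V_F$ on time windows $[t-1,t]$ with bounded drift: this gives $|N_d(t)-\Phi_b(J_n)|\lesssim \sup_{s\in[t-1,t]}\|u_d(s)-\overline u_b(J_n)\|_{L^1}+\sup_{[t-1,t]}|I-J_n|$, after which a careful treatment of the $L^1$-in-time averaging of the second term is needed.
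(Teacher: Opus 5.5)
\textbf{Overall.} Your architecture is the paper's: a Doeblin--Harris contraction for the linear equation with a time-dependent input bounded uniformly in $d$, a Duhamel comparison around the pseudo-equilibrium, a supersolution bound on the flux, and an induction over blocks. Your $o(d)$ inductive hypothesis is weaker than the paper's exponentially weighted bounds $C_k(\alpha),D_k(\alpha)$, and it is enough for the density.

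\textbf{The gap: passing from $u_d$ to $N_d$.} Testing the equation against a fixed $\psi$ gives an identity that is \emph{linear} in $N_d$. Integrating over a block therefore controls $\int (N_d-\Phi_b(J_n))\,dt$, but not $\int |N_d-\Phi_b(J_n)|\,dt$: oscillations of $N_d$ are invisible. Your regularity alternative bounds $|N_d(t)-\Phi_b(J_n)|$ by $\sup_{[t-1,t]}|I-J_n|$, which your hypothesis does not control. An input with sparse thin spikes has $\int_{\mathrm{block}}|I-J_n|=o(d)$, while $\int_{\mathrm{block}}\sup_{[t-1,t]}|I-J_n|\,dt$ is of order $d$. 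Closing that route would require boundary $L^p$-in-time estimates plus H\"older.

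The paper needs no regularity here. Set $w=u_d-\overline u_b(J_n)$ and apply the moment $(v-V_R)^2$ to $|w|$ via Kato's inequality. The weight vanishes at $V_R$, so the reinjection term $N_w\delta_{V_R}$ disappears. Since $w(t,V_F)=0$, one has $\partial_v|w|(t,V_F)=-|N_w(t)|$, so the boundary term has the good sign:
\[
\frac{d}{dt}\int_{-\infty}^{V_F}(v-V_R)^2|w|\,dv+(V_F-V_R)^2\,|N_d(t)-\Phi_b(J_n)|\le c\,\|w(t)\|_{L^1_v}+K\,|I(t)-J_n| .
\]
Integrate over the block. The moment at the block start is $O(1)$, and the right-hand side is $o(d)$ by what you already proved. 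This is Step 2 of Theorem~\ref{th:convergence exp u et N}, and it works just as well without the exponential weight.

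\textbf{The uniform flux bound is only sketched.} What is missing is exactly where $0<V_R$ enters. The $b=0$-type profile $Q$ is a supersolution only on $[0,V_F]$: for $v<0$, where $Q$ is constant, the leak term gives $-\partial_v(vQ)=-Q<0$. So a barrier near $V_F$ requires controlling $u_d$ on the inner edge uniformly in time. A bound that is merely local in time is useless, since blocks have length $d\to\infty$.

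The paper gets this control in two pieces:
\begin{itemize}
\item For $t\ge\log 2$, it uses an interior $L^1\to L^\infty$ heat-equation estimate (after the change of variables) on parabolic cylinders avoiding the reset line, using only unit mass. This is possible because $v\le 0$ stays at distance $V_R$ from the reset point.
\item The initial layer is handled by the global supersolution $P\sim e^t$, which forbids contact with $\beta_0 Q$ before $t=\log 2$.
\end{itemize}
Then $u_d\le\beta_1 Q$ gives $N_d\le\beta_1$ by l'H\^opital at $V_F$. This is propagated blockwise, with constants depending on the previous block's bound but not on $d$.

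\textbf{Two smaller points.}
\begin{itemize}
\item Comparing with $w$ started from $u_d(nd)$ forces a gradient-smoothing estimate for the evolution operator with the Dirichlet-plus-reinjection boundary condition, which you do not prove. Comparing directly with $\overline u_b(J_n)$, as the paper does, leaves the fixed mean-zero source $-b(I-J_n)\partial_v\overline u_b(J_n)\in L^1_v$ and avoids that estimate.
\item Your indices do not match. The input on block $n$ is close to $\Phi_b^n(N_{\mathrm{ini}})$ (exactly $N_{\mathrm{ini}}$ for $n=0$). So on that block $u_d\approx\overline u_b(\Phi_b^n(N_{\mathrm{ini}}))$ and $N_d\approx\Phi_b^{n+1}(N_{\mathrm{ini}})$, not $\Phi_b^{n-1}$ and $\Phi_b^n$. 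The statement's $\tilde N_\infty$ is likewise shifted by one relative to $\tilde u_\infty$, and the paper's induction has the same slip. This is bookkeeping, not a flaw in your method.
\end{itemize}
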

Combining Theorem~\ref{th:cv_long_delay} with Proposition~\ref{prop:phi_decreasing}, we deduce the following corollary. It provides a criterion ensuring that the solution, in the limit $n \to +\infty$, approaches a periodic solution whenever $b < b^*$, where $b^*$ is the critical value defined in Proposition \ref{prop:phi_decreasing}.
\begin{corollary}[Long-time dynamics in the large-delay regime]
\label{cor:double_limit_correct}
Let $b<0$ and let $(\tilde N_d)$ be as in
Theorem~\ref{th:cv_long_delay}.
Then the following holds.
\begin{enumerate}
\item If $b^* \le b < 0$, let $\overline N$ denote the unique fixed point of $\Phi_b$.
Then
\[
\lim_{M\to\infty}
\;
\lim_{d\to\infty}
\int_M^{M+1}
\big|
\tilde N_d(\tau) - \overline N
\big|
\, d\tau
=0.
\]
\item If $b<b^*$ and $N_{\mathrm{ini}}\neq \overline N$,
let $(N^-,N^+)$ denote the attracting $2$-cycle of $\Phi_b$.
Define
\[
N_{\mathrm{per}}(\tau)
:=
\lim_{n\to\infty}
\tilde N_\infty(\tau+2n,N_{\mathrm{ini}}),
\]
which is a $2$-periodic, piecewise constant function (depending on $N_{\mathrm{ini}}$) taking the values
$N^-$ and $N^+$.
Then
\[
\lim_{M\to\infty}
\;
\lim_{d\to\infty}
\int_M^{M+2}
\big|
\tilde N_d(\tau) - N_{\mathrm{per}}(\tau)
\big|
\, d\tau
=0.
\]
\end{enumerate}
\end{corollary}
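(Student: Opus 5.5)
The plan is to obtain the corollary by combining Theorem~\ref{th:cv_long_delay}, applied on $[0,M+2]$, with the asymptotics of the iterates in Proposition~\ref{prop:phi_decreasing}. For fixed $M$, apply the theorem with $M+2$ in place of $M$. Since the integrand is nonnegative and $[M,M+2]\subset[0,M+2]$,
\[
\lim_{d\to\infty}\int_M^{M+2}\big|\tilde N_d(\tau)-\tilde N_\infty(\tau,N_{\mathrm{ini}})\big|\,d\tau=0 .
\]
The same holds on $[M,M+1]$. By the triangle inequality, for any limit profile $L$ (either $\overline N$ or $N_{\mathrm{per}}$),
\[
\limsup_{d\to\infty}\int_M^{M+2}|\tilde N_d-L|\,d\tau\le \int_M^{M+2}|\tilde N_\infty(\tau,N_{\mathrm{ini}})-L(\tau)|\,d\tau .
\]
The right-hand side no longer depends on $d$, so the $d$-limit is removed and it only remains to let $M\to\infty$ in a purely discrete quantity.

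In case 1 ($b^*\le b<0$), $\tilde N_\infty$ is piecewise constant, equal to $\Phi_b^{n}(N_{\mathrm{ini}})$ on $[n,n+1)$. Hence $\int_M^{M+1}|\tilde N_\infty-\overline N|\,d\tau=|\Phi_b^M(N_{\mathrm{ini}})-\overline N|$ for integer $M$. This tends to $0$ by Proposition~\ref{prop:phi_decreasing}(1). (If $M$ is allowed to be real, the integral is bounded by the sum of two consecutive terms, which also tend to $0$.)

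In case 2 ($b<b^*$, $N_{\mathrm{ini}}\ne\overline N$), Proposition~\ref{prop:phi_decreasing}(2) gives convergence to the $2$-cycle. First I will make precise that the even iterates $\Phi_b^{2n}(N_{\mathrm{ini}})$ converge to one point $a\in\{N^-,N^+\}$ and the odd iterates to the other point $c=\Phi_b(a)$. This uses the continuity of $\Phi_b$: once the even subsequence approaches one point of the cycle, the odd subsequence approaches its image. The "convergence to the 2-cycle" could a priori be read only as the set of limit points being $\{N^-,N^+\}$. However, $\Phi_b$ is decreasing and $\Phi_b^2$ is increasing, so the even subsequence is monotone. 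It is also bounded (the iterates lie in $[0,\Phi_b(0)]$ after one step), hence convergent, and its limit is a fixed point of $\Phi_b^2$ distinct from $\overline N$; I will use this to settle the question. Consequently $N_{\mathrm{per}}(\tau)=\lim_n\tilde N_\infty(\tau+2n)$ is well defined: it equals $a$ on $[2k,2k+1)$ and $c$ on $[2k+1,2k+2)$. Then, for integer $M$,
\[
\int_M^{M+2}|\tilde N_\infty-N_{\mathrm{per}}|\,d\tau=|\Phi_b^M(N_{\mathrm{ini}})-N_{\mathrm{per}}(M)|+|\Phi_b^{M+1}(N_{\mathrm{ini}})-N_{\mathrm{per}}(M+1)|\longrightarrow0 .
\]

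The main, if mild, obstacle is exactly this identification of the parity-wise limits, together with the well-posedness of $N_{\mathrm{per}}$. Everything else is the triangle inequality combined with Theorem~\ref{th:cv_long_delay} on the enlarged interval $[0,M+2]$.
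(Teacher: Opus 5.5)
Your proposal is correct and takes the same route as the paper, which simply asserts that the corollary follows by combining Theorem~\ref{th:cv_long_delay} with Proposition~\ref{prop:phi_decreasing}; you fill in the omitted details, namely restricting the theorem to $[M,M+2]$, the triangle inequality, and the parity-wise convergence of $\Phi_b^n(N_{\mathrm{ini}})$ that makes $N_{\mathrm{per}}$ well defined. One point to tighten: the iterated limit needs the inner $d$-limit to exist, not just a $\limsup$ bound, and the reverse triangle inequality gives this at once, so $\lim_{d\to\infty}\int_M^{M+2}|\tilde N_d-L|\,d\tau$ exists and equals $\int_M^{M+2}|\tilde N_\infty(\tau,N_{\mathrm{ini}})-L(\tau)|\,d\tau$.
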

The primary difficulty in proving Theorem \ref{th:cv_long_delay} arises in the subsequent iterations. Starting from the interval $[d, 2d]$, the delayed neuronal flux $N(t-d)$ is no longer constant, hence one cannot directly apply the standard proof of exponential convergence toward the stationary state for the model \eqref{eq:linearized}. However, we may expect that it quickly approaches the value $\Phi_b(J)$ exponentially fast in the sense of Theorem \ref{th:convergence exp u et N}. Heuristically, after a phase during which the input relaxes toward a constant value, the system is effectively governed by a nearly constant input. In other words, provided that $d$ is sufficiently large, the solution has enough time to relax almost to the stationary state associated with a fixed flux.
To make these heuristic arguments rigorous, we decompose the proof into three distinct parts. First, in the spirit of what was done in \cite{Ambrogi_2026}, we generalize the convergence theory by extending the proof of convergence toward a stationary state, initially established for a constant input $J$, to the case where the external input $J(t) \in L^\infty(0, +\infty)$ is time-dependent (see Theorem \ref{th:convergence doeblin-harris}). In this non-autonomous setting, one cannot expect convergence toward a fixed stationary state in general, but we show instead that any solution converges toward a common profile governed by the asymptotic dynamics of $J(t)$. Specifically, Theorem \ref{th:convergence exp u et N} establishes that if $J(t)$ converges sufficiently fast toward a constant $J$, then the solution $u(t), N(t)$ converges toward the pseudo-equilibrium associated with $J$, which is $\overline{u}(J), \Phi_1(J)$.
Second, to apply the previous results to our specific context, we establish $L^\infty$ estimates on the neuronal flux $N(t)$ that are uniform with respect to $d$, although they may depend on the iteration index $n$ on each interval $[nd, (n+1)d]$. At this stage of the proof, we require the initial data to be sufficiently regular and assumption \eqref{eq: condition V_R, V_F} to allow the construction of supersolutions as in \cite{CPSS}, which provides the necessary control on the flux $N(t)$ independently of $d$. Such method was also used in the two-dimensional case with partial diffusion (with an adaptation current) in \cite{Ambrogi_2026} and seems to be, up to our knowledge, the only available tool to get $L^\infty$ bound on the activity function $N(t)$. It should be noted that the constraints on $V_R$ and $V_F$, as well as the regularity requirements on the initial data, appear to be technical conditions inherent to our proof strategy rather than necessary requirements for the result to hold.  Finally, the last part of the proof consists in applying these results through a recursive procedure over the successive intervals to conclude the proof of Theorem \ref{th:cv_long_delay}. 

\vspace{0.5cm}

\noindent The remainder of this article is organized as follows. In Section \ref{sec:convinput}, following the approach of \cite{zbMATH07903423}, we establish a general result regarding exponential stability toward a unique profile for a non-autonomous equation with a given bounded input (see Theorem \ref{th:convergence doeblin-harris}). In section \ref{sec:lienu}, we subsequently apply this result to prove that if the input $J(t)$ converges exponentially fast toward a constant value $\bar{J}$ (up to time averaging), then the unique asymptotic profile for $J(t)$ coincides with that of the constant input $\bar{J}$ (see Theorem \ref{th:convergence exp u et N}). In Section \ref{sec:bornesunif}, in order to apply the theory developed in Section \ref{sec:convinput} to equation \eqref{eq:FP delay}, we use the method of supersolutions to show that, provided the initial data is sufficiently regular and $0 < V_R < V_F$, the neuronal flux remains bounded on each interval $[kd, (k+1)d]$ (see Theorem \ref{th:bound on N_u(t)}). Section \ref{sec:preuvethm} is devoted to the proof of Theorem \ref{th:cv_long_delay}, which relies on applying the results of the two previous sections via a recursive procedure. Furthermore, we provide numerical simulations to illustrate the convergence behavior established in Theorem \ref{th:cv_long_delay}. Finally, the last section is devoted to concluding remarks and perspectives for future work.


\section{Exponential stability non-autonomous system with bounded inputs}\label{sec:convinput}
Let $J $ be a given input defined on $\R^+$ satisfying the following assumption
\begin{equation}\label{eq:propriété J}
    J(t) \geq 0 \quad \text{and} \quad \sup_{t\geq 0} J(t) \coloneqq J_\infty < \infty.
\end{equation}
We consider the associated  linear Fokker-Planck equation given by
\begin{equation}\label{eq:FP linéaire instat}
\begin{cases}
    \displaystyle \frac{\partial u}{\partial t}(t,v) - \frac{\partial}{\partial v}\big((J(t) + v)u(t,v)\big) - \frac{\partial^2 u}{\partial v^2}(t,v) = \delta_{V_R}(v)N_u(t), & t \geq 0, \, v \leq V_F, \\[8pt]
    u(t,V_F) = 0, \\[4pt]
    \displaystyle N_u(t) = -\frac{\partial u}{\partial v}(t,V_F)\\
    \displaystyle u(0,v) = u^0(v).
\end{cases}
\end{equation}

\noindent  Let $(\Gamma(t,s))_{0 \leq s \leq t}$ be the evolution operator associated with the non-autonomous Fokker-Planck equation \eqref{eq:FP linéaire instat}. For any initial condition $u(s, \cdot) = u^s$ at time $s \geq 0$, the solution at time $t \geq s$ is given by $u(t, \cdot) = \Gamma(t, s) u^s$. This operator maps the initial distribution at time $s$ to the distribution at time $t$, effectively accounting for the influence of the input $J$ restricted to the interval $[s, t]$. The following Theorem holds

\begin{theorem}\label{th:convergence doeblin-harris}
    Suppose that $J(t)$ satisfies \eqref{eq:propriété J}. Then, there exist constants $\alpha > 0$ and $M > 0$, depending only on $J_\infty$ such that for all $t \geq s \geq 0$ and for all $f \in L^1_v((-\infty, V_F))$ satisfying $\int_{-\infty}^{V_F}f(v)\,dv = 0$, the following estimate holds
    \[
        \|\Gamma(t, s) f\|_{L^1_v} \leq M e^{-\alpha(t - s)} \|f\|_{L^1_v}
    \]
with
\[
\|f\|_{L^1_v} = \int_{-\infty}^{V_F}\big(1+v^2\big)|f(v)|dv.
\]
\end{theorem}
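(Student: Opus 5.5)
The plan is to use a non-autonomous version of the Harris theorem (Doeblin–Harris), as in \cite{zbMATH07903423}. The evolution operators $\Gamma(t,s)$ are positive, mass-preserving and satisfy $\Gamma(t,r)\Gamma(r,s)=\Gamma(t,s)$. Harris' argument only needs two ingredients, both uniform in the starting time $s$ and in the particular input $J$ with $0\le J\le J_\infty$:
\begin{itemize}
\item a Lyapunov (moment) inequality for the weight $V(v)=1+v^2$;
\item a Doeblin minorization on a sublevel set $\{V\le R\}$ over a fixed time step $T$.
\end{itemize}
Once both hold with constants depending only on $J_\infty$, the abstract Harris theorem gives, for some $\beta>0$ and $\rho\in(0,1)$,
\[
\|\Gamma(s+T,s)f\|_\beta\le \rho\|f\|_\beta
\]
for every zero-mass $f$, where $\|f\|_\beta=\int(1+\beta V)|f|$ is equivalent to $\|\cdot\|_{L^1_v}$. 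Iterating over the blocks $[s+kT,s+(k+1)T]$ gives the claimed estimate, with $\alpha=-\log\rho/T$. The remainder $t-s-\lfloor (t-s)/T\rfloor T<T$ is absorbed into the constant $M$, using the bound $\|\Gamma(t,r)\|_{L^1_v\to L^1_v}\le C$ for $t-r\le T$. That bound also follows from the Lyapunov estimate.

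\textbf{Step 1 (Lyapunov).} Take a nonnegative solution, multiply by $v^2$ and integrate by parts. The boundary term at $V_F$ vanishes since $u(V_F)=0$. The reset term contributes $V_R^2N$, and the outgoing flux removes $V_F^2N$. This gives
\[
\frac{d}{dt}\int v^2u=-2\int v^2u-2J(t)\int vu+2\int u+(V_R^2-V_F^2)N(t).
\]
If $V_R^2\le V_F^2$, the last term is nonpositive. Otherwise we bound it by $|V_R^2-V_F^2|\,N$ and control $\int_s^t N$ by the first moment:
\[
\frac{d}{dt}\int vu=-\int(J+v)u+(V_R-V_F)N,
\]
so $\int N$ is bounded by moments. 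Bounding $|J\int vu|\le J_\infty(1+\int v^2u)/2$ and absorbing, we get, in an integrated-in-time form,
\[
\int V\,\Gamma(s+T,s)\mu\le \gamma\int V\mu+K
\]
with $\gamma<1$, $K$ depending only on $J_\infty$, for $T$ chosen large enough.

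\textbf{Step 2 (Doeblin minorization).} This is the main obstacle. We need a probability $\nu$ and $\eta>0$ such that
\[
\Gamma(s+T,s)\mu\ge \eta\,\nu
\]
for every probability $\mu$ supported in $\{|v|\le R\}$, uniformly in $s$ and in $J$. I would first drop the reset source, which is nonnegative, so it only helps the lower bound. Then $\Gamma(t,s)\mu$ dominates the solution of the killed Ornstein–Uhlenbeck problem on $(-\infty,V_F)$ with drift $-(J(t)+v)$ and Dirichlet condition at $V_F$. Writing this as a diffusion $dX=-(J(t)+X)dt+\sqrt2\,dW$ killed at $V_F$, I would compare it with a Gaussian heat kernel: Girsanov with the bounded drift on $[s,s+T]$ for $|X|\le R'$, or a parabolic Harnack inequality on a compact $[-R',V_F-\varepsilon]$. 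Either route gives a density lower bound $\ge c\,\mathbf 1_{[-1,0]}(v)$ for starting points in $[-R,R]$ with $R<V_F$ after shifting. The constant $c$ depends only on $R,T,J_\infty$, since the drift coefficients are bounded on the compact set uniformly in $J$. Starting points near $V_F$ need care: there the killing probability is high, but the reset mass is reinjected at $V_R$. So I would split $[s,s+T]$ into two halves. In the first half, either mass stays away from $V_F$ or it has been reset to $V_R$; in the second half we apply the interior minorization from $V_R$ or from the compact set.

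\textbf{Step 3 (conclusion).} Apply Harris' theorem with $\mu_1-\mu_2$ decomposition: it needs exactly Steps 1 and 2 at the fixed step $T$. It also uses the contraction in total variation $\|\Gamma f\|_{L^1}\le\|f\|_{L^1}$, which comes from positivity and mass conservation. Uniformity of all constants in $s$ and $J$ lets us iterate despite the non-autonomy. A general zero-mass $f\in L^1_v$ is handled by writing $f=f_+-f_-$, which have equal masses, and normalizing.
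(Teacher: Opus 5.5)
Your overall architecture is the paper's: a moment Lyapunov bound, a Doeblin minorization at a fixed step, the Hairer--Mattingly weighted norm, and iteration using uniformity in $s$ and $J$. However, two steps fail as written. In Step 1, the bound $|2J\int vu|\le J_\infty(1+\int v^2u)$ only yields $\frac{d}{dt}\int v^2u\le-(2-J_\infty)\int v^2u+C$. This gives no decay once $J_\infty\ge 2$, and the theorem is later applied with $J_\infty=|b|A_{k-1}$, which is typically large. The fix is a weighted Young inequality such as $2J|v|\le v^2+J^2$, which gives $\frac{d}{dt}\int v^2u\le-\int v^2u+2+J_\infty^2$ plus the flux term. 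The paper's weight $(v-V_R)^2$ is cleaner still. With it the reset contributes nothing and the outflow contributes exactly $-(V_F-V_R)^2N\le0$, so you never need to control $\int N$ through the first moment.

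The real gap is the near-threshold part of Step 2, which is exactly the hard point. Sublevel sets of $1+v^2$ contain a neighbourhood of $V_F$. So you need, uniformly over starting points up to $V_F$, a fixed fraction of mass in a compact $[-A,V_F-\varepsilon]$ at a fixed time. Your dichotomy (``either mass stays away from $V_F$ or it has been reset to $V_R$'') does not deliver this. Unreset mass can sit arbitrarily close to $V_F$ at time $s+T/2$, and reset mass left $V_R$ at a random earlier time. The paper closes this step with the functional $X(t)=\int(V_F-v)u\,dv$. It satisfies $X'=V_F+J-X+(V_F-V_R)N\ge V_F-X$, using $J\ge0$, $N\ge0$ and $V_F>0$. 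Hence $X\ge V_F/2$ for $t\ge\log 2$, whatever the initial datum. Together with the second-moment bound this forces $\int_{-A}^{V_F-\varepsilon}u\ge\eta$. After that, only the interior minorization is needed (a Dirichlet heat-kernel lower bound after a change of variables). Your probabilistic route can also be repaired. For a start within $\varepsilon_0$ of $V_F$, keep only the event of hitting $V_F$ before time $s+\delta$; its probability is $\ge p_0>0$ uniformly in $J\in[0,J_\infty]$. Apply the strong Markov property at the reset time. Then use a tube estimate from $V_R$ that is uniform over remaining times in $[T/2-\delta,T/2]$. This route requires identifying the PDE solution with the law of the reset diffusion, which you should state explicitly.
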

The proof of Theorem \ref{th:convergence doeblin-harris} relies on the Doeblin-Harris method (see for instance \cite{arXiv:2601.19282}) and is structured into three main steps.  First, we show that the linear drift $v \mapsto -(v + J(t))$ ensures that, if the initial data $u_0 \in \mathcal{M}^+_v((-\infty, V_F))$ has its first two moments bounded by a constant $C > 0$, the probability mass concentrates within a compact set $K \subset (-\infty, V_F)$ after a sufficiently long time $T$, where both $K$ and $T$ depend only on $C$.  Second, by employing a change of variables, we leverage the regularizing properties of the diffusion term to derive a minorization condition. This provides, for all $t \geq T$, a uniform pointwise lower bound for any solution $u(t) = \Gamma(t, s)u(s)$  on a compact subset of $(-\infty, V_F)$.  Finally, to conclude the Doeblin-Harris argument, we distinguish two complementary steps. First, by applying the minorization condition under the constraint that the first two moments of the initial data are bounded, we obtain a contraction for the evolution operator $\Gamma(t,s)$ with bounded moments. 
Second, to extend the contraction to a norm controlling the first two moments for general initial data, we introduce the second moment as a Lyapunov function
\[
\widetilde{M}(t) = \int_{-\infty}^{V_F} v^2 |f(t,v)| \, dv,
\]
where $\int f(t,v)dv=0$, and derive to obtain
\[
\widetilde{M}'(t) \leq -\alpha \widetilde{M}(t) + C \|f(t)\|_{L^1}.
\]
This ensures the exponential decay of the second moment, which allows us to conclude the contraction for general initial data in the corresponding weighted norm, thereby completing the proof.

\subsection{Localization of Mass on a compact of $(-\infty,V_F[$}

 For a given initial datum $u^0 \in \mathcal{M}^+_v((-\infty, V_F))$ and an initial time $s \geq 0$, we consider the unique solution $(u(t, \cdot))_{t \geq s}$ starting from $u^0$. 

\begin{proposition}\label{prop:Localisation of Mass}[Localization of Mass]\\
For any $C>0$, there exist constants \( \eta, A,\varepsilon>0\) and $t_C\geq 1$ only depending on $J_\infty$ and $C$ such that for every \( u^0 \in \mathcal{M}^+_v((-\infty, V_F)) \) with \( M_s(0) \leq C \), the following estimate holds
\[
\forall t \geq t_C, \, s \geq 0, \quad \int_{-A}^{V_F-\varepsilon} \Gamma(t+s, s) u^0(v) \, dv \geq \eta.
\]
\end{proposition}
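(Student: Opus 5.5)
We aim to show two facts: the first two moments of $\Gamma(t+s,s)u^0$ stay uniformly bounded, and only a small fraction of mass can sit in a thin layer $(V_F-\varepsilon,V_F)$ near the absorbing boundary. Here $M_s(0)$ denotes the first and second moments of $u^0$. Combining these facts with Chebyshev's inequality will give a lower bound on the mass in $[-A,V_F-\varepsilon]$.

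\textbf{Step 1 (moment bounds).} Write $u(t)=\Gamma(t,s)u^0$. Test the equation against $\varphi(v)=v$ and $\varphi(v)=v^2$. Since $u(t,V_F)=0$ and $N_u=-\partial_v u(V_F)$, integrating by parts gives
\[
\frac{d}{dt}\int v\,u = -\int (J+v)u + N_u(V_R-V_F),
\]
\[
\frac{d}{dt}\int v^2 u = -2\int v(J+v)u + 2 + N_u(V_R^2-V_F^2).
\]
The first identity gives $N_u\le \big(J_\infty+\int|v|u-\tfrac{d}{dt}\int vu\big)/(V_F-V_R)$. The $v^2$ equation has dissipation $-2\int v^2u$ and a source bounded by $2+2J_\infty\int|v|u+C N_u$.

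The only delicate term is $N_u$. To handle it, I would work with a suitable combination $\int (v^2-\lambda v)u$, chosen so that the boundary contributions of $N_u$ cancel or become nonpositive. It suffices to pick $\lambda$ with $\lambda(V_F-V_R)\ge V_F^2-V_R^2$, i.e. $\lambda=V_F+V_R$, which makes $\varphi(V_R)=\varphi(V_F)$. This is the standard trick of choosing $\varphi$ with $\varphi(V_F)=\varphi(V_R)$, so that the reset term disappears. Take $\varphi(v)=(v-V_R)(v-V_F)$. Then $\frac{d}{dt}\int\varphi u=\int(-(J+v)\varphi'+\varphi'')u$, with no $N_u$ term. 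The integrand behaves like $-2v^2+O(|v|)$, so $\frac{d}{dt}\int\varphi u\le -\int\varphi u + C(J_\infty)$, where we use $\int u=1$. Hence $\int\varphi u(t)\le \max(C',C(J_\infty))$. Since $\varphi\ge v^2/2-C$, this bounds the second moment uniformly in $t\ge s$, and after a time depending on $C$ the bound depends only on $J_\infty$. Chebyshev then gives $A$ with $\int_{-\infty}^{-A}u(t)\le 1/4$.

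\textbf{Step 2 (boundary layer).} I expect this to be the main obstacle: we must rule out the scenario where all mass concentrates near $V_F$. I would use the time-averaged flux together with a test function that vanishes at $V_F$. Take $\psi(v)=\min\big(1,(V_F-v)/\varepsilon\big)$, or a smooth version of it. Testing the equation gives $\frac{d}{dt}\int\psi u$ in terms of $N_u(\psi(V_R)-0)$, which is positive, plus drift and diffusion terms. The diffusion term produces $\int\psi''u$; with a concave smoothing this is negative only near the kink, and the boundary term at $V_F$ yields $-N_u\psi'(V_F)\cdot$ sign. This pointwise route gets messy.

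A cleaner alternative, which I would try first, is a comparison/Duhamel argument. The probability of staying in $(V_F-\varepsilon,V_F)$ during a time interval of length $1$ without hitting $V_F$ is small for a Brownian motion with bounded drift ($|J+v|\le J_\infty+|V_F|+1$). Moreover, particles leaving that layer downward spend a positive fraction of time below $V_F-\varepsilon$, and particles hitting $V_F$ are reset to $V_R<V_F-\varepsilon$ and remain below $V_F-\varepsilon$ for a time of order one with positive probability. Using the stochastic representation of the linear non-autonomous equation (valid since $J$ is a given bounded input), I would show that $\int_{V_F-\varepsilon}^{V_F}u(t)\le 1/4$ for $t\ge t_C$, choosing $\varepsilon$ small depending only on $J_\infty$.

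\textbf{Step 3 (conclusion).} Combining Steps 1 and 2, for $t\ge t_C$ we get $\int_{-A}^{V_F-\varepsilon}u\ge 1-\tfrac14-\tfrac14=\tfrac12=:\eta$. All constants depend only on $C$ and $J_\infty$ and not on $s$, since the bounds in each step are uniform over inputs with $0\le J\le J_\infty$. We choose $t_C\ge 1$ so that the memory of the initial moment bound has decayed and the one-unit-time boundary-layer argument applies.
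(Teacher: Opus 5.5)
Your Step 1 is sound. Testing against $(v-V_R)(v-V_F)$ removes the flux term and gives a uniform second-moment bound. It plays the role of the paper's estimate on $\int (v-V_R)^2u$, where the flux term $-(V_F-V_R)^2N_u$ simply has a good sign. (Minor slip: the $N_u$-coefficient in $\frac{d}{dt}\int (v^2-\lambda v)u$ is $(V_F-V_R)(\lambda-V_F-V_R)N_u$, so the condition is $\lambda\le V_F+V_R$. Under $0<V_R<V_F$ even $\lambda=0$ works.)

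The genuine gap is Step 2, which carries the whole difficulty of the proposition and is not proved. Your stochastic sketch argues about occupation times (``spend a positive fraction of time below $V_F-\varepsilon$'', ``remain below $V_F-\varepsilon$ for a time of order one with positive probability''). What you need is a bound at each fixed time, $\int_{V_F-\varepsilon}^{V_F}u(t,v)\,dv\le \frac{1}{4}$ for all $t\ge t_C$. Positive-probability statements cannot produce the quantitative constant $\frac{1}{4}$. A fixed-time bound would require:
\begin{itemize}
\item a uniform estimate on the density of $u(t)$ near $V_F$ for $t\ge 1$, i.e.\ a Dirichlet heat-kernel bound for the killed process, uniform in the starting point and in $0\le J\le J_\infty$;
\item control of the reinjected mass $\int_{t-1}^{t}N_u$;
\item a rigorous link between the PDE and the reset process.
\end{itemize}
None of this is carried out.

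The missing idea is that you never need the layer to carry little \emph{mass}; it is enough that it carries little of a suitable first moment. Test the equation against the untruncated function $V_F-v$ (your $\psi$ without the cap, so $\psi''=0$ and nothing is messy). For $X(t)=\int_{-\infty}^{V_F}(V_F-v)u(t,v)\,dv$ one gets
\[
X'(t)=V_F+J(t)-X(t)+(V_F-V_R)N_u(t)\ \ge\ V_F-X(t),
\]
since $J\ge0$ and $N_u\ge0$ (resetting moves mass away from $V_F$). Hence $X(t)\ge V_F/2$ for $t\ge\log 2$, because $V_F>0$. Now split $X(t)$ over $(-\infty,-A)$, $(-A,V_F-\varepsilon)$ and $(V_F-\varepsilon,V_F)$:
\begin{itemize}
\item the first piece is at most $\frac{V_F+A}{(A+V_R)^2}\int(v-V_R)^2u(t)$, which Step 1 controls;
\item the middle is at most $(V_F+A)\int_{-A}^{V_F-\varepsilon}u(t)$;
\item the last is at most $\varepsilon$, because $V_F-v\le\varepsilon$ there and the total mass is $1$.
\end{itemize}
Taking $A$ large and $\varepsilon\le V_F/8$ yields $\int_{-A}^{V_F-\varepsilon}u(t)\ge \frac{V_F}{8(V_F+A)}=:\eta$. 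This $\eta$ is small rather than $\frac{1}{2}$. Your Step 3 aims at more than the proposition requires, and that extra strength is exactly what pushed you into the hard density estimate.
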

\begin{proof}

First, we observe that it suffices to treat the case $s=0$, provided that every estimate depends on $t \mapsto J(t)$ only through its upper bound $J_\infty$.
Let $u(t) = \Gamma(t,0)u^0$ and consider 
\begin{equation}
    M(t) = \int_{-\infty}^{V_F} (v -V_R)^2 u(t,v) \, dv \quad \text{and} \quad X(t) = \int_{-\infty}^{V_F}(V_F-v)u(t,v) \, dv.
\end{equation}
Then, assuming that $M(0) \leq C$, it is sufficient to show that there exist $t_C > 0$ and $D', c > 0$ such that for all $u^0 \in \mathcal{M}^+_v((-\infty, V_F))$
\begin{equation}\label{eq:estimation_M_X}
    \forall t \geq t_C, \quad M(t) \leq D' \quad \text{and} \quad \forall t \geq \log(2), \quad X(t) \geq c.
\end{equation}
Indeed, if \eqref{eq:estimation_M_X} holds, then for any $t \geq \max(\log(2), t_C)$ and any $\varepsilon > 0, A > 0$, we have
\begin{align*}
    c \leq \int_{-\infty}^{V_F}(V_F-v)u(t,v) \, dv &= \int_{-\infty}^{-A}(V_F-v)u(t,v) \, dv + \int_{-A}^{V_F-\varepsilon}(V_F-v)u(t,v) \, dv \\
    &\quad + \int_{V_F-\varepsilon}^{V_F} \underbrace{(V_F-v)}_{\leq \varepsilon} u(t,v) \, dv \\
    &\leq \int_{-\infty}^{-A} \underbrace{\frac{V_F-v}{(v-V_R)^2}}_{\leq \frac{V_F+A}{(A+V_R)^2}} (v-V_R)^2 u(t,v) \, dv + (V_F+A) \int_{-A}^{V_F-\varepsilon} u(t,v) \, dv \\
    &\quad + \varepsilon \underbrace{\int_{V_F-\varepsilon}^{V_F} u(t,v) \, dv}_{\leq 1} \\
    &\leq \frac{V_F+A}{(A+V_R)^2} M(t) + (V_F+A) \int_{-A}^{V_F-\varepsilon} u(t,v) \, dv + \varepsilon \\
    &\leq \frac{V_F+A}{(A+V_R)^2} D' + (V_F+A) \int_{-A}^{V_F-\varepsilon} u(t,v) \, dv + \varepsilon.
\end{align*}
Therefore, we obtain
\[
\int_{-A}^{V_F-\varepsilon} u(t,v) \, dv \geq \frac{1}{V_F+A} \left( c - \varepsilon - \frac{V_F+A}{(A+V_R)^2} D' \right).
\]
Choosing $A$ large enough such that $\frac{V_F+A}{(A+V_R)^2} D' \leq \frac{c}{4}$ and setting $\varepsilon = \min(\frac{c}{4}, \frac{V_F-V_R}{2})$, we get
\[
\int_{-A}^{V_F-\varepsilon} u(t,v) \, dv \geq \frac{1}{V_F+A} \cdot \frac{c}{4} := \eta > 0,
\]
which proves Proposition \ref{prop:Localisation of Mass}.\\
Now, we establish \eqref{eq:estimation_M_X}. Differentiating $M(t)$, we obtain
\[
\forall t \geq 0, \quad M'(t) = \int_{-\infty}^{V_F} (v-V_R)^2 \Big( \partial_v \big( (v+J(t))u \big) + \partial_v^2 u + \delta_{V_R} N(t) \Big) \, dv.
\]
By integration by parts and using the boundary conditions $\partial_v u(t, V_F) = -N(t)$ and $u(t, V_F) = 0$, we have
\begin{align*}
M'(t) &= -2 \int_{-\infty}^{V_F} (v-V_R)(v+J(t)) u(t,v) \, dv - \underbrace{(V_F-V_R)^2 N(t)}_{\geq 0} + 2 \underbrace{\int_{-\infty}^{V_F} u(t,v) \, dv}_{=1} \\
    &\leq -2M(t) + 2(J(t)+V_R) \int_{-\infty}^{V_F} (v-V_R) u(t,v) \, dv + 2.
\end{align*}
Using the inequality $2(J(t)+V_R)(v-V_R) \leq (v-V_R)^2 + (J(t)+V_R)^2 \leq (v-V_R)^2 + 2(J_\infty^2 + V_R^2)$, we obtain
\[
M'(t) \leq -M(t) + 2(1 + J_\infty^2 + V_R^2).
\]
We set
\begin{equation}\label{eq:definition_D_tC}
    D := 2(1 + J_\infty^2 + V_R^2) \quad \text{and} \quad t_C := \max\left(1, \log\left(\frac{C}{D}\right)\right).
\end{equation}
By Gronwall's Lemma, we get
\begin{equation}\label{eq:estimation_M_t_final}
    \forall t \geq 0, \quad M(t) \leq M(0) e^{-t} + D \leq C e^{-t} + D.
\end{equation}
For $t \geq t_C$, we obtain
\[
M(t) \leq 2D := D'.
\]
Now consider $X(t)$. By differentiating the integral, we find
\begin{align*}
    X'(t) &= \int_{-\infty}^{V_F} (V_F-v) \Big( \partial_v \big( (v+J(t))u(t,v) \big) + \partial_v^2 u(t,v) \Big) \, dv + (V_F-V_R) N(t) \\
    &= \int_{-\infty}^{V_F} (v+J(t)) u(t,v) \, dv +  (V_F-V_R) N(t) \\
    &= V_F + \underbrace{J(t)}_{\geq 0} - X(t) +\underbrace{(V_F-V_R) N(t)}_{\geq 0} \\
    &\geq V_F - X(t).
\end{align*}
Integrating the above inequality, we have
\[
\forall t \geq 0, \quad X(t) \geq X(0) e^{-t} + (1 - e^{-t}) V_F.
\]
Since $X(0) \geq 0$ and $V_F > 0$ according to \eqref{eq: condition V_R, V_F}, for $t \geq t_0 = \log(2)$, we conclude
\[
X(t) \geq \frac{V_F}{2} := c>0.
\]

\end{proof}

\subsection{Uniform pointwise lower bound of the solution}

\begin{proposition}\label{th:Lower Bound} [Lower Bound]
There exist a constant $\alpha \in (0,1)$ and a time $t'_C \geq t_C$ such that for all $s \geq 0$, there exists a probability measure $\nu_s\in L^1((-\infty,V_F))$ such that for any $u^0 \in \mathcal{M}^+_v((-\infty, V_F))$ satisfying $M(0) \leq C$, the following estimate holds
\[
\Gamma(t'_C + s, s) u^0 \geq \alpha \nu_s.
\]
\end{proposition}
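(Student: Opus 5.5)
The plan is to combine the mass localization of Proposition~\ref{prop:Localisation of Mass} with a uniform Harnack-type lower bound for the linear problem with absorbing boundary. Dropping the nonnegative source term $\delta_{V_R}N_u$ only decreases the solution, by the comparison principle. So it suffices to bound from below the solution $w$ of the killed equation, started at time $s+t_C$ from $u(s+t_C)$, on an interval $[s+t_C, s+t_C+1]$. We set $t'_C := t_C+1$.

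\textbf{Step 1: remove the time dependence of the drift.} The change of variables $y = v + \int_\sigma^t J(r)\,dr$ (or the moving frame absorbing the drift) transforms the equation into a heat/Ornstein--Uhlenbeck equation on a domain with moving boundary $y < V_F + \int J$. Because $0\le J\le J_\infty$, the boundary moves by at most $J_\infty$ over a unit time interval. Alternatively, we work directly with the killed diffusion $dV = -(V+J(t))dt + \sqrt2\, dW$, stopped at $V_F$, and its transition density $p_J(t,\sigma;v,w)$.

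\textbf{Step 2: uniform positivity of the kernel.} We show there is $\kappa>0$, depending only on $J_\infty$, $A$ and $\varepsilon$, such that $p_J(\sigma+1,\sigma;v,w)\ge \kappa$ for all $v\in[-A,V_F-\varepsilon]$, all $w\in K:=[-A,V_F-\varepsilon]$, and every input with $0\le J\le J_\infty$. To obtain this, we use Girsanov to compare with the driftless Brownian motion killed at $V_F$. On a compact set of paths staying in $[-A-1,V_F]$, the Radon--Nikodym density is bounded below uniformly, since the drift is bounded there by $A+1+J_\infty$. The killed Brownian kernel on $[-A-1,V_F]$ is explicit via reflection or Dirichlet eigenfunctions and is bounded below on $K\times K$ at time $1$. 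A more PDE-flavoured alternative is a parabolic Harnack inequality on $K$, whose constants depend only on the $L^\infty$ norm of the coefficients, hence only on $J_\infty$ and $A$. I expect this uniformity in the arbitrary measurable input $J$ to be the main obstacle. The Girsanov route handles it cleanly, because it only needs $\|J\|_\infty$.

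\textbf{Step 3: conclusion.} By Proposition~\ref{prop:Localisation of Mass}, $\int_K u(s+t_C,v)\,dv \ge \eta$. Hence, for $w\in K$,
\[
\Gamma(s+t'_C,s)u^0(w) \ \ge\ \int_K u(s+t_C,v)\,p_J(s+t'_C,s+t_C;v,w)\,dv \ \ge\ \eta\kappa .
\]
We then take $\nu_s := |K|^{-1}\mathbf 1_K$, which is independent of $s$, and $\alpha := \min(\eta\kappa|K|, 1/2)\in(0,1)$. This gives $\Gamma(t'_C+s,s)u^0\ge\alpha\nu_s$ for every $u^0$ with $M(0)\le C$.
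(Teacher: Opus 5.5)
Your proposal is correct in outline, but Step 2 takes a genuinely different route from the paper.

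\textbf{How the paper proceeds.} The paper never needs a kernel estimate for a time-dependent drift. Its explicit change of variables $\tau=\frac{e^{2t}-1}{2}$, $x=e^t v+\int_0^t J(z)e^z\,dz$ removes the drift entirely. The equation becomes the pure heat equation on the moving domain $x<s(\tau)$, and $s(\tau)\ge V_F$ because $J\ge 0$ and $V_F>0$. The solution of the heat equation on the fixed half-line $(-\infty,V_F)$, with Dirichlet condition and cut-off datum $\phi u^0$, is then a subsolution. The half-line Dirichlet heat kernel gives a lower bound on $(-A,V_F-\varepsilon)$ in the $x$ variable that does not depend on $J$ at all. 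All the $J$-dependence moves into the location of the window after reverting to $v$. One obtains $\nu_s$ uniform on $(A'_s,B'_s)$, an interval of fixed length whose position depends on $\int_0^{t_1}J(z+s)e^z\,dz$. This is why the statement allows $\nu_s$ to depend on $s$. It is harmless in Proposition~\ref{prop:contraction-Doeblin}, which only needs a common minorant for $\mu_+$ and $\mu_-$.

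\textbf{What your route buys and costs.} You keep the fixed window $K=[-A,V_F-\varepsilon]$ and get an $s$-independent $\nu$. That is slightly stronger, and your kernel bound does not use the sign of $J$. The price is a non-explicit ingredient: an interior lower bound for the Dirichlet kernel of an operator whose coefficients are only bounded and measurable in $t$.

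\textbf{The Harnack branch is sound.} First kill additionally at $-A-1$, which only lowers the kernel. Then all coefficients are bounded by $\max(A+1,V_F)+J_\infty$. Aronson-type lower bounds for Green's functions, or a Harnack chain seeded by a survival-of-mass bound, give $\kappa$ depending only on $A,\varepsilon,V_F,J_\infty$.

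\textbf{The Girsanov branch fails as written.} Relative to $x+\sqrt2\,W$, the density is
$\exp\bigl(-\frac12\int_0^1(X_t+J(t))\,dX_t-\frac14\int_0^1(X_t+J(t))^2\,dt\bigr)$.
The $\int X\,dX$ part is harmless by It\^o's formula. The term $\int_0^1 J(t)\,dX_t$, however, is a Wiener integral. For merely bounded measurable $J$ it is controlled only by $\sup|X|$ times the total variation of $J$, not by $\sup|X|$ and $\|J\|_\infty$. So the density is \emph{not} bounded below on the set of paths staying in a compact strip. Your claim that Girsanov ``only needs $\|J\|_\infty$'' is exactly the point that requires an extra argument.

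One repair is to condition on $X_1=y$. Under the bridge, $\int_0^1 J\,dX$ is Gaussian with mean $(y-x)\int_0^1 J$ and variance at most $2J_\infty^2$. Discarding $\{|\int J\,dX|>R\}$ then costs only a small fraction of the bridge probability, uniformly for $x,y\in K$.
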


\begin{proof}
As in Proposition~\ref{prop:Localisation of Mass}, we observe that it is enough to establish  the result only for $s=0$. Specifically, we show there exist $\alpha \in (0,1)$, a probability measure $\nu_0\in L^1((-\infty,V_F))$, and a time $t_C'$ such that for any $u^0 \in \mathcal{M}_v^+((-\infty,V_F))$ with $M(0) \leq C$, 
\[
\Gamma(t_C',0)u^0 \geq \alpha \nu_0,
\]
where the constant $\alpha$ and the time $t_C'$ depend on the coupling term $J$ only through its upper bound $J_\infty$ and the constant $C$.  This lower bound extends  then  to the operator $\Gamma(t'_C+s,s)$ for any $s \geq 0$.
\newline
To derive this lower bound, we first perform a change of variables that transforms the system into a Stefan-type problem. We then construct a subsolution, which enables us to define $\nu_0$ by comparison with a heat equation with Dirichlet boundary conditions, using the mass localization result from Proposition \ref{prop:Localisation of Mass}.
\newline
\underline{Change of variables.} We introduce the following change of variables:
\begin{equation}\label{eq: changement variables}
    \left\{
\begin{aligned}
& \tau(t) = \frac{e^{2t} - 1}{2}, \\
& x(v,t) = e^t v + \int_0^t J(z) e^z\, dz.
\end{aligned}
\right.
\end{equation}
We then define the new function \( q(\tau, x) \) by
\begin{equation}\label{eq: def q}
    q(\tau, x) = \frac{1}{\sqrt{2\tau + 1}} \, 
    u\Bigg(\frac{1}{2} \log(2\tau + 1), \frac{x - \int_0^\tau \tilde{J}(z) \, dz}{\sqrt{2\tau + 1}}\Bigg),
\end{equation}
where
\[
\tilde{J}(z) = \frac{J\Big(\frac{1}{2} \log(2z + 1)\Big)}{\sqrt{2z + 1}}.
\]
The function \( q \) satisfies a Stefan-type problem with a source term:
\begin{equation}\label{eq: equation sur q}
    \left\{
\begin{aligned}
& \partial_\tau q - \partial_x^2 q = M_q(\tau) \, \delta_{x = s_1(\tau)}, \quad \tau \geq 0, \; x \leq s(\tau), \\
& M_q(\tau) = -\partial_x q(\tau, s(\tau)), \\
& q(\tau, s(\tau)) = 0, \\
& s(\tau) = V_F \sqrt{2\tau + 1} + \int_0^\tau \tilde{J}(z) \, dz, \\
& s_1(\tau) = s(\tau) - (V_F - V_R) \sqrt{2\tau + 1}.
\end{aligned}
\right.
\end{equation}
The initial condition is given by
\[
\forall x \in (-\infty, V_F], \quad q(0, x) = u^0(x).
\]
\newline
\underline{Construction of a sub-solution.} Assume that \( u^0 \in \mathcal{M}^+_v((-\infty, V_F)) \) and that \( M_0(0) \leq C \). By Proposition \ref{prop:Localisation of Mass}, we may assume, without loss of generality, that there exists \(\varepsilon > 0\), depending only on the constant \(C\), such that \( u^0 \) satisfies
\[
\int_{-A}^{V_F - \varepsilon} u^0(v) \, dv \geq \eta.
\]
In the new variables, we note that for all $\tau \geq 0$, 
\[
s_1(\tau) = s(\tau) - (V_F - V_R) \sqrt{2\tau + 1} \leq s(\tau) \quad \text{and} \quad s(\tau) \geq V_F.
\] 
Let $\phi \in \mathcal{C}^\infty((-\infty, V_F])$ be a cut-off function such that $0 \leq \phi \leq 1$, with 
\[
\phi \equiv 1 \text{ on } (-A, V_F-\varepsilon), \quad 
\phi(x) = 0 \text{ for } x \leq -A-1 \text{ and } x \in \left(V_F-\frac{\varepsilon}{2}, V_F\right).
\]

Let $\rho$ denote the solution of the heat equation with Dirichlet boundary conditions:
\[
\left\{
\begin{aligned}
& \partial_\tau \rho - \partial_x^2 \rho = 0, && \tau \geq 0, \, x \in (-\infty, V_F), \\
& \rho(\tau, V_F) = 0, \\
& \rho(0, x) = \phi(x) u^0(x), && x \in (-\infty, V_F).
\end{aligned}
\right.
\]
Then $\rho$ is a subsolution of $q$, where $q$ is the solution of \eqref{eq: equation sur q} with initial data $u^0$.  
By spectral theory of the heat equation, there exist constants $\tau_1 > 0$, $c > 0$, and $\delta > 0$ such that
\[
\forall \tau \geq \tau_1, \, \forall x \in (-A, V_F-\varepsilon), \quad
\rho(\tau, x) \geq c \, e^{-\delta \tau} \int_{-A}^{V_F-\varepsilon} \rho(0, y) \, dy
= c e^{-\delta \tau} \int_{-A}^{V_F-\varepsilon} u^0(y) \, dy \geq c \eta e^{-\delta \tau}.
\]
Hence, we obtain
\[
\forall \tau \geq \tau_1, \, \forall x \in (-A, V_F-\varepsilon), \quad q(\tau, x) \geq c \eta e^{-\delta \tau}.
\]
Reverting to the original variables, let $t_1 = \frac{1}{2}\log(2\tau_1 + 1)$. We define the bounds at time $t_1$ by
\[
A' = e^{-t_1}\Big(-A - \int_0^{t_1} J(z)e^z \, dz\Big), \quad 
B' = e^{-t_1}\Big(V_F - \varepsilon - \int_0^{t_1} J(z)e^z \, dz\Big).
\]
Consequently, we have
\[
\forall x \in (A', B'), \quad u(t_1, x) \geq \frac{c \eta}{\sqrt{2\tau_1+1}} \, e^{-\delta \tau_1}.
\]
By setting $t'_C = t_C + t_1$ and defining
\[
\alpha = \min\left(\frac{3}{4}, \frac{(B'-A')c \eta}{\sqrt{2\tau_1+1}} \, e^{-\delta \tau_1}\right) \in (0,1),
\]
we obtain the Doeblin condition
\[
\Gamma(t_C',0) u^0 \geq \alpha \, \nu_0,
\]
where 
\[
\nu_0 = \frac{1}{B'-A'} \mathbf{1}_{(A', B')}\in L^1((-\infty,V_F))
\]
is the minorizing probability measure. Observe that $B'-A' = e^{-t_1}(V_F + A)$; hence, $\alpha$ depends solely on $A$ and, by extension, on $J_\infty$. For the general case where $s \geq 0$, the corresponding measure $\nu_s$ is given by
\[
\nu_s = \frac{1}{B'_s - A'_s} \mathbf{1}_{(A'_s, B'_s)}\in L^1((-\infty,V_F)),
\]
with the time-dependent boundaries defined as
\[
A'_s = e^{-t_1}\left(-A - \int_0^{t_1} J(z+s)e^{z} \, dz\right) \quad \text{and} \quad B'_s = e^{-t_1}\left(V_F -\varepsilon- \int_0^{t_1} J(z+s)e^{z} \, dz\right).
\]
Since the constant $\alpha$ depends only on the uniform bound $J_\infty$, it remains valid for the lower bound at any time $s$, yielding
\[
\Gamma(t_C'+s,s)u^0 \geq \alpha \nu_s.
\]
\end{proof}

\subsection{Lyapunov function and $L^1$ contraction for general initial data}
We now state a contraction result for the evolution operator $\Gamma$, under the assumption that the first two moments of the initial condition are bounded. 
Here, the initial data is understood as the difference of two initial datas in $\mathcal{M}^+_v((-\infty, V_F))$, so that its mean is zero.  This result will be instrumental in applying the Doeblin-Harris method. 
\begin{proposition}\label{prop:contraction-Doeblin}
Let $C \geq 0$, and let $\alpha, t'_C$ be the constants from Proposition \ref{th:Lower Bound}. 
Then, for any $f \in L^1_v((-\infty,V_F])$ satisfying
\[
\int_{-\infty}^{V_F} f(v)\, dv = 0 \quad \text{and} \quad \|(v-V_R)^2 f\|_1 \leq \frac{C}{2} \|f\|_1,
\]
the following contraction estimate holds:
\[
\forall s \geq 0, \quad \|\Gamma(t'_C + s, s) f\|_1 \leq (1-\alpha) \|f\|_1.
\]
\end{proposition}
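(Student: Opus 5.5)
The argument is the classical Doeblin coupling. We split $f$ into positive and negative parts, normalize them into probability densities, and apply the minorization of Proposition~\ref{th:Lower Bound} to each. The only delicate point is that the moment bound on $f$ must be transferred to each normalized piece.

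Since $\int f = 0$, we write $f = f^+ - f^-$ with $\int f^+ = \int f^- = \tfrac12\|f\|_1 =: m$. If $m=0$ there is nothing to prove. Otherwise we set $g^\pm := f^\pm/m \in \mathcal{M}^+_v((-\infty,V_F))$. The second moments then satisfy
\[
\int (v-V_R)^2 g^\pm \,dv \le \frac{1}{m}\|(v-V_R)^2 f\|_1 \le \frac{1}{m}\cdot\frac{C}{2}\|f\|_1 = C .
\]
This is exactly where the factor $\tfrac12$ in the hypothesis is used: each of $g^+$ and $g^-$ satisfies the hypothesis $M(0)\le C$ required in Proposition~\ref{th:Lower Bound}, with $M$ the moment centered at $V_R$ used there. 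Applying that proposition at the same initial time $s$, with the same measure $\nu_s$ (which depends only on $s$, $J$, $C$ and not on the datum), we get
\[
\Gamma(t'_C+s,s)g^\pm \ge \alpha\,\nu_s .
\]

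Next we use that $\Gamma$ is positivity preserving and mass preserving; this follows from the comparison principle and the conservation of mass recalled for the linear equation. It gives $h^\pm := \Gamma(t'_C+s,s)g^\pm - \alpha\nu_s \ge 0$ with $\int h^\pm = 1-\alpha$. By linearity,
\[
\Gamma(t'_C+s,s)f = m\big(\Gamma(t'_C+s,s)g^+ - \Gamma(t'_C+s,s)g^-\big) = m\,(h^+ - h^-),
\]
so that
\[
\|\Gamma(t'_C+s,s)f\|_1 \le m\big(\|h^+\|_1+\|h^-\|_1\big) = 2m(1-\alpha) = (1-\alpha)\|f\|_1 .
\]

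The main obstacle, though mild, is the justification that $\Gamma(t,s)$ maps $\mathcal{M}^+_v$ into itself: nonnegativity, mass conservation, and preservation of finite second moments. This is needed to apply Proposition~\ref{th:Lower Bound} to $g^\pm$ and to compute the masses of $h^\pm$. I would take it from the well-posedness theory of \eqref{eq:FP linéaire instat}, using the weak maximum principle for the linear Fokker--Planck equation with nonnegative source $\delta_{V_R}N_u$, together with the mass conservation identity from the introduction. One further check is that $\nu_s$ in Proposition~\ref{th:Lower Bound} is indeed independent of the initial datum, which holds by construction there.
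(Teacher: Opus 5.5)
Your proof is correct and follows the paper's argument: the same normalization $f=\tfrac{\|f\|_1}{2}(\mu_+-\mu_-)$, with the factor $\tfrac12$ used to give each normalized piece second moment at most $C$, then subtracting $\alpha\nu_s$ from both evolved densities and using mass conservation to get $(1-\alpha)\|f\|_1$. You are also right to point out that positivity and mass conservation of $\Gamma(t,s)$ are needed here; the paper uses both without stating them.
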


\begin{proof}
Let $f \in L^1_v((-\infty,V_F])$ satisfy the assumptions of Proposition \ref{prop:contraction-Doeblin}. 
Define 
\[
\mu_+ = \max\Big(0, \frac{2f}{\|f\|_1}\Big), \qquad 
\mu_- = \max\Big(0, \frac{-2f}{\|f\|_1}\Big),
\]
so that
\[
f = \frac{\|f\|_1}{2}(\mu_+ - \mu_-), \qquad 
\|(v-V_R)^2 \mu_\pm\|_1 \leq \Big\|(v-V_R)^2 \frac{2f}{\|f\|_1}\Big\|_1 \leq C.
\]
Since $\int_{-\infty}^{V_F} f(v)\, dv = 0$, we have $\mu_+, \mu_- \in \mathcal{M}_v^+((-\infty,V_F))$. 
Applying Proposition \ref{th:Lower Bound} then gives the lower bound
\[
\Gamma(s+t'_C,s)\mu_\pm \geq \alpha \nu_s.
\]
Now, we compute the $L^1$ norm of $\Gamma(t'_C+s,s) f$:
\begin{align*}
\|\Gamma(t'_C+s,s) f\|_1
&= \frac{\|f\|_1}{2} \,\| \Gamma(t'_C+s,s)\mu_+ - \Gamma(t'_C+s,s)\mu_- \|_1 \\
&= \frac{\|f\|_1}{2} \,\| (\Gamma(t'_C+s,s)\mu_+ - \alpha \nu_s) - (\Gamma(t'_C+s,s)\mu_- - \alpha \nu_s) \|_1 \\
&\le \frac{\|f\|_1}{2} \Big( \|\underbrace{\Gamma(t'_C+s,s)\mu_+ - \alpha \nu_s}_{\geq 0}\|_1 + \|\underbrace{\Gamma(t'_C+s,s)\mu_- - \alpha \nu_s}_{\geq 0}\|_1 \Big) \\
&= \frac{\|f\|_1}{2} \Bigg( \int_{-\infty}^{V_F} \big(\Gamma(t'_C+s,s)\mu_+ - \alpha \nu_s\big) \, dv 
       + \int_{-\infty}^{V_F} \big(\Gamma(t'_C+s,s)\mu_- - \alpha \nu_s\big) \, dv \Bigg) \\
&\le (1-\alpha) \, \|f\|_1.
\end{align*}
\end{proof}

\noindent {\em Proof of Theorem \ref{th:convergence doeblin-harris} : Lyapunov estimate and global control.}\\
In order to complement the Doeblin-type contraction argument and obtain a global control, we introduce a Lyapunov estimate that propagates the second  moment of the semigroup. This estimate will be a key ingredient to construct a weighted norm in which a global contraction holds.
\begin{proposition}\label{prop:Lyap}[Lyapunov control]
Let $f \in L^1_v((-\infty,V_F))$. Then, for all $t,s \geq 0$, the following estimate holds:
\begin{equation}\label{eq:controle Ms(f)_prop}
\|(v-V_R)^2\Gamma(t+s,s)f\|_1
\leq \|(v-V_R)^2f\|_1 e^{-t} + D \|f\|_1.
\end{equation}
\end{proposition}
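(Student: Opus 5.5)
The plan is to reduce the signed case to the nonnegative case using positivity of the evolution operator, and then reuse the second-moment computation from the proof of Proposition~\ref{prop:Localisation of Mass}.

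\textbf{Step 1: positivity and mass conservation.} Equation~\eqref{eq:FP linéaire instat} is linear in $u$. The operator $\Gamma(t,s)$ is positivity preserving, by the comparison/maximum principle for the Fokker--Planck equation with Dirichlet condition at $V_F$ and a nonnegative reinjection term $N_u\,\delta_{V_R}$. It also conserves the integral, $\int \Gamma(t,s)g\,dv=\int g\,dv$. Write $f=f_+-f_-$. Then $\Gamma(t+s,s)f=\Gamma(t+s,s)f_+-\Gamma(t+s,s)f_-$, and therefore
\[
|\Gamma(t+s,s)f|\le \Gamma(t+s,s)f_++\Gamma(t+s,s)f_-=\Gamma(t+s,s)|f|.
\]
Multiplying by $(v-V_R)^2$ and integrating reduces the claim to proving \eqref{eq:controle Ms(f)_prop} for the nonnegative datum $g:=|f|$, which has mass $m:=\|f\|_1$.

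\textbf{Step 2: moment estimate for a nonnegative datum.} As before it suffices to take $s=0$, since only the bound $J_\infty$ enters. For $g\ge 0$, the solution $w(t)=\Gamma(t,0)g$ stays nonnegative, so its flux satisfies $N_w(t)=-\partial_v w(t,V_F)\ge 0$. Set $M(t)=\int (v-V_R)^2 w\,dv$ and repeat the integration by parts from the proof of Proposition~\ref{prop:Localisation of Mass}. The boundary term $-(V_F-V_R)^2N_w(t)\le 0$ is dropped. The diffusion term now contributes $2\int w\,dv=2m$ instead of $2$. The drift cross term is bounded by
\[
2(J+V_R)(v-V_R)\le (v-V_R)^2+2(J_\infty^2+V_R^2),
\]
which after integration against $w$ gives $M(t)+2(J_\infty^2+V_R^2)m$. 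This yields
\[
M'(t)\le -M(t)+Dm,
\]
with the same $D=2(1+J_\infty^2+V_R^2)$ as in \eqref{eq:definition_D_tC}. Gronwall's lemma then gives
\[
M(t)\le M(0)e^{-t}+Dm(1-e^{-t})\le \|(v-V_R)^2f\|_1e^{-t}+D\|f\|_1.
\]
Combined with Step 1, this is exactly \eqref{eq:controle Ms(f)_prop}.

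\textbf{Main obstacle.} The delicate point is justifying Step 1 and the integrations by parts for a general $f\in L^1_v$. This requires positivity of $\Gamma$ and vanishing of the boundary terms at $-\infty$. I would first treat smooth, rapidly decaying data, for which the solution is classical with Gaussian-type decay at $-\infty$. I would then pass to general $f$ by density in the weighted $L^1_v$ norm. This density step uses the $L^1$-contractivity of $\Gamma$, which follows from positivity together with mass conservation, and lower semicontinuity of the weighted norm.
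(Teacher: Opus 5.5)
Your proof is correct and essentially matches the paper's: the paper writes $f=\|f_+\|_1\mu_+-\|f_-\|_1\mu_-$, uses linearity, positivity and the triangle inequality, and applies the probability-density moment bound $M_s(t)\le M_s(0)e^{-t}+D$ to each $\mu_\pm$, which is the same as your single Gronwall estimate on $\Gamma|f|$ with mass $\|f\|_1$. One small sign point: the drift cross term is actually $-2(J+V_R)\int(v-V_R)w\,dv$, but since $|2ab|\le a^2+b^2$ your bound still holds, and your remarks on positivity and the density argument make explicit what the paper uses tacitly.
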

\begin{proof}
We first recall that for any probability density $\mu$, the moment estimate obtained via Proposition \ref{prop:Localisation of Mass} reads
\begin{equation}\label{eq:lyap}
M_s(t) = \int_{-\infty}^{V_F} (v -V_R)^2 \Gamma(t+s, s)\mu(v) \, dv
\leq M_s(0)e^{-t}+D.
\end{equation}
Let $f \in L^1_v((-\infty,V_F))$. We decompose it into its positive and negative parts:
\[
f = \|f_+\|_1 \mu_+ - \|f_-\|_1 \mu_-,
\]
where $f_+ = \max(f,0)$, $f_- = \max(-f,0)$ and $\mu_\pm = \frac{f_\pm}{\|f_\pm\|_1}$ are probability measures.
Using the linearity of $\Gamma(t+s,s)$ and the triangle inequality, we obtain
\begin{align*}
\|(v-V_R)^2\Gamma(t+s,s)f\|_1 
&= \|(v-V_R)^2\Gamma(t+s,s)(\|f_+\|_1\mu_+ - \|f_-\|_1\mu_-)\|_1 \\
&\leq \|f_+\|_1 \|(v-V_R)^2\Gamma(t+s,s)\mu_+\|_1 
+ \|f_-\|_1 \|(v-V_R)^2\Gamma(t+s,s)\mu_-\|_1.
\end{align*}
Applying \eqref{eq:lyap} estimate to $\mu_\pm$, we get
\[
\|(v-V_R)^2\Gamma(t+s,s)\mu_\pm\|_1
\leq \|(v-V_R)^2\mu_\pm\|_1 e^{-t} + D.
\]
Therefore,
\begin{align*}
\|(v-V_R)^2\Gamma(t+s,s)f\|_1
&\leq \|f_+\|_1 \|(v-V_R)^2\mu_+\|_1 e^{-t}
+ \|f_-\|_1 \|(v-V_R)^2\mu_-\|_1 e^{-t} \\
&\quad + D(\|f_+\|_1 + \|f_-\|_1).
\end{align*}
Finally, we observe that
\[
\|f_+\|_1 \mu_+ + \|f_-\|_1 \mu_- = |f|,
\qquad
\|f_+\|_1 + \|f_-\|_1 = \|f\|_1,
\]
which implies
\[
\|f_+\|_1 \|(v-V_R)^2\mu_+\|_1 
+ \|f_-\|_1 \|(v-V_R)^2\mu_-\|_1
= \|(v-V_R)^2 f\|_1.
\]
This yields the desired estimate
\[
\|(v-V_R)^2\Gamma(t+s,s)f\|_1
\leq \|(v-V_R)^2 f\|_1 e^{-t} + D \|f\|_1,
\]
and concludes the proof of Proposition \ref{prop:Lyap}.
\end{proof}

\noindent In order to combine the local Doeblin-type contraction in Proposition \ref{prop:contraction-Doeblin} with the global Lyapunov control in Proposition \ref{prop:Lyap}, we introduce the following suitable weighted norm in which the semigroup will be shown to be contractive. Let $\beta > 0$. For any $f \in L^1_v$, we define the weighted norm
\[
N_\beta(f) = \|f\|_1 + \beta \|(v-V_R)^2 f\|_1.
\]
Let us observe that the norm $N_\beta$ is equivalent to the standard weighted $L^1_v$ norm, for all $\beta >0$.
We now fix $s \geq 0$ and aim to prove that there exist $\beta > 0$ and $C > 0$ such that the operator
\[
f \mapsto \Gamma(t_C' + s, s)f
\]
is a contraction in the norm $N_\beta$. The argument relies on a decomposition according to the distribution of the mass. Roughly speaking, we distinguish between configurations where a non-negligible fraction of the mass lies in a compact region allowing us to apply the Doeblin condition and those where the mass is spread in the tails, in which case the Lyapunov estimate yields a decay of the weighted moment. We therefore split the proof into two complementary cases.

\noindent{\em First case:} $\|(v-V_R)^2 f\|_1 \leq \frac{C}{2} \|f\|_1$.
Using Proposition~\ref{prop:contraction-Doeblin} together with Proposition \ref{prop:Lyap}, we obtain
\[
\|\Gamma(t_C' + s, s)f\|_1 \leq (1 - \alpha)\|f\|_1,
\]
and
\[
\|(v-V_R)^2 \Gamma(t_C' + s, s)f\|_1 
\leq e^{-t'_C} \|(v-V_R)^2 f\|_1 + D \|f\|_1.
\]
Combining these two inequalities yields
\begin{align*}
N_\beta\big(\Gamma(t_C' + s, s)f\big)
&\leq (1 - \alpha + \beta D)\|f\|_1 
+ \beta e^{-t'_C} \|(v-V_R)^2 f\|_1 \\
&= (1 - \alpha + \beta D)
\left(
\|f\|_1 
+ \frac{\beta e^{-t'_C}}{1 - \alpha + \beta D} \|(v-V_R)^2 f\|_1
\right).
\end{align*}
Choosing $\beta \in \left(0, \frac{\alpha}{2D}\right)$ ensures that
\[
1 - \alpha + \beta D \leq 1 - \frac{\alpha}{2}.
\]
Moreover, since $t_C' \geq t_C \geq 1$, we have
\[
\frac{e^{-t'_C}}{1 - \alpha + \beta D}
\leq \frac{e^{-t'_C}}{1 - \frac{\alpha}{2}}
\leq 1.
\]
Therefore, we obtain the contraction estimate
\[
N_\beta\big(\Gamma(t_C' + s, s)f\big)
\leq \left(1 - \frac{\alpha}{2}\right) N_\beta(f)
=: \gamma^{(1)} N_\beta(f).
\]

\noindent{\em Second case:} $\|(v-V_R)^2 f\|_1 \geq \frac{C}{2} \|f\|_1$.
Let $\varepsilon > 0$. Using again Proposition \ref{prop:Lyap}, we have
\begin{align*}
N_\beta\big(\Gamma(t_C' + s, s)f\big)
&= \|\Gamma(t_C' + s, s)f\|_1 + \beta \|(v-V_R)^2 \Gamma(t_C' + s, s)f\|_1 \\
&\leq (1 + \beta D)\|f\|_1 + \beta e^{-t_C'} \|(v-V_R)^2 f\|_1 \\
&\leq (1 - \varepsilon + \beta D)\|f\|_1 + \varepsilon \|f\|_1 + \beta e^{-t_C'} \|(v-V_R)^2 f\|_1.
\end{align*}
Using the assumption of this case, $\|f\|_1 \leq \frac{2}{C} \|(v-V_R)^2 f\|_1$, we obtain
\[
N_\beta\big(\Gamma(t_C' + s, s)f\big)
\leq (1 - \varepsilon + \beta D)\|f\|_1
+ \left(\frac{2\varepsilon}{C} + \beta e^{-t_C'}\right)\|(v-V_R)^2 f\|_1.
\]
We now choose $\varepsilon = 2\beta D$ and assume that $\beta$ is small enough so that $\gamma^{(2)} := 1 - \beta D \in (0,1)$. Then
\[
N_\beta\big(\Gamma(t_C' + s, s)f\big)
\leq \gamma^{(2)}\left[\|f\|_1 + \beta\left(\frac{4D}{C} + e^{-t_C'}\right)\|(v-V_R)^2 f\|_1\right].
\]
Choosing $C = 8D$ and using that $t_C' \geq \log(2)$, we get
\[
\frac{4D}{C} + e^{-t_C'} = \frac{1}{2} + e^{-t_C'} \leq 1.
\]
Therefore,
\[
N_\beta\big(\Gamma(t_C' + s, s)f\big) \leq \gamma^{(2)} N_\beta(f).
\]
We can now conclude the proof of Theorem \ref{th:convergence doeblin-harris}.
Setting $\gamma = \max(\gamma^{(1)}, \gamma^{(2)}) \in (0,1)$, we have shown that for any $f \in L^1_v$ with zero mean and any $s \geq 0$,
\[
N_\beta\big(\Gamma(t_C' + s, s)f\big) \leq \gamma N_\beta(f).
\]
By induction, for any $k \in \mathbb{N}$ and $s \geq 0$,
\[
N_\beta\big(\Gamma(k t_C' + s, s)f\big) \leq \gamma^k N_\beta(f).
\]
For an arbitrary $t \geq 0$, we write $t = n t_C' + r$ with $n = \lfloor t/t_C' \rfloor$ and $r \in [0, t_C')$. Using again \eqref{eq:controle Ms(f)_prop}, we obtain for short times
\[
\forall r \geq 0, \qquad N_\beta\big(\Gamma(r+s, s)f\big) \leq (1 + \beta D) N_\beta(f).
\]
Using the norm equivalence, we conclude
\begin{align*}
\|\Gamma(t+s, s)f\|_{L^1_v}
&\leq (m')^{-1} N_\beta(\Gamma(t+s, s)f) \\
&\leq (m')^{-1} \gamma^n N_\beta(\Gamma(r+s, s)f) \\
&\leq (m')^{-1} \gamma^{t/t_C' - 1} (1 + \beta D) M' \|f\|_{L^1_v} \\
&\leq M e^{-\lambda t} \|f\|_{L^1_v},
\end{align*}
where $\lambda = -\frac{\log \gamma}{t_C'} > 0$ and $M = (m' \gamma)^{-1} M' (1 + \beta D)$.
This completes the proof of Theorem \ref{th:convergence doeblin-harris}. \hfill $\square$

\section{Exponential convergence towards the stationary state under perturbations of the input current}\label{sec:lienu}
In this section, we assume that the input current $J(t)$ converges sufficiently fast towards a constant value $J' \geq 0$, in the sense that
\begin{equation}\label{eq: conv J}
    \exists \beta > 0, \quad \int_0^{\infty} e^{\beta t} |J(t) - J'| \, dt < \infty.
\end{equation}
\noindent 
Under this assumption, we show that the solution converges exponentially fast towards the stationary state associated with $J'$. In other words, although the dynamics is driven by a time-dependent input, the system asymptotically behaves as if the current were constant and equal to $J'$. In this sense, the system progressively ``forgets'' its past, and the long-time behavior is entirely determined by the limiting value $J'$. More precisely, the following Theorem holds : 
\begin{theorem}\label{th:convergence exp u et N}
Let $u^0 \in \mathcal{M}^+_v((-\infty,V_F))$ and define $u(t)=\Gamma(t,0)u^0$ for all $t \geq 0$. Assume that $J(t)$ satisfies \eqref{eq:propriété J} and \eqref{eq: conv J} for some $J' \geq 0$ and $\beta > 0$. Let $M, \alpha > 0$ be the constants given by Theorem~\ref{th:convergence doeblin-harris}, and denote by $\overline{u}(J')$ the stationary state defined in \eqref{eq:stat J}. Then, for all $t \geq 0$, the following estimate holds:
\begin{equation}\label{eq:inégalité exp sur u}
    \| u(t) - \overline{u}(J') \|_{L^1}
    \leq C_t \, e^{-\min(\alpha,\beta) t},
\end{equation}
where
\[
C_t = M \left( 
\| u^0 - \overline{u}(J') \|_{L^1}
+ \| \partial_v \overline{u}(J') \|_{L^1}
\int_0^{t} |J(s) - J'| e^{\beta s} \, ds
\right).
\]
Moreover, there exist constants $c(J_\infty), K(J') > 0$ such that for any $\gamma \in (0, \min(2,\alpha,\beta))$ and any $T \in \mathbb{R}^+ \cup \{+\infty\}$,
\begin{equation}\label{eq:borne int N_u}
\begin{aligned}
(V_F - V_R)^2 \int_0^{T} |N_u(t) - \Phi_1(J')| \, e^{\gamma t} \, dt 
&\leq \| (v - V_R)^2 (u^0 - \overline{u}(J')) \|_{L^1} \\
&\quad + c(J_\infty)\, \frac{C_T}{\min(\alpha,\beta)-\gamma} \\
&\quad + K(J') \int_0^{T} |J(t) - J'| \, e^{\gamma t} \, dt.
\end{aligned}
\end{equation}
\end{theorem}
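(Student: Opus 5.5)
We will use a Duhamel argument around the stationary state. Note that the convention in \eqref{eq:FP linéaire instat} is that the drift is $-(J(t)+v)$, so $\overline u(J')$ here denotes the stationary state of the autonomous equation with constant input $J'$. Its flux is $\Phi_1(J')$, matching the notation of the statement.

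Set $w(t)=u(t)-\overline u(J')$. Then $w$ has zero mass and satisfies the non-autonomous equation \eqref{eq:FP linéaire instat}, driven by $J(t)$, with the source term
\[
(J(t)-J')\,\partial_v \overline u(J').
\]
This follows by subtracting the stationary equation for $\overline u(J')$ (constant input $J'$) from the equation for $u$. Both functions vanish at $V_F$, and the reset terms combine linearly.

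By Duhamel's formula,
\[
w(t)=\Gamma(t,0)w(0)+\int_0^t\Gamma(t,s)\big[(J(s)-J')\partial_v\overline u(J')\big]\,ds .
\]
The integrand in the source has zero mean, since $\overline u(J')$ vanishes at $V_F$ and decays at $-\infty$. Theorem~\ref{th:convergence doeblin-harris} therefore applies to each term and gives
\[
\|w(t)\|\le Me^{-\alpha t}\|w(0)\|+M\|\partial_v\overline u\|\int_0^t|J(s)-J'|e^{-\alpha(t-s)}\,ds .
\]
Let $\mu=\min(\alpha,\beta)$. Since $e^{-\alpha(t-s)}\le e^{-\mu t}e^{\beta s}$, the integral is bounded by $e^{-\mu t}\int_0^t|J-J'|e^{\beta s}\,ds$, which yields \eqref{eq:inégalité exp sur u}. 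One technical point must be checked: $\partial_v\overline u(J')$ is bounded in $L^1_v$, and $\overline u$ has a kink at $V_R$ but no delta, so its derivative is a genuine function. We also need Theorem~\ref{th:convergence doeblin-harris} in the weighted norm; the estimate then holds in $L^1_v$, hence also in $L^1$.

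For the flux estimate \eqref{eq:borne int N_u}, we use the second moment identity computed in Proposition~\ref{prop:Localisation of Mass}. For a solution with input $J$,
\[
\frac{d}{dt}\int (v-V_R)^2u=-2\int (v-V_R)(v+J)u+2-(V_F-V_R)^2N_u .
\]
The stationary state satisfies the same relation with $J'$, with zero left-hand side and flux $\Phi_1(J')$. Subtracting the two, with $m(t)=\int (v-V_R)^2w$, gives
\[
(V_F-V_R)^2(N_u-\Phi_1(J'))=-m'-2\int (v-V_R)(v+J)w-2(J-J')\int (v-V_R)\overline u .
\]
Since we want absolute values of $N_u-\Phi_1(J')$, we cannot integrate this signed identity directly. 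Instead we will use a comparison or renormalization: the function $|w|$ satisfies a Kato-type inequality, and the boundary flux of $|w|$ at $V_F$ is $|N_u-\Phi_1|$ because $w(V_F)=0$. The weighted moment of $|w|$ then obeys
\[
\frac{d}{dt}\int (v-V_R)^2|w|\le -(V_F-V_R)^2|N_u-\Phi_1|+C(J_\infty)\|w\|_{L^1_v}+K(J')|J-J'| .
\]
Here the reset term contributes nonpositively at $V_R$ because of the weight $(v-V_R)^2$, and the drift term is bounded by the weighted norm. Multiplying by $e^{\gamma t}$ and integrating over $[0,T]$ produces a term $\gamma\int e^{\gamma t}\int(v-V_R)^2|w|$, which is absorbed into $C(J_\infty)\|w\|_{L^1_v}$. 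We then use the first part in $L^1_v$, $\|w(t)\|_{L^1_v}\le C_Te^{-\mu t}$, so that
\[
\int_0^T e^{(\gamma-\mu)t}\,dt\le \frac{1}{\mu-\gamma}.
\]
The restriction $\gamma<2$ comes from the dissipation $-2\int v^2$ in the drift.

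We expect the main obstacle to be justifying the Kato inequality at the boundary and the reset point. The first part itself must also be established in the weighted norm $L^1_v$, so that $C(J_\infty)\|w\|_{L^1_v}$ is controlled.
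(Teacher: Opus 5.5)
Your proposal is correct and takes essentially the same route as the paper. For the first estimate, both use Duhamel's formula for $w=u-\overline u(J')$ with the zero-mean source $(J-J')\partial_v\overline u(J')$ and Theorem~\ref{th:convergence doeblin-harris}. For the flux estimate, both test a Kato-type inequality for $|w|$ against $(v-V_R)^2$, which removes the reset Dirac and produces $|N_u-\Phi_1(J')|$ as the boundary flux at $V_F$, then multiply by $e^{\gamma t}$ and integrate. The paper asserts that weighted Kato inequality without the regularization argument you flag as the main obstacle, and your choice to carry $\|w\|_{L^1_v}$ rather than the unweighted $L^1$ norm through both steps is more careful than the paper's own bookkeeping.
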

\begin{proof}
The main idea of the proof of Theorem~\ref{th:convergence exp u et N} is to compare the solution $u(t)$ associated with the time-dependent input $J(t)$ to the stationary solution $\overline{u}(J')$ corresponding to the limiting input $J'$. This allows us to express $u(t)$ as a perturbation around a well-understood reference state, with a remaining source term that depends only on the stationary solution $\overline{u}(J')$ and its derivative, as well as on the deviation of $J(t)$ from $J'$. In this way, the evolution of the difference $u(t)-\overline{u}(J')$ can be controlled using the linear exponential convergence established in Theorem~\ref{th:convergence doeblin-harris}, the assumption~\eqref{eq:propriété J} on $J(t)$, and the smoothness properties of the stationary solution.  To  this, we define the time-dependent linear operator $\mathcal{L}(t)$ by
\[
\forall t \geq 0, \quad \mathcal{L}(t) f = \partial_v\big((J(t)+v) f\big) + \partial_v^2 f + N_f \, \delta_{V_R},
\]
where $N_f = -f'(V_F)$ represents the flux at the boundary for a function $f$. Then, $u$ solves the evolution problem
\begin{equation}\label{eq:evolution u}
\begin{cases}
\displaystyle \frac{d u}{d t} = \mathcal{L}(t) u, \quad t \geq 0, \\
u(0) = u^0.
\end{cases}
\end{equation}
   
  \noindent{\bf Step 1. Proof of inequality \eqref{eq:inégalité exp sur u}.}  Recall that $u$ satisfies
\[
\frac{d u}{d t} = \mathcal{L}(t) u, \quad \text{with } \mathcal{L}(t) f = \partial_v\big((J(t)+v) f\big) + \partial_v^2 f + N_f \delta_{V_R}, \quad N_f = -f'(V_F).
\]
Define the difference
\[
w(t) = u(t) - \overline{u}(J'),
\]
where $\overline{u}(J')$ is the stationary solution associated with the limiting input $J'$. Then $w$ satisfies
\begin{equation}\label{eq: equation w final}
\frac{d w}{d t} = \mathcal{L}(t) w + h(t), \quad 
h(t) = (\mathcal{L}(t) - \mathcal{L}_{J'}) \overline{u}(J') = (J(t) - J') \partial_v \overline{u}(J').
\end{equation}
By Duhamel's formula, we have
\[
w(t) = \Gamma(t,0) w(0) + \int_0^t \Gamma(t,s) h(s) \, ds,
\]
where $\Gamma(t,s)$ is the evolution operator of the linear problem. Since
\[
\int_{-\infty}^{V_F} h(s,v)\, dv = (J(s) - J') \int_{-\infty}^{V_F} \partial_v \overline{u}(J',v)\, dv = 0,
\]
and $\int_{-\infty}^{V_F} w(0,v)\, dv = 0$, we can apply Theorem~\ref{th:convergence doeblin-harris} to obtain
\begin{align*}
\|w(t)\|_{L^1} &\leq \|\Gamma(t,0) w(0)\|_{L^1} + \int_0^t \|\Gamma(t,s) h(s)\|_{L^1} \, ds \\
&\leq M e^{-\alpha t} \|w(0)\|_{L^1} + M \|\partial_v \overline{u}(J')\|_{L^1} \int_0^t |J(s) - J'| e^{-\alpha (t-s)} \, ds.
\end{align*}
Distinguishing cases depending on the relative size of $\alpha$ and $\beta$, and setting $\gamma = \min(\alpha, \beta)$, we obtain
\begin{equation}\label{eq: w final bound}
\|w(t)\|_{L^1} \leq C_t e^{-\gamma t}, \quad 
C_t = M \left( \|w(0)\|_{L^1} + \|\partial_v \overline{u}(J')\|_{L^1} \int_0^t |J(s) - J'| e^{\beta s} \, ds \right),
\end{equation}
which proves \eqref{eq:inégalité exp sur u}.

\noindent{\bf Step 2. Proof of inequality \eqref{eq:borne int N_u}.}  Set
\[
N_w(t) = N_u(t) - \Phi_1(J') = -\partial_v w(t,V_F), \quad M(t) = \|(v-V_R)^2 w(t)\|_{L^1}.
\]
Multiplying the differential inequality for $|w|$ by $(v-V_R)^2$ and integrating over $(-\infty,V_F)$, we obtain after integration by parts and using boundary conditions:
\[
M'(t) + 2 M(t) + (V_F-V_R)^2 |N_w(t)| \leq c(J_\infty) \|w(t)\|_{L^1} + K(J') |J(t) - J'|,
\]
with
\[
c(J_\infty) = 2 + (V_R+J_\infty)(1+V_R), \quad 
K(J') = \|(v-V_R)^2 \partial_v \overline{u}(J')\|_{L^1}.
\]
Multiplying by $e^{\gamma t}$ for $\gamma \in (0, \min(2, \alpha, \beta))$ and integrating over $[0,T]$, we get
\begin{align*}
(V_F-V_R)^2 \int_0^T |N_w(t)| e^{\gamma t} \, dt
&\leq \| (v-V_R)^2 (u^0 - \overline{u}(J')) \|_{L^1} \\
&\quad + c(J_\infty) \int_0^T \|w(t)\|_{L^1} e^{\gamma t} \, dt \\
&\quad + K(J') \int_0^T |J(t) - J'| e^{\gamma t} \, dt.
\end{align*}
Finally, applying \eqref{eq: w final bound} to bound $\|w(t)\|_{L^1} \leq C_T e^{-\gamma t}$, we obtain for any $T>0$:
\begin{align*}
(V_F-V_R)^2 \int_0^T |N_w(t)| e^{\gamma t} \, dt 
&\leq \| (v-V_R)^2 (u^0 - \overline{u}(J')) \|_{L^1} \\
&\quad + c(J_\infty) \frac{C_T}{\min(\alpha, \beta) - \gamma} \\
&\quad + K(J') \int_0^T |J(t) - J'| e^{\gamma t} \, dt,
\end{align*}
which completes the proof of Theorem~\ref{th:convergence exp u et N}.
\end{proof}

\section{$L^\infty$ bounds on the activity $N(t)$}\label{sec:bornesunif} 

In the context of the study of equation~\eqref{eq:FP delay}, the input $J$ in Theorem~\ref{th:convergence exp u et N} will be replaced by the delayed neuron firing rate $N_u^d(\cdotp-d)$. To apply Theorem~\ref{th:convergence exp u et N} in this context, we must therefore verify that the equation with $N_u^d$ satisfies the hypotheses of Theorem~\ref{th:convergence exp u et N}, and in particular that $N_u^d$ remains bounded in the sense required by the following theorem:
\begin{theorem}\label{th:bound on N_u(t)}
Let  $Q$ be the function defined by 
\begin{equation}\label{eq:Q}
     Q(v) =
 \begin{cases}
   e^{- \frac{v^{2}}{2}}
   \displaystyle \int_{\max(v,\, V_R)}^{V_F} 
      e^{\frac{z^{2}}{2}} \, dz,
      & \text{for } v \in (0, V_F), \\[10pt]
   \displaystyle \int_{V_R}^{V_F} 
      e^{\frac{z^{2}}{2}} \, dz,
      & \text{for } v \le 0.
 \end{cases}
\end{equation}
Let $u(t) = \Gamma(t,0)u^0$ be the solution of \eqref{eq:FP linéaire instat}, with $J(t)$ satisfying \eqref{eq:propriété J}. Assume also that $\frac{u^0}{Q}\in L^\infty((-\infty,V_F])$. Then, there exists $\beta_1>0$ depending only on $J_\infty$ and $\|\frac{u^0}{Q}\|_\infty$ such that 
\[
\forall t \geq 0, \quad u(t)\leq \beta_1 Q,
\]
and hence,
\[
\sup_{t \geq 0} N_u(t)\leq \beta_1 < \infty.
\]
\end{theorem}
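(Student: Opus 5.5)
The plan is a comparison (maximum principle) argument: we show that $\beta_1 Q$, or a slightly modified version of it, is a supersolution of \eqref{eq:FP linéaire instat}. The modification will be needed to absorb the Dirac source.

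Write $\mathcal{L}_J f = \partial_v((J+v)f) + \partial_v^2 f$. We compute $\mathcal{L}_J Q$ separately on $(-\infty,0)$, $(0,V_R)$ and $(V_R,V_F)$.

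On $(V_R,V_F)$ we have $Q = e^{-v^2/2}\int_v^{V_F} e^{z^2/2}dz$. This is exactly the stationary profile with $J=0$, so $\partial_v(vQ)+\partial_v^2 Q = 0$ there. The remaining term is $J\partial_v Q$, and since $Q$ is decreasing on $(V_R,V_F)$ with $J\ge 0$, this term is nonpositive. Hence $\mathcal{L}_J Q \le 0$ on this interval.

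On $(0,V_R)$ we have $Q = c_0 e^{-v^2/2}$ with $c_0=\int_{V_R}^{V_F}e^{z^2/2}dz$. Again $\partial_v(vQ)+\partial_v^2Q = 0$, and $J\partial_v Q \le 0$ because $Q' = -vQ \le 0$ for $v>0$. So $\mathcal{L}_J Q\le 0$ here as well.

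On $(-\infty,0)$ we have $Q\equiv c_0$ is constant, so $\mathcal{L}_J Q = c_0 > 0$. This is a positive defect. At $v=0$, $Q$ is $C^1$, since $Q'(0^+) = 0$. At $V_R$, the jump in $Q'$ is $-e^{-V_R^2/2}e^{V_R^2/2} = -1$, so $\partial_v^2 Q$ contains $-\delta_{V_R}$. This is precisely the feature that lets $\beta Q$ absorb the reinjection $\delta_{V_R}N(t)$: we need $\beta\,\delta_{V_R} \ge N(t)\,\delta_{V_R}$ with $N(t) = -\partial_v(\beta Q)(V_F) = \beta$ when $u = \beta Q$. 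Computed in this way, the Dirac terms balance exactly, as they should for a stationary-type profile.

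The difficulty is therefore the region $v<0$, where the defect is positive, together with the need for a strict margin that the comparison argument can close. My plan is to replace $Q$ on $(-\infty,0]$ by $\bar Q = Q\,\psi(v)$, or to add a small correction such as $\beta(Q + \kappa \chi)$. Here $\chi$ should produce $\mathcal{L}_J\chi\le -c<0$ on $v<0$ and be bounded by a multiple of $Q$. One candidate is $\chi(v) = e^{\lambda v}$ on $v<0$, suitably glued, which uses the confining drift $+v$: we get $\partial_v((J+v)e^{\lambda v}) = (1+\lambda(J+v)+\lambda^2)e^{\lambda v}$, whose sign is negative for $v$ very negative. On the remaining compact region, the time-dependent exponential factor from the next step would handle it.

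An alternative fallback is to work with $\bar u = e^{Kt}\beta Q$ on finite intervals. That gives only $e^{Kt}$ growth, however, which is not uniform in time. For the uniform claim I would instead exploit the fact that the solution's mass below $0$ is transported toward higher $v$.

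With a genuine supersolution $S\ge Q$ (comparable to $Q$) in hand, the rest is routine. Set $\beta_1 = \|u^0/Q\|_\infty\cdot\sup(S/Q)$ times a constant, and let $w = \beta_1 S - u$. Then $w$ satisfies $\partial_t w - \mathcal{L}_J w \ge (\beta_1 - N_u(t))\delta_{V_R}$, with $w(t,V_F)=0$ and $w(0)\ge0$. We argue on the first time $t^*$ where $N_u$ exceeds $\beta_1$, equivalently where $\partial_v w(V_F)$ changes sign. Before $t^*$ the source is nonnegative, so the parabolic maximum principle on $(-\infty,V_F)$ with the Dirichlet condition gives $w\ge0$. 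Then the Hopf lemma at $V_F$ gives $-\partial_v w(V_F)>0$, which contradicts the definition of $t^*$; here we use the strict margin built into $S$.

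Finally, $u\le\beta_1 S$ together with $u(V_F)=S(V_F)=0$ yields $N_u(t) = -\partial_v u(V_F) \le -\beta_1\partial_v S(V_F)$, which is $\lesssim \beta_1$. I expect the main obstacle to be the construction of the negative-$v$ correction with constants depending only on $J_\infty$.
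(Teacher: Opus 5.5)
Your computation on $(0,V_F)$ is correct and matches what the paper uses: there $-\partial_v((J+v)Q)-Q''=-JQ'+\delta_{V_R}$ with $-Q'(V_F)=1$. So $Q$ is a supersolution on $(0,V_F)$, and the only defect is the constant one on $v<0$.

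The gap is the step you flag as the main obstacle. No time-independent supersolution $S$ comparable to $Q$ on $(-\infty,0)$ exists, whatever correction you add. Away from $V_R$, the stationary supersolution inequality for a fixed input value $J\ge 0$ reads $\big(S'+(J+v)S\big)'\le 0$, so $F:=S'+(J+v)S$ is nonincreasing. Fix $v_*<-J$ and set $a:=F(v_*)$. If $S\ge c>0$ on $(-\infty,v_*]$, then $S'(v)=F(v)+|J+v|\,S(v)\ge a+c\,|J+v|$ there. Integrating over $(v,v_*)$ shows that $S(v_*)-S(v)$ grows quadratically as $v\to-\infty$, which is absurd. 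The same argument along a sequence shows that every nonnegative stationary supersolution tends to $0$ at $-\infty$.

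Your candidate illustrates this: $\mathcal{L}_J(\kappa e^{\lambda v})\to 0$ as $v\to-\infty$, so it cannot absorb the constant defect $c_0$. Moreover, $\|u^0/Q\|_\infty<\infty$ does not force $u^0$ to decay: thin bumps of fixed height at $v=-n$ with widths $w_n$, $\sum_n n^2w_n<\infty$, are admissible. So no such $S$ can dominate $u^0$ with a constant depending only on $\|u^0/Q\|_\infty$ and $J_\infty$. The underlying reason is that pointwise comparison does not see that $u(t)$ has unit mass. Bounded non-integrable data are amplified like $e^t$ far to the left; the paper's $P$ solves the equation exactly, with value $e^t$, on $v<V_R$. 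The constraint $\|u(t)\|_{L^1}=1$ has to enter the argument, and your remark that mass below $0$ is transported to the right does not supply it.

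The paper controls $\{v\le 0\}$ by an a priori bound on $u$ rather than by a supersolution.
\begin{itemize}
\item After the change of variables $\tau=(e^{2t}-1)/2$, $x=e^tv+\int_0^tJ(z)e^z\,dz$, the equation becomes the heat equation away from the reset curve $s_1(\tau)$.
\item Since $v\le 0<V_R$, a parabolic cylinder of radius $R\sim\sqrt{\tau}$ around the point stays in that region. The local maximum estimate $|q(\tau_1,x_1)|\le KR^{-3}\|q\|_{L^1(\mathrm{cylinder})}\le K/R$ uses only the unit mass of each time slice, and yields $u(t,v)\le C(t_0,J_\infty)$ for all $t\ge t_0$ and $v\le 0$.
\item The initial layer is handled by your $e^t$ fallback, in the form of $P$. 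Set $K_0=\|Q/P(0,\cdot)\|_\infty\|u^0/Q\|_\infty$ and $\beta_0=2K_0/Q(0)$. The first contact of $u$ with $\beta_0 Q$ can only occur at some $v_0\le 0$, because $\beta_0Q$ is a supersolution on $(0,V_F)$. Then $u\le K_0P\le K_0e^{t}$ forces the contact time to be at least $\log 2$.
\item Choose $\beta_1=\max\big(\beta_0,\,2C(\log 2,J_\infty)/Q(0)\big)$. A first contact with $\beta_1Q$ at $v_1\le 0$, $t_1\ge\log 2$ would give $2C(\log 2,J_\infty)\le u(t_1,v_1)\le C(\log 2,J_\infty)$, a contradiction.
\end{itemize}
Your boundary step (Hopf at $V_F$ giving $N_u\le\beta_1$) is fine once such a bound on $\{v\le 0\}$ is available.
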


To prove this result, we follow the arguments of \cite{CPSS} by constructing suitable supersolutions of \eqref{eq:FP linéaire instat} and applying a parabolic regularization argument. The construction of these supersolutions requires that condition~\eqref{eq: condition V_R, V_F} holds. Before entering the core of the proof of Theorem \ref{th:bound on N_u(t)}, we first introduce the notion of supersolution and a main result in comparaison principle. 
\begin{definition}
Let $V_0 \in [-\infty, V_F)$. We say that $\overline{u} \in \mathcal{D}'([0,T]\times [V_0,V_F])$ is a \emph{supersolution} to \eqref{eq:FP linéaire instat} on $[0,T]\times [V_0,V_F]$ if 
\[
\forall (t,v)\in [0,T]\times [V_0,V_F],\qquad  
\frac{\partial \overline{u}}{\partial t} - \frac{\partial}{\partial v}\big((J(t)+v)\overline{u}\big)-\frac{\partial^2 \overline{u}}{\partial v^2} \geq \overline{N_u}(t)\delta_{V_R}(v)
\]
in the distributional sense, with $\overline{N_u}(t) = -\partial_v \overline{u}(V_F,t)$.
\end{definition}
\begin{proposition}[Comparison principle]\label{prop:carac-super-sol}
Let $u(t)$ be a solution of \eqref{eq:FP linéaire instat} and $\overline{u}$ a supersolution on $[0,T]\times [V_0,V_F]$ such that $\overline{u}(V_0,t) \ge u(V_0,t)$ for $t \in [0,T]$ and $\overline{u}(0,v) \ge u^0(v)$ for $v \in (V_0, V_F]$. Then 
\[
\forall (t,v) \in [0,T]\times (V_0,V_F], \quad \overline{u}(t,v) \ge u(t,v).
\]
Moreover, if $\overline{u}(0,\cdot) - u^0$ is not identically zero, then
\[
\forall (t,v) \in [0,T]\times (V_0,V_F), \quad \overline{u}(t,v) > u(t,v).
\]
\end{proposition}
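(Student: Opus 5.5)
The plan is to apply a weak maximum principle to the difference $w = u - \overline{u}$, viewed as a subsolution of a linear parabolic problem on $(t,v) \in [0,T]\times(V_0,V_F)$. The only nonstandard features are the nonlocal source $N(t)\delta_{V_R}$ and the flux boundary condition, so the first task is to turn the source into an interface condition that can be handled.

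\textbf{Setup.} Set $w = u - \overline{u}$. By linearity, $w$ satisfies in the distributional sense
\[
\partial_t w - \partial_v\big((J(t)+v)w\big) - \partial_v^2 w \le N_w(t)\,\delta_{V_R}, \qquad N_w(t) = -\partial_v w(t,V_F).
\]
The data satisfy $w(t,V_F)\le 0$ (both functions vanish there, or $\overline{u}\ge 0$), $w(t,V_0)\le 0$ and $w(0,\cdot)\le 0$. Away from $V_R$ this is a classical uniformly parabolic inequality with bounded, smooth-in-$v$ drift. At $v=V_R$ the Dirac mass translates into a jump condition on the derivative,
\[
\partial_v w(t,V_R^-) - \partial_v w(t,V_R^+) \ge -N_w(t) = \partial_v w(t,V_F).
\]

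\textbf{Main argument.} We prove $w\le 0$ by contradiction. Replace $w$ by $\tilde w = e^{-\lambda t}w$ with $\lambda$ large, which absorbs the zeroth-order term $-\partial_v(J+v)w = -w - (J+v)\partial_v w$. Suppose $\max \tilde w > 0$ on $[0,T]\times[V_0,V_F]$, attained at some $(t_0,v_0)$; then $t_0>0$ and $v_0\notin\{V_0,V_F\}$. There are two cases.
\begin{itemize}
\item If $v_0 \ne V_R$, the classical parabolic maximum principle gives a contradiction.
\item If $v_0 = V_R$, the maximum forces $\partial_v w(V_R^-)\ge 0 \ge \partial_v w(V_R^+)$.
\end{itemize}
To handle the second case we need a sign on $N_w$. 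At $V_F$ we have $w=0\le$ the positive maximum, so Hopf's lemma is not directly decisive. Instead, I would use an integrated argument. Integrating the inequality over $(V_R,V_F)$ shows that the mass of $w^+$ cannot grow: the flux $N_w$ leaving at $V_F$ is exactly what is reinjected at $V_R$. Concretely, I would compute $\frac{d}{dt}\int (w)_+\,dv$ using Kato's inequality. The boundary flux contributes $-\partial_v w_+(V_F)\cdot$ (outflow), and the reinjection contributes at most $N_w\,\mathbf 1_{w(V_R)>0}$. Since $w(V_F)=0$ and $w\le0$ nearby would give $\partial_v w(V_F)\ge$..., the two terms combine to show
\[
\frac{d}{dt}\int_{V_0}^{V_F} w_+ \le 0,
\]
hence $w_+\equiv 0$. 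This $L^1$-contraction (Kato) argument is cleaner than the pointwise maximum argument, and it is the one I would commit to.

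\textbf{Strict inequality.} Once $w\le 0$ is established, set $z = -w \ge 0$. This $z$ is a supersolution of a linear parabolic equation with the nonnegative source $-N_w\delta_{V_R}$; here $N_w = \partial_v z(V_F) \ge 0$ because $z\ge0$ and $z(V_F)=0$. The strong maximum principle then gives $z>0$ in the interior unless $z\equiv 0$ on a backward-connected set. Since $z(0,\cdot)\not\equiv 0$, this yields the strict inequality.

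\textbf{Main obstacle.} The delicate step is the sign bookkeeping for the nonlocal reinjection in the Kato estimate. One must justify that the reinjected flux $N_w(t)$, when positive, is dominated by the outflow of $w_+$ at $V_F$. This requires care with regularity near $V_F$ and $V_R$ when applying Kato's inequality.
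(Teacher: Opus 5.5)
The paper states this comparison principle without proof; it belongs to the supersolution framework taken from \cite{CPSS}. So your argument has to stand on its own. The route you finally commit to, an $L^1$/Kato estimate on $w_+$, is sound. It is cleaner than the natural first-contact alternative (propagating $\overline{u}-u>0$ by the strong maximum principle and Hopf's lemma at $V_F$, which typically needs a perturbation for the non-strict case and boundary regularity at $V_0$), because it never needs the sign of $N_w$ in advance. As written, however, the decisive estimate is left open, and the hypothesis it needs is misstated.

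\textbf{Closing the bookkeeping.} The Dirac mass encodes $\partial_v w(t,V_R^-)-\partial_v w(t,V_R^+)\le N_w(t)$; your displayed jump condition has the wrong direction and sign. The step you leave open follows directly from $w(t,V_F)=0$. If $N_w(t)>0$, then $\partial_v w(t,V_F)<0$, so $w>0$ just left of $V_F$, and the diffusive outflow of $w_+$ there is exactly $-\partial_v w(t,V_F)=N_w(t)$. If $N_w(t)\le 0$, the reinjection is nonpositive and the boundary term at $V_F$ is $\le 0$. Test the inequality against $\operatorname{sgn}_+(w)$, with $\operatorname{sgn}_+(0)=0$, either componentwise on $\{w>0\}$ or with a regularized sign. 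This gives
\[
\frac{d}{dt}\int_{V_0}^{V_F} w_+\,dv \le \big[(J(t)+v)\,w_+\big]_{v=V_0}^{v=V_F} - \big(N_w(t)\big)_+ - \partial_v w_+(t,V_0^+) + N_w(t)\,\mathbf{1}_{\{w(t,V_R)>0\}} \le 0.
\]
The last inequality holds because $w_+$ vanishes at both ends, $\partial_v w_+(t,V_0^+)\ge 0$, and $N_w\mathbf{1}_{\{w(t,V_R)>0\}}\le (N_w)_+$. Hence $w_+\equiv 0$. For $V_0=-\infty$, as when comparing with $P$, the terms at $-\infty$ need a cutoff argument.

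\textbf{The hypothesis.} This uses $w(t,V_F)=0$, i.e.\ $\overline{u}(t,V_F)=0$. Your alternative ``or $\overline{u}\ge 0$'', i.e.\ only $w(t,V_F)\le 0$, does not suffice. If $w(t,V_F)<0$, the outflow term disappears, while $N_w=N_u-\overline{N_u}$ can still be positive and is reinjected at $V_R$. The conclusion then genuinely fails. Let $\phi$ solve the linear problem, with its own reinjection $N_\phi\delta_{V_R}$, where $\phi(t,V_F)=c>0$, $\phi(t,V_0)=0$, and $\phi(0,\cdot)\ge 0$ vanishes near $V_R$ with $\partial_v\phi(0,V_F)>0$. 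Then $\overline{u}=u+\phi$ is a supersolution (with equality) satisfying all the stated hypotheses. But $N_\phi<0$ for small $t$, so $\phi<0$ near $V_R$, i.e.\ $\overline{u}<u$ there. So $\overline{u}(t,V_F)=0$ must be part of the definition of supersolution. The paper omits it, but $P$ and $Q$ both satisfy it.

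\textbf{Strict inequality.} There is a sign slip. From $z\ge 0$ and $z(t,V_F)=0$ you get $\partial_v z(t,V_F)\le 0$, so $N_w=\partial_v z(t,V_F)\le 0$, not $\ge 0$. This is exactly what makes $-N_w\delta_{V_R}\ge 0$. Then $z$ is a weak supersolution of the homogeneous equation across $V_R$, and the strong minimum principle gives $z>0$ on $(0,T]\times(V_0,V_F)$. Strictness cannot hold at $t=0$, where $\overline{u}(0,\cdot)-u^0$ may vanish at points. The statement should therefore be read for $t\in(0,T]$.
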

\noindent
\noindent
We now introduce two explicit supersolutions. The first one is defined on the entire domain, so we avoid comparison issues at the boundary. Using it, comparison gives at best an upper bound on the neuronal flux that grows exponentially in time, which does not provide a global $L^\infty$ bound for the solution. The second supersolution is time-independent and potentially provides a global $L^\infty$ bound,  but it is only a supersolution on $(0,V_F)$, so to use it effectively, we must first control the solution $u(t)$ at the boundary to compare the two solutions. More precisely, the following Proposition holds
\begin{proposition}\label{prop:exemple-sur-sol}
Let 
\[
P(t,v)=
\begin{cases}
e^{t}, & v \le V_R, \\[2mm]
e^t\dfrac{V_F - v}{V_F - V_R}, & v \in (V_R, V_F),
\end{cases}
\]
Then $P$ is an upper solution on $[0,+\infty) \times (-\infty, V_F]$ to \eqref{eq:FP linéaire instat}, and $(t,v)\mapsto Q(v)$ from \eqref{eq:Q} is an upper solution on $[0,+\infty) \times [0, V_F]$ to \eqref{eq:FP linéaire instat}.
\end{proposition}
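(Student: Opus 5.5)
The plan is a direct distributional computation. For each candidate $\overline{u}\in\{P,Q\}$ I split the domain at the reset point $V_R$. On each open piece where $\overline{u}$ is smooth, I compute the classical quantity
\[
\mathcal{S}\overline{u}:=\partial_t \overline{u}-\partial_v\big((J(t)+v)\overline{u}\big)-\partial_v^2\overline{u}
\]
and show it is nonnegative, using only $J(t)\ge 0$ from \eqref{eq:propriété J} and the sign condition \eqref{eq: condition V_R, V_F}. At $v=V_R$ both functions are continuous, so the drift term $(J+v)\overline{u}$ produces no singular part. The only singular contribution then comes from $-\partial_v^2\overline{u}$, and equals $-\big[\partial_v\overline{u}\big]_{V_R}\,\delta_{V_R}$, where $[\cdot]_{V_R}$ is the jump (right limit minus left limit). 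It then suffices to check that this jump coincides with $\overline{N_u}(t)=-\partial_v\overline{u}(t,V_F)$. Adding the nonnegative regular part gives the required inequality in $\mathcal{D}'$.

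\textbf{For $P$.} On $v<V_R$ we have $P=e^t$, so $\mathcal{S}P=e^t-e^t=0$. On $(V_R,V_F)$ we have $\partial_tP=P$ and $\partial_v^2P=0$, so
\[
\mathcal{S}P=P-P-(J+v)\partial_vP=(J(t)+v)\,\frac{e^t}{V_F-V_R}\ \ge 0,
\]
since $J\ge0$ and $v>V_R>0$. The slope jumps from $0$ to $-e^t/(V_F-V_R)$ at $V_R$, which yields the term $\frac{e^t}{V_F-V_R}\delta_{V_R}$. This is exactly $-\partial_vP(t,V_F)=\overline{N_P}(t)$, so $P$ is a supersolution on the whole half-line.

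\textbf{For $Q$ on $[0,V_F]$.} Since $Q$ is time independent, $\mathcal{S}Q=-Q-(J+v)Q'-Q''$.
\begin{itemize}[label={}]
\item On $(V_R,V_F)$, differentiating the integral gives $Q'=-vQ-1$, hence $Q''=(v^2-1)Q+v$. Substituting yields $\mathcal{S}Q=J(t)(vQ+1)\ge0$.
\item On $(0,V_R)$, $Q=c\,e^{-v^2/2}$ with $c=\int_{V_R}^{V_F}e^{z^2/2}dz$, so $Q'=-vQ$ and $Q''=(v^2-1)Q$. This gives $\mathcal{S}Q=J(t)\,vQ\ge0$, precisely because $v\ge0$. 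This is why $Q$ is only claimed to be a supersolution on $[0,V_F]$.
\item At $V_R$, $Q$ is continuous and $Q'$ jumps by $-1$, producing $+\delta_{V_R}$. Since $Q(V_F)=0$ and $Q'(V_F)=-V_FQ(V_F)-1=-1$, we get $\overline{N_Q}=1$, and the singular terms match.
\end{itemize}

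The only delicate point is bookkeeping rather than a real obstacle. One must justify that the distributional derivative of a continuous, piecewise-$C^2$ function is the classical one plus the jump measure, and verify the continuity of $P$ and $Q$ at $V_R$ so that no extra $\delta'$ or $\delta$ terms arise from the first-order terms. Keeping the sign conventions consistent with \eqref{eq:FP linéaire instat}, namely the drift $-(J+v)$ and $N=-\partial_v u(V_F)$, is where errors would most likely slip in.
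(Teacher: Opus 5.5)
Your proposal is correct and follows the same route as the paper: a direct distributional computation showing that the regular part of $\partial_t\overline{u}-\partial_v\big((J+v)\overline{u}\big)-\partial_v^2\overline{u}$ is nonnegative (using $J\ge 0$ together with $v>V_R>0$ for $P$, and $v\ge 0$ for $Q$), and that the jump of $\partial_v\overline{u}$ at $V_R$ produces exactly $\overline{N_u}(t)\,\delta_{V_R}$. Your bookkeeping for $P$ is cleaner than the written proof, which drops both of the following. First, you keep the term $-P$ from the product rule, which cancels $\partial_tP$. Second, you keep the factor $1/(V_F-V_R)$, and it appears consistently in both the Dirac mass and $-\partial_vP(t,V_F)$.
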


\begin{proof}
For $P$, we have $-\partial_v^2 P = e^t \delta_{V_R}$ and $\partial_v P(t, V_F) = e^t$. Hence,
\begin{align*}
\partial_t P - \partial_v\big((J(t)+v)P\big) - \partial_v^2 P 
&= \partial_t P - (v+J(t)) \partial_v P - \partial_v^2 P \\
&= e^t - (v+J(t)) \partial_v P + e^t \delta_{V_R} \ge 0,
\end{align*}
so $P$ is indeed an upper solution on $[0,+\infty) \times (-\infty, V_F]$. For $Q$, note that in the distributional sense,
\[
-\frac{d}{dv} (v Q(v)) - Q''(v) = \delta_{V_R}(v), \quad v \le V_F.
\]
Thus,
\[
-\frac{d}{dv} ((v+J(t)) Q(v)) - Q''(v) = -J(t) Q'(v) + \delta_{V_R}(v).
\]
Now, for $v \in [0, V_F]$,
\[
Q'(v) = - \mathbf{1}_{[V_R, V_F]} - v Q(v) < 0,
\]
so $Q$ is non-increasing. Therefore,
\[
-\frac{d}{dv} ((v+J(t)) Q(v)) - Q''(v) \ge \delta_{V_R}(v),
\]
and $Q$ is an upper solution on $[0,+\infty) \times [0,V_F]$.
\end{proof}
Let us now introduce a classical result on the smoothing effect for the heat equation, which will allow us to combine the advantages of both supersolutions constructed in Proposition~\ref{prop:exemple-sur-sol}. 
\begin{proposition}\label{prop: reg L^inf eq chaleur}
    There exists a constant $K>0$ such that if $f$ is a solution of the Heat Equation on $D_0 = [t_0-r^2,t_0]\times[x_0-r,x_0+r]$ for a given $t_0,x_0\in \mathbb{R},r>0$ , which is 
    $$\partial_t f-\partial_x^2f = 0\quad \text{on}\quad D_0.$$
    Then,
    \begin{equation}\label{eq:regularite elliptique}
        |f(t_0,x_0)|\leq \frac{K}{r^3}\|f\|_{1, D_0}.
    \end{equation}

\end{proposition}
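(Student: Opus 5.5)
The plan is to reduce to the unit-scale case by the parabolic scaling, and then bound the value at the top of the cylinder by the mean value property with respect to a heat ball. Set $g(s,y) = f(t_0 + r^2 s, x_0 + r y)$ on $D_1=[-1,0]\times[-1,1]$. Then $g$ solves $\partial_s g - \partial_y^2 g = 0$ on $D_1$, and by the change of variables
\[
\|g\|_{1,D_1} = \frac{1}{r^3}\|f\|_{1,D_0}.
\]
It therefore suffices to prove $|g(0,0)| \le K \|g\|_{1,D_1}$ for a universal constant $K$. The factor $r^{-3}$ in the statement comes exactly from the Jacobian $r^2\cdot r$.

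For the unit estimate I will use the mean value formula for caloric functions (Evans' heat-ball formula). In one dimension the heat ball of radius $\rho$ at $(0,0)$ is
\[
E(\rho)=\Big\{(s,y):\ s<0,\ \Phi(-s,y)\ge \rho^{-1}\Big\},
\]
where $\Phi$ is the heat kernel. Choose $\rho_0>0$ small enough that $E(\rho_0)\subset D_1$. The mean value formula then gives
\[
g(0,0)=\frac{1}{4\rho_0}\iint_{E(\rho_0)} g(s,y)\,\frac{y^2}{s^2}\,dy\,ds.
\]
The weight $y^2/s^2$ is unbounded near the vertex, so the $L^1$ bound does not follow directly. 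I will handle this by averaging the formula over $\rho\in[\rho_0/2,\rho_0]$. Equivalently, I can use the Fubini form of the formula with a smooth cutoff: write $g(0,0)$ as $\iint g\,\psi$ for some kernel $\psi$ that is bounded and supported in $D_1$, obtained from the representation $g=\Phi*(\text{commutator terms})$ after multiplying by a cutoff $\chi$.

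Concretely, the cleaner route is the cutoff representation. Take $\chi\in C_c^\infty$ with $\chi\equiv1$ near $(0,0)$, supported in $(-1,0]\times(-1,1)$ (a parabolic neighbourhood, so that $\chi$ is allowed to be nonzero at $s=0$). Then
\[
(\partial_s-\partial_y^2)(\chi g)=g(\partial_s\chi-\partial_y^2\chi)-2\partial_y(g\,\partial_y\chi)+2g\,\partial_y^2\chi .
\]
Since $\chi g$ vanishes at $s=-1$, Duhamel's formula gives
\[
g(0,0)=\iint \Big[\Phi(-s,-y)\big(\partial_s\chi-\partial_y^2\chi\big)+2\,\partial_y\Phi(-s,-y)\,\partial_y\chi\Big]\,g\,dy\,ds .
\]
The derivatives of $\chi$ vanish near the vertex $(0,0)$. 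Hence $\Phi$ and $\partial_y\Phi$ are bounded on their support, and this bracket is a bounded kernel. Taking absolute values yields $|g(0,0)|\le K\|g\|_{1,D_1}$.

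The main obstacle is this Duhamel step: making sure the kernel stays bounded. This requires the support of $\nabla\chi$ to avoid the singular point $(0,0)$ of $\Phi(-s,-y)$, together with checking that no boundary term appears at $s=0$. The final answer is evaluated exactly at $s=0$, so the identity should be derived for $t<0$ and then passed to the limit using the interior continuity of $g$. The rest is routine.
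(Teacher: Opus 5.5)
Your proof is correct, but it takes a different route from the paper. The paper gives no argument of its own: it cites the local maximum principle (the ``$L^p$ estimate for the local maximum'', Theorem 7.36 in Lieberman's book). That is a De Giorgi--Nash--Moser type bound of the form $\sup_{Q(r/2)}|f|\le C\,r^{-(n+2)/p}\|f\|_{L^p(Q(r))}$, valid for weak solutions of general divergence-form parabolic equations, used here with $p=1$, $n=1$. You instead use the fact that the operator is exactly the heat operator. Parabolic scaling reduces to the unit cylinder, and the Jacobian $r^2\cdot r$ accounts for the factor $r^{-3}$. Duhamel's formula for $\chi g$, with $\chi\equiv 1$ on a full parabolic neighbourhood of the vertex, then gives a kernel supported on $\operatorname{supp}\nabla_{s,y}\chi$. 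On that set $\Phi(-s,-y)$ and its $y$-derivative are bounded, since $\Phi$ extended by $0$ for negative times is smooth away from $(0,0)$. Your route is elementary, self-contained and yields an explicit constant. The cited theorem is a black box, but it is much more robust: it survives variable coefficients and lower-order terms, so it could be applied to the Fokker--Planck operator directly without passing to $q$.

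Two small corrections. First, the integration by parts actually gives the kernel $\Phi(-s,-y)\bigl(\partial_s\chi+\partial_y^2\chi\bigr)+2\,\partial_y\bigl[\Phi(-s,-y)\bigr]\partial_y\chi$; you dropped the contribution of $2g\,\partial_y^2\chi$. This does not affect boundedness. Second, your remark that averaging the heat-ball formula over $\rho\in[\rho_0/2,\rho_0]$ is ``equivalent'' is wrong. Every heat ball $E(\rho)$ has its vertex at $(0,0)$, so on $E(\rho_0/2)$ the averaged weight is still a constant multiple of $y^2/s^2$, which blows up like $\log(1/|s|)/|s|$ near the vertex. A cutoff that is a function of the level sets of $\Phi$ has the same defect. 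What makes your kernel bounded is precisely that $\chi\equiv1$ on a whole parabolic neighbourhood of $(0,0)$, so the cutoff route you actually carry out is the one to keep.
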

\begin{proof}
    This result comes from \cite{zbMATH01061253}, Theorem 7.36, as a general form, for parabolic equation, called $L^p$ estimate for local maximum.
\end{proof}

\begin{proposition}\label{prop:borne u elliptique}
    Let $u$ be the solution to \eqref{eq:FP linéaire instat} with $u^0\in \mathcal{M}_v^+((-\infty,V_F))$. Then, for all $t_0 > 0$, there exists a constant $C(t_0, J_\infty)$ such that:
    \begin{equation}\label{eq: estimation u elliptique}
        \forall t \geq t_0, \forall v \leq 0, \quad |u(t,v)| \leq C(t_0, J_\infty).
    \end{equation}
\end{proposition}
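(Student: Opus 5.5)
We prove Proposition~\ref{prop:borne u elliptique} by local parabolic smoothing (Proposition~\ref{prop: reg L^inf eq chaleur}) applied after the change of variables \eqref{eq: changement variables}. The key observation is that on $\{v\le 0\}$, and in fact on $\{v<V_R\}$, the source term $\delta_{V_R}N_u(t)$ and the Dirichlet boundary are absent. There $u$ solves a linear Fokker--Planck equation with bounded drift coefficient, which becomes exactly the heat equation in the variables $(\tau,x)$. The only global information available is conservation of mass, $\|u(t)\|_{L^1}=1$, and an $L^1\to L^\infty$ local estimate converts it into a pointwise bound.

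\textbf{Step 1 (reduction to the heat equation).} Fix $t_0>0$ and $t_*\ge t_0$. As in the proof of Proposition~\ref{th:Lower Bound}, the evolution is invariant under time shift, with dependence on $J$ only through $J_\infty$. So we restart at time $s=t_*-t_0/2$ (or $s=0$ if $t_0/2$ is large; take $h=\min(t_0,1)/2$ and $s=t_*-h$). Apply \eqref{eq: changement variables} with $J(\cdot+s)$, and let $q$ be defined by \eqref{eq: def q}. Then $q$ solves $\partial_\tau q-\partial_x^2q=0$ for $x<s_1(\tau)$. Since $s_1(\tau)=V_R\sqrt{2\tau+1}+\int_0^\tau\tilde J\ge V_R\sqrt{2\tau+1}$, the image of $\{v\le V_R/2\}$ lies at distance at least $\tfrac{V_R}{2}\sqrt{2\tau+1}\ge V_R/2$ from the source. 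Here we use $V_R>0$ from \eqref{eq: condition V_R, V_F}. Mass is preserved in the sense $\int q(\tau,x)\,dx=\int u\,dv\le 1$, because the Jacobian factor $\sqrt{2\tau+1}$ exactly compensates the prefactor in \eqref{eq: def q}.

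\textbf{Step 2 (local estimate).} Set $\tau_h=(e^{2h}-1)/2$. For a target point $v\le 0$ at time $t_*$, let $x_0$ be its image, and choose $r=\min\big(\sqrt{\tau_h},\,V_R/2\big)$. The parabolic cylinder $D_0=[\tau_h-r^2,\tau_h]\times[x_0-r,x_0+r]$ then lies in $\tau\ge 0$ and in the source-free region $x<s_1(\tau)$. On $D_0$ we have $\|q\|_{1,D_0}\le r^2\sup_\tau\|q(\tau)\|_{L^1}\le r^2$. Proposition~\ref{prop: reg L^inf eq chaleur} gives
\[
|q(\tau_h,x_0)|\le \frac{K}{r^3}\, r^2=\frac{K}{r}.
\]
Returning to $u$, we get $|u(t_*,v)|=\sqrt{2\tau_h+1}\,|q(\tau_h,x_0)|\le e^{h}K/r$. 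This depends only on $t_0$ and on $V_R$. Dependence on $J_\infty$ enters only through checking that the cylinder avoids $s_1$; since $\tilde J\ge0$ only pushes $s_1$ to the right, even this check is essentially $J$-independent.

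\textbf{Main obstacle.} The delicate point is justifying rigorously that the cylinder stays in the region where $q$ is a genuine (distributional, hence smooth) solution of the heat equation, uniformly in the target point $v\le 0$ and in $J$. This involves the geometric comparison between $x_0\pm r$ and $s_1(\tau)$ over $\tau\in[\tau_h-r^2,\tau_h]$, which needs $x_0+r\le s_1(\tau)$ for all such $\tau$. We will check it via the monotonicity of $s_1$ and the explicit formula $x_0=e^{h}v+\int_0^h J(z+s)e^z\,dz\le \int_0^h J e^z$, compared with $s_1(\tau_h-r^2)\ge V_R+\int_0^{\tau_h-r^2}\tilde J$. The residual $J$-integral difference over a short window is at most $J_\infty(\,\cdot\,)$, which is small. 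If needed, we will shrink $h$ in terms of $J_\infty$, which explains the dependence $C(t_0,J_\infty)$. A second technicality is that Proposition~\ref{prop: reg L^inf eq chaleur} requires $q$ to solve the heat equation classically on $D_0$. This follows from interior regularity since $\delta$ is supported away from $D_0$, and can be handled by mollification if necessary.
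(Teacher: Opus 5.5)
Your proof is correct and uses the same ingredients as the paper's: the change of variables \eqref{eq: changement variables}--\eqref{eq: def q} to the heat equation, conservation of $\int q\,dx$, and the $L^1\to L^\infty$ estimate of Proposition~\ref{prop: reg L^inf eq chaleur} on a parabolic cylinder to the left of the reset curve $s_1$. What differs is how you get uniformity in large time.

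The paper changes variables once, from time $0$, and places at $(\tau_1,x_1)$ a cylinder of radius $R=\min(\alpha\sqrt{2\tau_1+1},\sqrt{\tau_1})$. Since the distance to $s_1$ grows like $V_R\sqrt{2\tau+1}$, the admissible radius grows at the same rate as the Jacobian factor in $u=\sqrt{2\tau+1}\,q$. The two cancel in $|u(t_1,v_1)|\le K\sqrt{2\tau_1+1}/R$.

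You instead restart at $t_*-h$, using that after a time shift the estimates depend on $J$ only through $J_\infty$. Everything then happens on a window of fixed length with a fixed radius $r$. This is slightly cleaner: there is no large-$\tau$ geometry, and the bound is simply $e^hK/r$. It costs only the time-translation remark that the paper already uses in Propositions~\ref{prop:Localisation of Mass} and~\ref{th:Lower Bound}.

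One correction: your claim that the cylinder check is ``essentially $J$-independent'' is wrong. The target point $x_0$ is itself shifted by $\int_0^{\tau_h}\tilde J$, while the cylinder reaches back to times $\tau\in[\tau_h-r^2,\tau_h]$ where $s_1$ is smaller. So the gap $s_1(\tau)-x_0-r$ is only bounded below by $V_R-r-\int_{\tau}^{\tau_h}\tilde J\ge V_R/2-J_\infty r^2$.

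This bound is not merely pessimistic. For a constant input $J\equiv J_\infty$ and fixed $h$, the cylinder through $v=0$ with your $r=\min(\sqrt{\tau_h},V_R/2)$ does meet $s_1$ once $J_\infty$ is large. Hence shrinking $h$ in terms of $J_\infty$, or adding $\sqrt{V_R/(4J_\infty)}$ to the minimum defining $r$, is necessary rather than ``if needed''. It plays exactly the role of the factor $\sqrt{V_R/(2J_\infty)}$ in the paper's $\alpha$, and it is the actual source of the $J_\infty$-dependence of $C(t_0,J_\infty)$.
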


\begin{proof}
    Let $t_0 > 0$. Fix $t_1 \geq t_0$ and $v_1 \in (-\infty, 0]$.
    Consider again, the change of variables $(\tau,x)\rightarrow(t,v)$ from \eqref{eq: changement variables} and $q(\tau,x)$ defined from \eqref{eq: def q}.

    Then, $q$ is a solution to the following diffusion problem with free boundaries on the domain $\mathcal{D} = \{(x,\tau) \in \mathbb{R} \times \mathbb{R}^+ \mid x \leq s(\tau)\}$ that we stated in \eqref{eq: equation sur q}.
    In particular, on the set $\Omega = \mathcal{D} \setminus \{(s_1(\tau), \tau) \mid \tau \geq 0\}$, $q$ satisfies
    \[
    \partial_\tau q - \partial_x^2 q = 0.
    \]
    Let $x_1, \tau_1$ be such that $t_1 = \frac{1}{2}\log(2\tau_1+1)$ and
    \[
    \frac{x_1 + \int_0^{\tau_1} \tilde{J}(z) \, dz}{\sqrt{2\tau_1+1}} = v_1.
    \]
    We also define $\tau_0$ such that $t_0 = \frac{1}{2}\log(2\tau_0+1)$.

    For all $r > 0$, we define the parabolic cylinder $\Lambda_0^r = [x_1-r, x_1+r] \times [\tau_1-r^2, \tau_1]$.
    We seek to choose $R > 0$ as large as possible such that $\Lambda_0^R \subset \Omega$. We rely on the fact that $v_1 \leq 0 < V_R$.

    To ensure that $\Lambda_0^R \subset \Omega$, it suffices that the spatial and temporal boundaries satisfy the following condition:
    \begin{equation}\label{eq:condition Lambda_O}
        x_1 + R \leq \min_{0 \leq z \leq R^2} s_1(\tau_1 - z) = \min_{0 \leq z \leq R^2}\left\{V_R\sqrt{2(\tau_1-z)+1}+\int_0^{\tau_1-z}\tilde{J}(x)dx\right\} \quad \text{and} \quad R \leq \sqrt{\tau_1}.
    \end{equation}
    After computation, Eq.\eqref{eq:condition Lambda_O} is satisfied for \[
    R = \min \big( \alpha \sqrt{2\tau_1+1}, \sqrt{\tau_1} \big)\quad \text{with} \quad \alpha = \frac{1}{2} \min \left( \frac{V_R}{\sqrt{2}}, \sqrt{\frac{V_R}{2J_\infty}}, 1 \right).
    \]
    
    Using the regularity estimate \eqref{eq:regularite elliptique}, $q(\tau_1, x_1)$ satisfies the bound
    \[
    |q(\tau_1, x_1)| \leq \frac{K}{R^3} \|q\|_{L^1(\Lambda_0^R)}.
    \]
    Since $q(\tau, \cdot)$ is a probability density for any $\tau \in (\tau_1 - R^2, \tau_1)$, $\|q(\tau)\|_{L^1(x_1-R,x_1+R)} \leq 1$. This leads to
    \[
    |q(\tau_1, x_1)| \leq \frac{K}{R}.
    \]
    Substituting back $R$ and the relationship for $u$, we get
    \[
    \frac{|u(t_1, v_1)|}{\sqrt{2\tau_1+1}} \leq \frac{K}{\min \big( \alpha \sqrt{2\tau_1+1}, \sqrt{\tau_1} \big)}.
    \]
    Therefore
    \[
    |u(t_1, v_1)| \leq \frac{K}{\min \left( \alpha, \sqrt{\frac{\tau_1}{2\tau_1+1}} \right)} \leq K \left[ \frac{1}{\alpha} + \sqrt{\frac{2\tau_1+1}{\tau_1}} \right].
    \]
    Since $\tau_1 \geq \tau_0$ (implying $\frac{2\tau_1+1}{\tau_1} \leq \frac{2\tau_0+1}{\tau_0}$) and $\sqrt{\frac{2\tau_0+1}{\tau_0}} = \frac{2e^{t_0}}{e^{2t_0}-1}$, we can define the constant dependent only on $t_0$ and $J_\infty$.
    By writing
    \begin{equation}\label{eq:constante C}
        C(t_0, J_\infty) = K \left( \frac{1}{\alpha} + \frac{2e^{t_0}}{e^{2t_0}-1} \right),
    \end{equation}
    we obtain the desired estimate \eqref{eq: estimation u elliptique}.
\end{proof}

\begin{proof}(Proof of Theorem \ref{th:bound on N_u(t)}).\\
    The main strategy is to use Proposition \ref{prop:borne u elliptique} with a uniform initial time $t_0$ that depends solely on the initial data.
    
    Let us define the constants:
    \[
        K \coloneqq \left\| \frac{Q}{P} \right\|_\infty \left\| \frac{u(0)}{Q} \right\|_\infty \quad \text{and} \quad \beta_0 \coloneqq \frac{2}{Q(0)} K.
    \]
    We first define the maximal time up to which the solution remains bounded by $\beta_0 Q$
    \[
        t_0 = \sup \left\{ t \geq 0 \mid \forall s \in [0,t], \, u(s,\cdot) < \beta_0 Q(\cdot) \right\}.
    \]
    If $t_0 = \infty$, then $u(t) \leq \beta_0 Q$ for all $t \geq 0$, and the proposition is proved with $\beta_1 = \beta_0$.
    
    Now, assume that $t_0 < \infty$. By continuity, there exists $v_0 \leq V_F$ such that $u(t_0, v_0) = \beta_0 Q(v_0)$.
     According to Proposition \ref{prop:exemple-sur-sol}, $\beta_0 Q$ is a super-solution on $(0,\infty) \times (0, V_F)$. The maximum principle implies that the contact point cannot occur in the domain $(0, V_F)$, hence we must have $v_0 \leq 0$. Note that for $v \leq 0$, $Q(v) = Q(0)$.
    
    Additionally, the function $(t,v) \mapsto K P(t,v)$ is also a super-solution on $(0, t_0) \times (-\infty, V_F)$. Therefore, we have the comparison
    \[
        u(t_0, v_0) \leq K  P(t_0, v_0) \leq K e^{t_0}.
    \]
    Combining the equality $u(t_0, v_0) = \beta_0 Q(v_0)$ with the bound above, we obtain
    \[
        \beta_0 Q(0) \leq \beta_0 Q(v_0) = u(t_0, v_0) \leq K e^{t_0}.
    \]
    Substituting the definition of $\beta_0 = \frac{2K}{Q(0)}$, this yields
    \[
        2K \leq K e^{t_0} \implies e^{t_0} \geq 2 \implies t_0 \geq \log 2.
    \]
    
    Now, let $C(\log 2, J_\infty)$ be the constant from Proposition \ref{prop:borne u elliptique} according to \eqref{eq:constante C}. Note that the constant $C(t_0, J_\infty)$ is generally decreasing with respect to $t_0$, so $C(t_0, J_\infty) \leq C(\log 2, J_\infty)$.
    
    We define our final constant $\beta_1$ as
    \[
        \beta_1 = \max \left( \beta_0, \, \frac{2C(\log 2, J_\infty)}{Q(0)} \right).
    \]
    Similarly, we define $t_1 = \sup \{ t \geq 0 \mid u(t) < \beta_1 Q \}$.
    Suppose $t_1 < \infty$. By the definition of $\beta_1$ (since $\beta_1 \geq \beta_0$), we necessarily have $t_1 \geq t_0 \geq \log 2$.
    There exists $v_1$ such that $u(t_1, v_1) = \beta_1 Q(v_1)$. Since $\beta_1 Q$ is a super-solution on $(0, V_F)$, the contact point must satisfy $v_1 \leq 0$.
    
    However, applying Proposition \ref{prop:borne u elliptique} for $t_1 \geq \log 2$ and $v_1 \leq 0$, we have the uniform bound
    \[
        u(t_1, v_1) \leq C(\log 2, J_\infty).
    \]
    On the other hand, using the definition of $\beta_1$:
    \[
        u(t_1, v_1) = \beta_1 Q(v_1) \geq \beta_1 Q(0) \geq \frac{2C(\log 2, J_\infty)}{Q(0)} Q(0) = 2C(\log 2, J_\infty).
    \]
    This leads to the contradiction
    \[
        2C(\log 2, J_\infty) \leq C(\log 2, J_\infty),
    \]
    which is impossible since $C > 0$. Thus $t_1 = \infty$, and hence, 
    \begin{equation}\label{eq:borne u/Q}
        \fa t \geq 0, \quad u(t)\leq \beta_1Q.
    \end{equation}
    This showes the first part of the Theorem \ref{th:bound on N_u(t)}.
    Using the Hospital rule, we have
\[
\lim_{v\rightarrow V_F}\frac{u(t,v)}{Q(v)} = \frac{\partial_vu(t,V_F)}{-1}=N_u(t).
\]
    Using the bound of Proposition \eqref{eq:borne u/Q}, we obtain 
    \[
    \forall t \geq 0,\quad N_u(t) \leq \beta_1,
    \]
    which finally proves the Theorem \ref{th:bound on N_u(t)}.
\end{proof}

\begin{remark}\label{rk: beta increasing} 
Notice that $\beta_1$ is also an increasing function with respect to $J_\infty$ and $\|\frac{u^0}{Q}\|_\infty$. 
\end{remark}
\section{Proof of Theorem \ref{th:cv_long_delay} : inductive procedure}\label{sec:preuvethm}

\begin{proof}[Proof of Theorem \ref{th:cv_long_delay}]
 The core of the proof of Theorem \ref{th:cv_long_delay} relies on a mathematical induction over successive time intervals of length $d$, showing that the solution $u_d$ relaxes toward a stationary state faster than the drift (induced by the delay) changes.
 Recall that
\[
\quad \tilde{u}_\infty(v,\tau,N_{\text{ini}}) = \sum_{k=0}^{\infty}
\overline{u}\!\left(\Phi_b^{\,k}(N_{\mathrm{ini}}),v\right)
\,\mathbf{1}_{[k,k+1)}(\tau), \quad \text{and }\quad  \tilde{N}_\infty(\tau,N_{\text{ini}}) =  \sum_{k=0}^{\infty}
\Phi_b^{\,k}(N_{\mathrm{ini}})
\,\mathbf{1}_{[k,k+1)}(\tau).
\]
By convention, we take $u(t-d) = u^0$ for $t\in (0,d)$. To formalize the asymptotic behavior, we first introduce three quantities that characterize the stability and convergence for each iteration $k \geq -1$:
\begin{itemize}
    \item Uniform $L^\infty$ control:
    $A_k = \sup_{d > 0} \sup_{0 \leq t \leq d} \left\| \frac{u_d(t+kd)}{Q} \right\|_\infty$.
    \item Cumulative flux deviation:
    $C_k(\alpha) = \sup_{d > 0} \int_0^d |N_d(t + kd) - \Phi_b^{k}(N_{\text{ini}})| e^{\alpha t} dt$.
    \item Exponential relaxation of the profile: For $k \geq 0$, 
    $$D_k(\alpha) = \sup_{d > 0} \sup_{0 \leq t \leq d} \left( \|u_d(t + kd) - \overline{u}( \Phi_b^k(N_{\text{ini}}))\|_{L^1_v} e^{\alpha t} \right).$$
\end{itemize}
The theorem is established if we show that for any $M \in \mathbb{N}$, there exists a sequence of rates $\alpha_k > 0$ such that $A_k, C_k(\alpha_k),$ and $D_k(\alpha_k)$ are finite. Indeed, if these bounds hold uniformly in $d$, then for any fixed $M$:
\begin{align*}
    \int_0^M \big|\tilde{N}_d(\tau) - \tilde{N}_{\infty}(\tau)\big| \, d\tau &= \sum_{k=0}^{M-1} \int_{k}^{k+1} |\tilde{N}_d(\tau) - \Phi_b^k(N_{\text{ini}})| \, d\tau \\
    &= \frac{1}{d} \sum_{k=0}^{M-1} \int_{0}^d |N_d(t+kd) - \Phi_b^k(N_{\text{ini}})| \, dt \\
    &\leq \frac{1}{d} \sum_{k=0}^{M-1} C_{k}(\alpha_{k}) \xrightarrow[d \to \infty]{} 0.
\end{align*}
An analogous argument holds for the convergence of the density $\tilde{u}_d$ in $L^1_v$.
\medskip

\noindent \underline{ Initialization ($k=0$).} 
By convention, the solution $u_d$ is constant on the  interval $(-d, 0)$, which implies $A_{-1} = \|u^0/Q\|_\infty < \infty$. Moreover, the assumption on the initial condition ensures that the delayed flux is constant throughout the first block: 
\[
\forall d > 0, \, \forall t \in [0, d), \quad N_u^d(t - d) = N_{\text{ini}} = \Phi_b^0(N_{\text{ini}}).
\]
Consequently, the cumulative deviation is initially zero, i.e., $C_{-1}(\alpha) = 0$ for any $\alpha > 0$.
On the time interval $(0, d)$, the system thus reduces to a linear Fokker-Planck equation with a time-invariant drift $v \mapsto -v + bN_{\text{ini}}$. According to Theorem \ref{th:convergence exp u et N}, there exist constants $\alpha_0, M_0 > 0$ such that for all $t \in (0, d)$ and all $d > 0$:
\[
\|u_d(t) - \overline{u}(\Phi_b^0(N_{\text{ini}}))\|_{L^1_v} \leq M_0 e^{-\alpha_0 t} \|u^0 - \overline{u}(\Phi_b^0(N_{\text{ini}}))\|_{L^1_v}.
\]
This immediate relaxation of the density towards the stationary profile $\overline{u}(\Phi_b^0)$ ensures that $D_0(\alpha_0) < \infty$, thereby completing the initialization of the recursive procedure.

\medskip
\noindent \underline{ Inductive step.} 
Assume that for some $k \in \mathbb{N}^*$,  for all $i \in \{-1, \dots, k-1\}$, there exists $\alpha_i > 0$ such that $C_i(\alpha_i) < \infty$, $A_i < \infty$, and $D_{i^+}(\alpha_i) < \infty$, where $i^+ = \max(i, 0)$. We now extend these bounds to the interval $[kd, (k+1)d]$.

\medskip
\noindent \textit{Relaxation of the density ($D_k$):} 
The finiteness of $A_{k-1}$ ensures that the delayed neuronal flux from the previous interval is uniformly bounded:
\begin{equation}\label{eq:majoration N delai}
    \sup_{d > 0} \sup_{0 \leq t \leq d} N_u^d(t+(k-1)d) \leq A_{k-1}.
\end{equation}
For any $d > 0$, the function $t \mapsto u_d(kd+t)$ solves the linear Fokker-Planck equation on $[0, d]$ with drift $J^d(t) = bN_u^d(t+(k-1)d)$. To apply the contraction results uniformly, we extend this drift into $\tilde{J}^d$ defined by:
\begin{equation}\label{eq:J tilde}
    \tilde{J}^d(t) = 
\begin{cases} 
    J^d(t), & t < d, \\[4pt] 
    J^d(d), & t \ge d,
\end{cases}
\quad \text{and we denote} \quad J_\infty^d = \|\tilde{J}_d\|_\infty.
\end{equation}
Let $\Gamma_d$ be the evolution operator associated with $\tilde{J}^d$. By the uniqueness of the solution, we have $u_d(t+kd) = \Gamma_d(t, 0) u_d(kd)$ for $t \in [0, d]$. Since the family $(\tilde{J}^d)_{d>0}$ is uniformly bounded by $|b| A_{k-1}$, Theorem \ref{th:convergence exp u et N} ensures that there exist uniform constants $\overline{M} \geq 1$ and $\overline{\alpha} > 0$ such that, comparing with the target equilibrium $\overline{u}(\Phi_b^{k}(N_{\text{ini}})))$, we have for all $d > 0$ and $0 \leq t \leq d$:
\begin{align*}
    \|u_d(t+kd) - & \overline{u}(\Phi_b^{k}(N_{\text{ini}}))\|_{L^1_v}\leq \overline{M} \bigg(  \|u_d(kd) - \overline{u}(\Phi_b^{k}(N_{\text{ini}}))\|_{L_v^1} \\
    & + |b| \|\partial_v \overline{u}_{\Phi_b^{k}(N_{\text{ini}})}\|_{L^1_v} \int_0^t |N_u^d(s+(k-1)d) - \Phi_b^{k}(N_{\text{ini}})| e^{\alpha_{k-1} s} ds \bigg) e^{-\alpha_k' t},
\end{align*}
where we set $\alpha_k' = \min(\overline{\alpha}, \alpha_{k-1})$. The first term on the right-hand side represents the "initial error" at the start of the block, which we decompose as:
\begin{align*}
\|u_d(kd) - \overline{u}(\Phi_b^{k}(N_{\text{ini}}))\|_{L_v^1} &\leq \|u_d(kd) - \overline{u}(\Phi_b^{k-1}(N_{\text{ini}}))\|_{L_v^1} + \| \overline{u}(\Phi_b^{k-1}(N_{\text{ini}})) - \overline{u}(\Phi_b^{k}(N_{\text{ini}}))\|_{L_v^1} \\
&\leq D_{k-1}(\alpha_{k-1}) e^{-\alpha_{k-1} d} + \| \overline{u}(\Phi_b^{k-1}(N_{\text{ini}})) - \overline{u}(\Phi_b^{k}(N_{\text{ini}}))\|_{L_v^1}.
\end{align*}
Since $e^{-\alpha_{k-1} d} \leq 1$, this quantity is uniformly bounded in $d$. Furthermore, the integral term in the sensitivity estimate is bounded by $|b|C_{k-1}(\alpha_{k-1})$. Taking the supremum over $d>0$ and $t \in [0, d]$ of the quantity $\|u_d(t+kd) - \overline{u}(\Phi_b^{k}(N_{\text{ini}}))\|_{L^1} e^{\alpha_k' t}$, we conclude:
\begin{equation*}
    D_k(\alpha_k') \leq \overline{M} \left( D_{k-1}(\alpha_{k-1}) + \| \overline{u}(\Phi_b^{k-1}(N_{\text{ini}})) - \overline{u}(\Phi_b^{k}(N_{\text{ini}}))\|_{L_v^1} + |b| \|\partial_v \overline{u}_{\Phi_b^{k}(N_{\text{ini}})}\|_{L^1_v} C_{k-1}(\alpha_{k-1}) \right) < \infty,
\end{equation*}
which proves that $D_k(\alpha_k') < \infty$.

\medskip
\noindent \textit{Convergence of the flux ($C_k$):} 
To bound the neuronal flux deviation, we apply the stability estimate \eqref{eq:borne int N_u} from Theorem \ref{th:convergence exp u et N} to the solution $u_d(t+kd)$ on the interval $t \in [0, d]$. We set the target drift $J' = b\Phi_b^{k-1}$ and the rate $\delta = \frac{1}{2} \min(\overline{\alpha}, \alpha_{k-1})$. According to the Theorem, we have $\Phi_1(J') = \Phi_b^k$, and the estimate yields:
\begin{align*}
 (V_F - V_R)^2 \int_0^d |N_u^d(t+kd) - \Phi_b^{k}(N_{\text{ini}})| e^{\delta t} dt &\leq \left\|(v-V_R)^2 \big(u_d(kd)-\overline{u}(\Phi_b^{k}(N_{\text{ini}}))\big)\right\|_{L^1} \\
  &\quad + \frac{c(J^d_\infty)}{\min(\overline{\alpha}, \alpha_{k-1})-\delta} C_d \\
  &\quad + K(\Phi_b^{k}(N_{\text{ini}})) \int_0^d |b| |N_u^d(t+(k-1)d) - \Phi_b^{k-1}(N_{\text{ini}})| e^{\delta t} dt,
\end{align*}
where $C_d$ is the constant defined in \eqref{eq:inégalité exp sur u} evaluated at $T=d$. By the induction hypothesis $D_{k-1}(\alpha_{k-1}) < \infty$ and $C_{k-1}(\alpha_{k-1}) < \infty$, the term $C_d$ is uniformly bounded in $d$:
\[
C_d \leq \overline{M} \left( \lVert u_d(kd) - \overline{u}(\Phi_b^{k}(N_{\text{ini}})) \rVert_{L^1_v} + |b| \lVert \partial_v \overline{u}(\Phi_b^{k}(N_{\text{ini}})) \rVert_{L^1_v} C_{k-1}(\alpha_{k-1}) \right) < \infty.
\]
The first term on the right-hand side is bounded via a second-order moment estimate:
\begin{align*}
\|(v-V_R)^2 \big(u_d(kd)-\overline{u}(\Phi_b^k(N_{\text{ini}}))\big)\|_{L^1}
&\leq \kappa' \Big( \|u_d(kd)-\overline{u}(\Phi_b^{k-1}(N_{\text{ini}}))\|_{L_v^1} \\
&+ \|\overline{u}(\Phi_b^{k-1}(N_{\text{ini}})) -\overline{u}(\Phi_b^{k}(N_{\text{ini}}))\|_{L_v^1} \Big) \\
&\leq \kappa' \Big( D_{k-1}(\alpha_{k-1}) + \|\overline{u}(\Phi_b^{k-1}(N_{\text{ini}})) -\overline{u}(\Phi_b^{k}(N_{\text{ini}}))\|_{L_v^1} \Big),
\end{align*}
which is bounded uniformly in $d$. Finally, the last integral is bounded by $|b|K(\Phi_b^{k}(N_{\text{ini}}))C_{k-1}(\alpha_{k-1})$ since $\delta \leq \alpha_{k-1}$. This concludes that:
\[
\sup_{d>0} \int_0^d |N_u^d(t+kd) - \Phi_b^{k}(N_{\text{ini}})| e^{\delta t} dt < \infty,
\]
proving $C_k(\delta) < \infty$.

\medskip
\noindent \textit{Uniform bound ($A_k$):} Finally, we show that $A_k < \infty$. Using the function $\tilde{J}^d$ defined in \eqref{eq:J tilde} and Theorem \ref{th:bound on N_u(t)}, we have
\[
\sup_{0 \leq t \leq d} \bigg\|\frac{u_d(t+kd)}{Q}\bigg\|_\infty = \sup_{0 \leq t \leq d} \bigg\|\frac{\Gamma_d(t, 0) u_d(kd)}{Q}\bigg\|_\infty \leq \beta\left(J_\infty^d, \Big\|\frac{u_d(kd)}{Q}\Big\|_\infty\right).
\]
By the induction hypothesis, $\|u(kd)/Q\|_\infty \leq A_{k-1}$ and $J_\infty^d \leq A_{k-1}$.
From Remark \ref{rk: beta increasing}, we obtain the uniform bound
\[
A_k \leq \sup_{d>0} \beta_1\left(J_\infty^d, \Big\|\frac{u_d(kd)}{Q}\Big\|_\infty\right) \leq \beta_1(A_{k-1}, A_{k-1}) < \infty.
\]
\medskip
\noindent \underline{Conclusion.} 
By induction, the sequences $A_k, C_k, D_k$ are finite for all $k \leq M$. This confirms that on any finite time interval $[0, M]$, the rescaled solution $\tilde{u}_d$ and flux $\tilde{N}_d$ converge toward the step functions governed by the iterations of $\Phi_b$.
\end{proof}

\section{Periodic Limits and numerical simulation}
Corollary \ref{cor:double_limit_correct} enables two distinct dynamics of the limit function $\tilde{N}_\infty(\cdot)=\lim_{d\to\infty} N_d(\cdot \times d)$ for $b<0$:
\begin{enumerate}
    \item Convergence towards a unique stationary state $\overline{N}$ that coincide with the limit of the sequence $(\Phi_b^k(N_{\text{ini}}))_{k\geq 0}$.
    \item Convergence towards the 2-periodic function $N_{per}$ that is characterized by the $2-$cycle limit of the sequence $(\Phi_b^k(N_{\text{ini}}))_{k\geq 0}$.
\end{enumerate}
We want to emphasize these distinct behaviors with numerical simulations. 
We used an upwind scheme as follows.
The discrete version of $u(t,x)$ is $u(n\Delta t, j\Delta x) \equiv u^n_j$ and the discretized and finite domain is
$$
\mathcal{D}=\{0, \Delta t, ..., N_t\Delta t\}\times \{V_{min}, V_{min}+\Delta x,...,V_{min}+N_x\Delta x\}.
$$
With $N_x$ satisfying $N_x= \frac{V_F-V_{min}}{\Delta x}$. The discrete delay $D$ satisfies $D=\big[\frac{d}{\Delta t}\big]$. Notice that $(n\Delta t, V_{min}+ j\Delta x)\in \mathcal{D}\Leftrightarrow (n,j)\in \{0,...,N_t\}\times\{0,...,N_x\}$.
Then, 
\begin{equation}
    \frac{u_j^{n+1}-u_j^n}{\Delta t} =  \frac{(\mu_j^n)_+u_j^n-(\mu_{j-1}^n)_+u_{j-1}^{n-1}}{\Delta x}+ \frac{(\mu_{j+1}^n)_-u_{j+1}^n-(\mu_j^n)_-u_{j}^{n-1}}{\Delta x}+\frac{u^n_{j+1}-2u^n_j+u^n_{j-1}}{(\Delta x)^2}
\end{equation}
For any $\mu \in \R, \mu_+ = \max(\mu,0)$ and $\mu_- = \min(\mu,0)$ and 
\begin{equation}
    \mu_j^n = bN^{n-D}-(V_{min}+j\Delta x)\quad\text{with}\quad N^{n} = \frac{4u^n_{N_x-1}-u^n_{N_x-2}}{2\Delta x}.
\end{equation}.
We chose numerical values $V_F, V_R, V_{min}$, $\Delta t$ and $\Delta x$ for stability as follows
\begin{enumerate}
    \item $V_R = 0.5, V_F = 2.0$
    \item $V_{min} = -10.0$
    \item $\Delta t = 2.10^{-5}$
    \item $\Delta x = 10^{-2}$
\end{enumerate}
Since the convergence shown in Theorem \ref{th:cv_long_delay} is local in time, an increase of the error term $\|\tilde{u}_d(\cdot)-\tilde{u}_\infty\|$ on every interval $[k,k+1]$ is expected. This is naturally caused by the time needed for the solution $\Tilde{u}_d$ to move continuously from close to $\overline{u}(\Phi^k_b(N_{\text{ini}}))$ to close to $\overline{u}(\Phi^{k+1}_b(N_{\text{ini}}))$ on $[k,k+1]$ and initially from $u^0$ to $\overline{u}(N_{\text{ini}}) $ on $[0,1]$. Hence, to highlihght the convergence of Theorem \ref{th:cv_long_delay}, we choose a smooth function with its mass mainly distributed close to $(V_R,V_F)$, like $\overline{u}(N_{\text{ini}}) $ in order to get a rapid convergence towards the first pseudo-equilibria. More concretely, we chose $u^0$ such that 
\[
u^0(x) \approx \sqrt{\frac{\pi}{4}} \exp(-4x^2).
\]
\begin{figure}[h]
    \centering
    \includegraphics[width=0.4\textwidth]{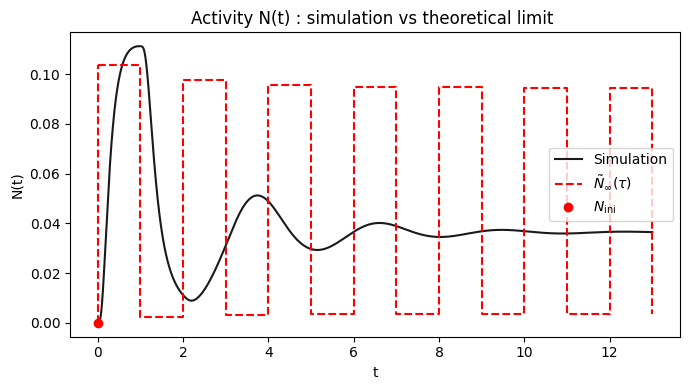}
    \\[-7pt]
    \caption{$N_d(t)$ and $N_\infty(d\,\cdot)$ when $d(=1)$ is small and $b(=-15)<b^*<0$.}
    \label{fig:1}
\end{figure}
Figure~\ref{fig:1} shows the simulation of the delayed Fokker-Planck equation with delay $d=1$ and $b=-15<b^*$. Under this condition, the simulation of $N(t)$ converges towards the stationary state while the sequence $(\Phi_b^n(N_{\text{ini}}))_{n\geq 0}$ converges to the $2$-cycle $(N_+,N_-)$. This emphasizes that a large delay is necessary for the appearance of periodic solutions with very negative connectivity $b<b^*$.\\
\begin{figure}[h]
    \centering
    \includegraphics[width=0.4\textwidth]{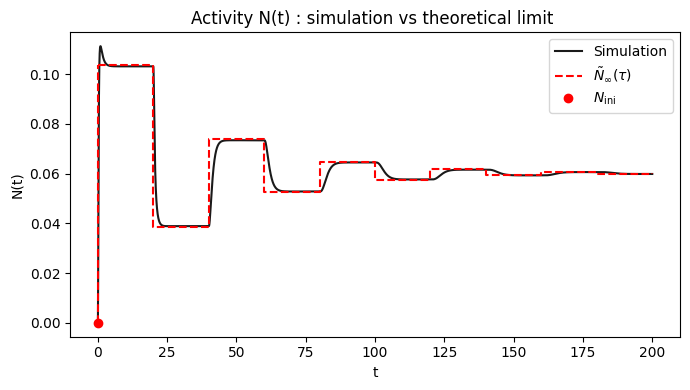}
    \\[-7pt]
    \caption{$N_d(t)$ and $N_\infty(d\,\cdot)$ when $d(=20)$ is large and $b^*<b(=-5)<0$.}
    \label{fig:2}
\end{figure}
Figure~\ref{fig:2} presents numerical evidence of point~1 of Corollary~\ref{cor:double_limit_correct}. By choosing a large delay ($d=20$) and a small connectivity coefficient $b\in (b^*,0)$, both the simulation of $N(t)$ and the sequence $\Phi_b^n(N_{\text{ini}})$ converge to the same value $\overline{N}$.
Numerical simulation shows an even stronger result: since $d<\infty$, we have numerically
\[
\lim_{M\to\infty}
\int_M^{M+1}
\bigl|
\tilde N_d(\tau) - \overline N
\bigr|
\, d\tau
=0.
\]
A rigorous proof of such a result, the global stability of the stationnary state in the delayed NNLIF for weakly non linear regime with exponential rate can be found in Section~5 of \cite{IRSS2022} as well as in \cite{zbMATH08030978}.
\begin{figure}[h]
    \centering
    \includegraphics[width=0.4\textwidth]{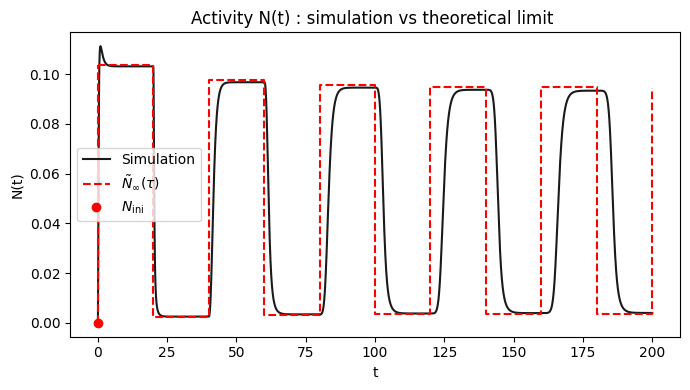}
    \\[-7pt]
    \caption{$N_d(t)$ and $N_\infty(d\,\cdot)$ when $d(=20)$ is large and $b(=-15)<b^*<0$.}
    \label{fig:3}
\end{figure}
\\

Finally, Figure~\ref{fig:3} provides numerical evidence of point~2 of Corollary~\ref{cor:double_limit_correct}, stating that when $b<b^*$ (here $b=-15$) and $d$ is large (here $d=20$), $\tilde{N}_d(t)$ is close to the periodic function $N_{\text{per}}$. Notice that the simulation highlights the fact that the convergence of $N_d$ is local. Indeed, in Figure~\ref{fig:3}, the phase transition of the function $N(t)$, going from $\Phi_b^k(N_{\text{ini}})$ to $\Phi_b^{k+1}(N_{\text{ini}})$, is amplified on every interval  $[kd,(k+1)d]$.

\section{Conclusion}
Our study covers the local convergence of the solution to the integrate-and-fire model when the delay $d$ goes to $+\infty$.  
The Doeblin–Harris method was used to obtain an exponential contraction of the unstationary evolution operator $\Gamma(t,s)$, associated with the drift $v\mapsto -v-J(t)$. Then, using a perturbation argument, we establish that if the term $J(t)$ satisfies some type of convergence stated as \eqref{eq: conv J}, then it is also the case for $N(t)$.
Using an upper solution of \eqref{eq:FP linéaire instat} and parabolic regularization, we propagate the bound on $t\mapsto J(t)$ to $t\mapsto N(t)$, which finally enables us to propagate the exponential convergence of the solution $(u_d,N_d)$ from $[kd,(k+1)d]$ to $[(k+1)d,(k+2)d]$ and conclude the proof of Theorem~\ref{th:cv_long_delay}.

One may wonder whether the local convergence still holds if $b>0$. This would require proving Theorem~\ref{th:convergence doeblin-harris}, \ref{th:convergence exp u et N} and \ref{th:bound on N_u(t)} in the case where $J(t)\leq 0$ and $|J(t)|\leq J_\infty$. Even if Theorem~\ref{th:convergence doeblin-harris} and Theorem~\ref{th:convergence exp u et N} can be proved by the same method, due to the lack of an upper solution in the excitatory case, proving Theorem~\ref{th:bound on N_u(t)} is still an open problem.

In any case, the asymptotic dynamics would be different; \cite{CaceresCanizo2024} 
showed that in the excitatory regime, the sequence $(\Phi^k(N_{\text{ini}}))_{k\geq 0}$ 
exhibits two distinct behaviors: either it is monotone and convergent, or it satisfies 
$\lim_{k\rightarrow\infty}\Phi^k(N_{\text{ini}})=+\infty$. 
We may also ask whether the local Cesàro mean convergence still holds when the space is endowed with a stronger norm, such as $\|\cdot\|_{L^2(\overline{p}(0)^{-1})}$ or $\|\cdot\|_\infty$ instead of $\|\cdot\|_{L^1_v}$. This question remains open. Addressing it would require exploiting the regularizing effects studied in \cite{zbMATH08030978}, together with exponential convergence results based on entropy dissipation.

However, in this stronger framework, Theorem~\ref{th:convergence exp u et N} would require a completely different proof. Indeed, the available exponential convergence results with $\|\cdot\|_{L^2(\overline{p}(0)^{-1})}$ or $\|\cdot\|_\infty$ apply only to semigroups of the form $e^{tL}$, where the operator $L$ is time-independent, whereas the operator $\Gamma(t,s)$ considered here is time-dependent.\\

\textbf{Acknowledgment.}This work was supported by the Fondation Simone et Cino Del Duca, Institut de France

\bibliographystyle{siam}  
\bibliography{BibDPSZ}

@article{zbMATH08023785,
 author = {Carrillo, Jos{\'e} A. and Roux, Pierre},
 title = {Nonlinear partial differential equations in neuroscience: from modeling to mathematical theory},
 fjournal = {M\(^3\)AS. Mathematical Models \& Methods in Applied Sciences},
 journal = {Math. Models Methods Appl. Sci.},
 issn = {0218-2025},
 volume = {35},
 number = {2},
 pages = {403--584},
 year = {2025},
 language = {English},
 doi = {10.1142/S0218202525400044},
 keywords = {35-02,92-02,35Q49,35Q84,35B40,65C30},
 zbMATH = {8023785},
 Zbl = {1562.35002}
}

@misc{arXiv:2601.19282,
 author = {Perthame, Beno{\^{\i}}t and Rieutord, Cl{\'e}ment and Salort, Delphine},
 title = {A {Fokker}-{Planck} equation with superlinear drift at infinity for {Integrate}-and-{Fire} model},
 year = {2026},
 howpublished = {Preprint, {arXiv}:2601.19282 [math.{AP}] (2026)},
 url = {https://arxiv.org/abs/2601.19282},
 arXiv = {arXiv:2601.19282}
}

@article{zbMATH08030978,
 author = {C{\'a}ceres, Mar{\'{\i}}a J. and Ca{\~n}izo, Jos{\'e} A. and Ramos-Lora, Alejandro},
 title = {On the asymptotic behavior of the {NNLIF} neuron model for general connectivity strength},
 fjournal = {Communications in Mathematical Physics},
 journal = {Commun. Math. Phys.},
 issn = {0010-3616},
 volume = {406},
 number = {5},
 pages = {55},
 note = {Id/No 115},
 year = {2025},
 language = {English},
 doi = {10.1007/s00220-025-05287-5},
 keywords = {35Q84,92C20,35B40,35B65,45D05,35A01,35A02,35R60},
 zbMATH = {8030978},
 Zbl = {1565.35328}
}


@article{zbMATH08113917,
 author = {Perthame, Beno{\^{\i}}t and Rieutord, Cl{\'e}ment and Salort, Delphine},
 title = {Strongly nonlinear age-structured equation, time-elapsed model and large delays},
 fjournal = {Journal of Mathematical Biology},
 journal = {J. Math. Biol.},
 issn = {0303-6812},
 volume = {91},
 number = {5},
 pages = {29},
 note = {Id/No 65},
 year = {2025},
 language = {English},
 doi = {10.1007/s00285-025-02294-x},
 keywords = {35B10,35B40,35Q49,92D25},
 zbMATH = {8113917}
}


@book{zbMATH01061253,
 author = {Lieberman, Gary M.},
 title = {Second order parabolic differential equations},
 isbn = {981-02-2883-X},
 year = {1996},
 publisher = {Singapore: World Scientific},
 language = {English},
 keywords = {35-02,35K20,35Bxx,35K55,35K10},
 zbMATH = {1061253}, 
 Zbl = {0884.35001}
}

@article{BretteG2005,
author = {Brette, Romain and Gerstner, Wulfram},
title = {Adaptive Exponential Integrate-and-Fire Model as an Effective Description of Neuronal Activity},
journal = {Journal of Neurophysiology},
volume = {94},
number = {5},
pages = {3637-3642},
year = {2005},
doi = {10.1152/jn.00686.2005},
    note ={PMID: 16014787},
URL = {https://doi.org/10.1152/jn.00686.2005},
 abstract = { We introduce a two-dimensional integrate-and-fire model that combines an exponential spike mechanism with an adaptation equation, based on recent theoreticl findings. We describe a systematic method to estimate its parameters with simple electrophysiological protocols (current-clamp injection of pulses and ramps) and apply it to a detailed conductance-based model of a regular spiking neuron. Our simple model predicts correctly the timing of 96\% of the spikes (Â±2 ms) of the detailed model in response to injection of noisy synaptic conductances. The model is especially reliable in high-conductance states, typical of cortical activity in vivo, in which intrinsic conductances were found to have a reduced role in shaping spike trains. These results are promising because this simple model has enough expressive power to reproduce qualitatively several electrophysiological classes described in vitro. }
}

@misc{CR2025,
      title={Nonlinear partial differential equations in neuroscience: from modelling to mathematical theory}, 
      author={José A Carrillo and Pierre Roux},
      year={2025},
      eprint={2501.06015},
      archivePrefix={arXiv},
      primaryClass={math.AP},
      url={https://arxiv.org/abs/2501.06015}, 
}


@article{BL_2003,
    author = {Brunel, Nicolas and Latham, Peter E.},
    title = {Firing Rate of the Noisy Quadratic Integrate-and-Fire Neuron},
    journal = {Neural Computation},
    volume = {15},
    number = {10},
    pages = {2281-2306},
    year = {2003},
    month = {10},
    abstract = {We calculate the firing rate of the quadratic integrate-and-fire neuron in response to a colored noise input current. Such an input current is a good approximation to the noise due to the random bombardment of spikes, with the correlation time of the noise corresponding to the decay time of the synapses. The key parameter that determines the firing rate is the ratio of the correlation time of the colored noise, τs, to the neuronal time constant, τm. We calculate the firing rate exactly in two limits: when the ratio, τs/τm, goes to zero (white noise) and when it goes to infinity. The correction to the short correlation time limit is O(τs/τm), which is qualitatively different from that of the leaky integrate-and-fire neuron, where the correction is O(√τs/τm). The difference is due to the different boundary conditions of the probability density function of the membrane potential of the neuron at firing threshold. The correction to the long correlation time limit is O(τm/τs). By combining the short and long correlation time limits, we derive an expression that provides a good approximation to the firing rate over the whole range of τs/τm in the suprathreshold regime—that is, in a regime in which the average current is sufficient to make the cell fire. In the subthreshold regime, the expression breaks down somewhat when τs becomes large compared to τm.},
    issn = {0899-7667},
    doi = {10.1162/089976603322362365},
    url = {https://doi.org/10.1162/089976603322362365},
    eprint = {https://direct.mit.edu/neco/article-pdf/15/10/2281/815765/089976603322362365.pdf},
}

@unpublished{PRS3,
author = {Perthame, Beno{\^{\i}}t and Rieutord, Cl{\'e}ment and Salort, Delphine},
title = {Nonlinear {F}okker-{P}lanck equation with superlinear drift},
year      = {In preparation},
} 

@misc{DPSZ2024,
      title={Noisy integrate-and-fire equation: continuation after blow-up}, 
      author={Xu'An Dou and Benoît Perthame and Delphine Salort and Zhennan Zhou},
      year={2024},
      eprint={2409.14749},
      archivePrefix={arXiv},
      primaryClass={math.AP},
      url={https://arxiv.org/abs/2409.14749}, 
}

@incollection{ledoux,
  author    = {Ledoux, M.},
  title     = {The concentration of measure phenomenon},
  booktitle = {AMS math. surveys and monographs},
  editor    = {},
  publisher = {AMS},
  pages     = {},
  year      = {2001},
    address = {},
    volume  = {89},
    edition = {}
}

@misc{Harris_1956,
 author = {Harris, T. E.},
 title = {The existence of stationary measures for certain {Markov} processes},
 year = {1956},
 language = {English},
 howpublished = {Proc. 3rd {Berkeley} {Sympos}. {Math}. {Statist}. {Probability} 2, 113-124 (1956).},
 zbMATH = {3121180},
 Zbl = {0072.35201}
}

@incollection{HairerM-Doeblin,
 author = {Hairer, Martin and Mattingly, Jonathan C.},
 title = {Yet another look at {Harris}' ergodic theorem for {Markov} chains},
 booktitle = {Seminar on stochastic analysis, random fields and applications VI. Centro Stefano Franscini, Ascona (Ticino), Switzerland, May 19--23, 2008.},
 isbn = {978-3-0348-0020-4; 978-3-0348-0021-1},
 pages = {109--117},
 year = {2011},
 publisher = {Basel: Birkh{\"a}user},
 language = {English},
 doi = {10.1007/978-3-0348-0021-1_7},
 keywords = {60J05,37A30},
 zbMATH = {6071108},
 Zbl = {1248.60082}
}

@article{Figalli2008,
 author = {Figalli, Alessio},
 title = {Existence and uniqueness of martingale solutions for {SDEs} with rough or degenerate coefficients},
 fjournal = {Journal of Functional Analysis},
 journal = {J. Funct. Anal.},
 issn = {0022-1236},
 volume = {254},
 number = {1},
 pages = {109--153},
 year = {2008},
 language = {English},
 doi = {10.1016/j.jfa.2007.09.020},
 keywords = {60H10,34F05,60G44},
 zbMATH = {5234457},
 Zbl = {1169.60010}
}

@article{DiPL89,
 author = {DiPerna, R. J. and Lions, P. L.},
 title = {Ordinary differential equations, transport theory and {Sobolev} spaces},
 fjournal = {Inventiones Mathematicae},
 journal = {Invent. Math.},
 issn = {0020-9910},
 volume = {98},
 number = {3},
 pages = {511--547},
 year = {1989},
 language = {English},
 doi = {10.1007/BF01393835},
 keywords = {34G20,35D05,35D10},
 url = {https://eudml.org/doc/143741},
 zbMATH = {4140224},
 Zbl = {0696.34049}
}

@article {Lebris_Lions_2008,
    AUTHOR = {Le Bris, C. and Lions, P.-L.},
     TITLE = {Existence and uniqueness of solutions to {F}okker-{P}lanck
              type equations with irregular coefficients},
   JOURNAL = {Comm. Partial Differential Equations},
  FJOURNAL = {Communications in Partial Differential Equations},
    VOLUME = {33},
      YEAR = {2008},
    NUMBER = {7-9},
     PAGES = {1272--1317},
      ISSN = {0360-5302},
   MRCLASS = {35K20 (35B35 35B45 60H15 82C70)},
  MRNUMBER = {2450159},
MRREVIEWER = {C\'{e}dric Villani},
       DOI = {10.1080/03605300801970952},
       URL = {https://doi-org.accesdistant.sorbonne-universite.fr/10.1080/03605300801970952},
}

@book{brezis2010functional,
  title={Functional analysis, Sobolev spaces and partial differential equations},
  author={Brezis, Haim},
  year={2010},
  publisher={Springer Science \& Business Media}
}

@book{AmannBook,
 author = {Amann, Herbert},
 title = {Linear and quasilinear parabolic problems. {Vol}. 1: {Abstract} linear theory},
 fseries = {Monographs in Mathematics},
 series = {Monogr. Math., Basel},
 issn = {1017-0480},
 volume = {89},
 isbn = {3-7643-5114-4},
 year = {1995},
 publisher = {Basel: Birkh{\"a}user},
 language = {English},
 keywords = {35-02,35G10,35K25,47D06,58D25},
 zbMATH = {742737},
 Zbl = {0819.35001}
}

@article {NDB_2025,
    AUTHOR = {Nabti, A. and Djilali, S. and Belghit, M.},
     TITLE = {Dynamics of a Double Age-Structured {SEIRI} Epidemic Model},
   JOURNAL = {Acta Appl. Math.},
  FJOURNAL = {Acta Applicandae Mathematicae},
    VOLUME = {In press},
      YEAR = {2025},
}

@book{murray1,
	address = {New York, NY},
	author = {Murray, J D},
	date-added = {2018-11-17 19:27:35 +0100},
	date-modified = {2018-11-17 19:27:35 +0100},
	edition = {3rd},
	publisher = {Springer Verlag},
	title = {Mathematical Biology: I. An Introduction},
	year = {2002}}

@book {murray2,
    AUTHOR = {Murray, J. D.},
     TITLE = {Mathematical biology. {II}},
    SERIES = {Interdisciplinary Applied Mathematics},
    VOLUME = {18},
   EDITION = {Third},
      NOTE = {Spatial models and biomedical applications},
 PUBLISHER = {Springer-Verlag},
   ADDRESS = {New York},
      YEAR = {2003},
     PAGES = {xxvi+811},
      ISBN = {0-387-95228-4},
   MRCLASS = {92-02 (92B05 92C05 92D30)},
  MRNUMBER = {MR1952568 (2004b:92001)},
MRREVIEWER = {Trachette L. Jackson},
}

@article {DH1,
	AUTHOR = {Dumont, Gr\'{e}gory and Henry, Jacques and Tarniceriu, Carmen Oana},
	TITLE = {Noisy threshold in neuronal models: connections with the noisy
	leaky integrate-and-fire model},
	JOURNAL = {J. Math. Biol.},
	FJOURNAL = {Journal of Mathematical Biology},
	VOLUME = {73},
	YEAR = {2016},
	NUMBER = {6-7},
	PAGES = {1413--1436},
	ISSN = {0303-6812},
	MRCLASS = {92C20 (35Q92)},
	MRNUMBER = {3556838},
	DOI = {10.1007/s00285-016-1002-8},
	URL = {https://doi.org/10.1007/s00285-016-1002-8},
}

@article{caceres2018global,
  title={Global-in-time classical solutions and qualitative properties for the NNLIF neuron model with synaptic delay},
  author={C{\'a}ceres, M.J. and Roux, P. and Salort, D. and Schneider, R.},
  journal={arXiv preprint arXiv:1806.01934},
  year={2018},
}

@article {IRSS2022,
    AUTHOR = {Ikeda, Kota and Roux, Pierre and Salort, Delphine and Smets,
              Didier},
     TITLE = {Theoretical study of the emergence of periodic solutions for
              the inhibitory {NNLIF} neuron model with synaptic delay},
   JOURNAL = {Math. Neurosci. Appl.},
  FJOURNAL = {Mathematical Neuroscience and Applications},
    VOLUME = {2},
      YEAR = {2022},
     PAGES = {Art. No. 4, 37},
   MRCLASS = {92B20 (35B10 35K20 92B25)},
  MRNUMBER = {4665143},
MRREVIEWER = {Shangjiang Guo},
}

@Article{FonteS2022,
 Author = {Fonte, Claudia and Schmutz, Valentin},
 Title = {Long time behavior of an age- and leaky memory-structured neuronal population equation},
 FJournal = {SIAM Journal on Mathematical Analysis},
 Journal = {SIAM J. Math. Anal.},
 ISSN = {0036-1410},
 Volume = {54},
 Number = {4},
 Pages = {4721--4756},
 Year = {2022},
 Language = {English},
 DOI = {10.1137/21M1428571},
 Keywords = {35B40,35F15,35F20,35Q49,92B20},
 zbMATH = {7577529},
 Zbl = {1496.35075}
}

@incollection {MetzDiekmann_LN,
    AUTHOR = {Metz, J. A. J. and Diekmann, O.},
     TITLE = {A gentle introduction to structured population models: three
              worked examples},
 BOOKTITLE = {The dynamics of physiologically structured populations
              ({A}msterdam, 1983)},
    SERIES = {Lecture Notes in Biomath.},
    VOLUME = {68},
     PAGES = {3--45},
 PUBLISHER = {Springer, Berlin},
      YEAR = {1986},
   MRCLASS = {92A15},
  MRNUMBER = {860960},
       DOI = {10.1007/978-3-662-13159-6\_1},
       URL = {https://doi-org.accesdistant.sorbonne-universite.fr/10.1007/978-3-662-13159-6_1},
}
@article {GMC79,
    AUTHOR = {Gurtin, Morton E. and MacCamy, Richard C.},
     TITLE = {Some simple models for nonlinear age-dependent population
              dynamics},
   JOURNAL = {Math. Biosci.},
  FJOURNAL = {Mathematical Biosciences},
    VOLUME = {43},
      YEAR = {1979},
    NUMBER = {3-4},
     PAGES = {199--211},
      ISSN = {0025-5564},
   MRCLASS = {92A15},
  MRNUMBER = {527566},
MRREVIEWER = {K. E. Swick},
       DOI = {10.1016/0025-5564(79)90049-X},
       URL = {https://doi-org.accesdistant.sorbonne-universite.fr/10.1016/0025-5564(79)90049-X},
}


@incollection {MichelNL07,
    AUTHOR = {Michel, P.},
     TITLE = {General relative entropy in a nonlinear {M}c{K}endrick model},
 BOOKTITLE = {Stochastic analysis and partial differential equations},
    SERIES = {Contemp. Math.},
    VOLUME = {429},
     PAGES = {205--232},
 PUBLISHER = {Amer. Math. Soc., Providence, RI},
      YEAR = {2007},
   MRCLASS = {35F30 (35Q80 92D25)},
  MRNUMBER = {2391537},
       DOI = {10.1090/conm/429/08238},
}

@article {MMW2010,
    AUTHOR = {Magal, P. and McCluskey, C. C. and Webb, G. F.},
     TITLE = {Lyapunov functional and global asymptotic stability for an
              infection-age model},
   JOURNAL = {Appl. Anal.},
  FJOURNAL = {Applicable Analysis. An International Journal},
    VOLUME = {89},
      YEAR = {2010},
    NUMBER = {7},
     PAGES = {1109--1140},
      ISSN = {0003-6811},
   MRCLASS = {34K20 (34D23 35B25 35Q92 47D06 47N20 92D30)},
  MRNUMBER = {2674945},
MRREVIEWER = {Pauline van den Driessche},
       DOI = {10.1080/00036810903208122},
}

@book{GLP_book,
AUTHOR = {Galves, Antonio and L{\"o}cherbach, Eva and Pouzat, Christophe},
     TITLE = {Probabilistic Spiking Neuronal Nets},
    SERIES = {Lecture Notes on Mathematical Modelling in the Life Sciences},
 PUBLISHER = {Springer, Cham},
      YEAR = {2024},
     PAGES = {xv+199},
      ISBN = {978-3-031-68408-1},
       DOI = {10.1007/978-3-031-6},
}

@book {EvansBook,
    AUTHOR = {Evans, Lawrence C.},
     TITLE = {Partial differential equations},
    SERIES = {Graduate Studies in Mathematics},
    VOLUME = {19},
   EDITION = {Second},
 PUBLISHER = {American Mathematical Society, Providence, RI},
      YEAR = {2010},
     PAGES = {xxii+749},
      ISBN = {978-0-8218-4974-3},
   MRCLASS = {35-01},
  MRNUMBER = {2597943 (2011c:35002)},
MRREVIEWER = {Diego M. Maldonado},
}

@book {IannelliBook,
    AUTHOR = {Iannelli, Mimmo and Milner, Fabio},
     TITLE = {The basic approach to age-structured population dynamics},
    SERIES = {Lecture Notes on Mathematical Modelling in the Life Sciences},
      NOTE = {Models, methods and numerics},
 PUBLISHER = {Springer, Dordrecht},
      YEAR = {2017},
     PAGES = {xii+350},
      ISBN = {978-94-024-1145-4; 978-94-024-1146-1},
   MRCLASS = {92D25 (35Q92 65Mxx)},
  MRNUMBER = {3700352},
       DOI = {10.1007/978-94-024-1146-1},
       URL = {https://doi-org.accesdistant.sorbonne-universite.fr/10.1007/978-94-024-1146-1},
}

@article{SCHWALGER2019,
title = {Mind the last spike - firing rate models for mesoscopic populations of spiking neurons},
journal = {Current Opinion in Neurobiology},
volume = {58},
pages = {155-166},
year = {2019},
note = {Computational Neuroscience},
issn = {0959-4388},
doi = {https://doi.org/10.1016/j.conb.2019.08.003},
url = {https://www.sciencedirect.com/science/article/pii/S095943881930039X},
author = {Tilo Schwalger and Anton V Chizhov},
abstract = {The dominant modeling framework for understanding cortical computations are heuristic firing rate models. Despite their success, these models fall short to capture spike synchronization effects, to link to biophysical parameters and to describe finite-size fluctuations. In this opinion article, we propose that the refractory density method (RDM), also known as age-structured population dynamics or quasi-renewal theory, yields a powerful theoretical framework to build rate-based models for mesoscopic neural populations from realistic neuron dynamics at the microscopic level. We review recent advances achieved by the RDM to obtain efficient population density equations for networks of generalized integrate-and-fire (GIF) neurons â a class of neuron models that has been successfully fitted to various cell types. The theory not only predicts the nonstationary dynamics of large populations of neurons but also permits an extension to finite-size populations and a systematic reduction to low-dimensional rate dynamics. The new types of rate models will allow a re-examination of models of cortical computations under biological constraints.}
}

@article {MPS2019,
    AUTHOR = {Moussa, Ayman and Perthame, Beno\^{\i}t and Salort, Delphine},
     TITLE = {Backward parabolicity, cross-diffusion and {T}uring
              instability},
   JOURNAL = {J. Nonlinear Sci.},
  FJOURNAL = {Journal of Nonlinear Science},
    VOLUME = {29},
      YEAR = {2019},
    NUMBER = {1},
     PAGES = {139--162},
      ISSN = {0938-8974},
   MRCLASS = {35K57 (35B10 35B36 35K51 35Q92)},
  MRNUMBER = {3908864},
       DOI = {10.1007/s00332-018-9480-z},
       URL = {https://doi-org.accesdistant.sorbonne-universite.fr/10.1007/s00332-018-9480-z},
}

@article {alikakos79,
    AUTHOR = {Alikakos, N. D.},
     TITLE = {{$L^{p}$} bounds of solutions of reaction-diffusion
              equations},
   JOURNAL = {Comm. Partial Differential Equations},
  FJOURNAL = {Communications in Partial Differential Equations},
    VOLUME = {4},
      YEAR = {1979},
    NUMBER = {8},
     PAGES = {827--868},
      ISSN = {0360-5302},
   MRCLASS = {35K55 (80A20)},
  MRNUMBER = {537465},
MRREVIEWER = {Mieczyslaw Altman},
       DOI = {10.1080/03605307908820113},
       URL = {https://doi-org.accesdistant.sorbonne-universite.fr/10.1080/03605307908820113},
}

@misc{bernou2021,
      title={Hypocoercivity for kinetic linear equations in bounded domains with general Maxwell boundary condition}, 
      author={Armand Bernou and Kleber Carrapatoso and Stéphane Mischler and Isabelle Tristani},
      year={In press},
      VOLUME = {},
      NUMBER = {},
      JOURNAL={Annales de l'Institut Henri Poincar{\'e} Analyse Non Lin{\'e}aire},
      primaryClass={math.AP}
}

@article {Monmarche2014,
    AUTHOR = {Monmarch\'{e}, Pierre},
     TITLE = {Hypocoercive relaxation to equilibrium for some kinetic
              models},
   JOURNAL = {Kinet. Relat. Models},
  FJOURNAL = {Kinetic and Related Models},
    VOLUME = {7},
      YEAR = {2014},
    NUMBER = {2},
     PAGES = {341--360},
      ISSN = {1937-5093},
   MRCLASS = {60J99 (35B40 35R60 60H10)},
  MRNUMBER = {3195078},
       DOI = {10.3934/krm.2014.7.341},
       URL = {https://doi.org/10.3934/krm.2014.7.341},
}

@article {MR1787105,
    AUTHOR = {Desvillettes, L. and Villani, C.},
     TITLE = {On the trend to global equilibrium in spatially inhomogeneous
              entropy-dissipating systems: the linear {F}okker-{P}lanck
              equation},
   JOURNAL = {Comm. Pure Appl. Math.},
  FJOURNAL = {Communications on Pure and Applied Mathematics},
    VOLUME = {54},
      YEAR = {2001},
    NUMBER = {1},
     PAGES = {1--42},
      ISSN = {0010-3640},
   MRCLASS = {82C40 (35B40 47N55)},
  MRNUMBER = {1787105},
MRREVIEWER = {Carlo Cercignani},
       DOI = {10.1002/1097-0312(200101)54:1<1::AID-CPA1>3.0.CO;2-Q},
       URL =
              {https://doi.org/10.1002/1097-0312(200101)54:1<1::AID-CPA1>3.0.CO;2-Q},
}

@article {MR4071827,
    AUTHOR = {Dujardin, Guillaume and H\'{e}rau, Fr\'{e}d\'{e}ric and Lafitte, Pauline},
     TITLE = {Coercivity, hypocoercivity, exponential time decay and
              simulations for discrete {F}okker-{P}lanck equations},
   JOURNAL = {Numer. Math.},
  FJOURNAL = {Numerische Mathematik},
    VOLUME = {144},
      YEAR = {2020},
    NUMBER = {3},
     PAGES = {615--697},
      ISSN = {0029-599X},
   MRCLASS = {35Q84 (35B40 35Q83)},
  MRNUMBER = {4071827},
MRREVIEWER = {Luigi Nocera},
       DOI = {10.1007/s00211-019-01094-y},
       URL = {https://doi.org/10.1007/s00211-019-01094-y},
}

@article {BDMMS,
    AUTHOR = {Bouin, Emeric and Dolbeault, Jean and Mischler, St\'{e}phane and
              Mouhot, Cl\'{e}ment and Schmeiser, Christian},
     TITLE = {Hypocoercivity without confinement},
   JOURNAL = {Pure Appl. Anal.},
  FJOURNAL = {Pure and Applied Analysis},
    VOLUME = {2},
      YEAR = {2020},
    NUMBER = {2},
     PAGES = {203--232},
      ISSN = {2578-5885},
   MRCLASS = {82C40 (35H10 35K65 35P15 35Q84 76P05)},
  MRNUMBER = {4113786},
MRREVIEWER = {Pierre Monmarch\'{e}},
       DOI = {10.2140/paa.2020.2.203},
       URL = {https://doi.org/10.2140/paa.2020.2.203},
}
		

@article {MR3324910,
    AUTHOR = {Dolbeault, Jean and Mouhot, Cl\'{e}ment and Schmeiser, Christian},
     TITLE = {Hypocoercivity for linear kinetic equations conserving mass},
   JOURNAL = {Trans. Amer. Math. Soc.},
  FJOURNAL = {Transactions of the American Mathematical Society},
    VOLUME = {367},
      YEAR = {2015},
    NUMBER = {6},
     PAGES = {3807--3828},
      ISSN = {0002-9947},
   MRCLASS = {35F10 (35B40 35H10 82C31)},
  MRNUMBER = {3324910},
MRREVIEWER = {Ingrid Alma Belti\c{t}\u{a}},
       DOI = {10.1090/S0002-9947-2015-06012-7},
       URL = {https://doi.org/10.1090/S0002-9947-2015-06012-7},
}
		
@article {CH2019,
  Author = {Ca{\~n}izo, Jos{\'e} A. and Yolda{\c{s}}, Havva},
 Title = {Asymptotic behaviour of neuron population models structured by elapsed-time},
 FJournal = {Nonlinearity},
 Journal = {Nonlinearity},
 ISSN = {0951-7715},
 Volume = {32},
 Number = {2},
 Pages = {464--495},
 Year = {2019},
 Language = {English},
 DOI = {10.1088/1361-6544/aaea9c},
 Keywords = {35L04,35B10,92B20,35F30,35B35,35L60},
 URL = {hdl.handle.net/20.500.11824/908},
 zbMATH = {7010524},
 Zbl = {1406.35172}
}
     
@incollection {GaESAIM2018,
    AUTHOR = {Gabriel, Pierre},
     TITLE = {Measure solutions to the conservative renewal equation},
 BOOKTITLE = {C{IMPA} {S}chool on {M}athematical {M}odels in {B}iology and
              {M}edicine},
    SERIES = {ESAIM Proc. Surveys},
    VOLUME = {62},
     PAGES = {68--78},
 PUBLISHER = {EDP Sci., Les Ulis},
      YEAR = {2018},
   MRCLASS = {92D25 (35F31)},
  MRNUMBER = {3893207},
}

@unpublished{bansaye:hal-01617071,
  TITLE = {{Ergodic behavior of non-conservative semigroups via generalized Doeblin's conditions}},
  AUTHOR = {Bansaye, Vincent and Cloez, Bertrand and Gabriel, Pierre},
  URL = {https://hal.archives-ouvertes.fr/hal-01617071},
  NOTE = {working paper or preprint},
  YEAR = {2018},
  MONTH = Sep,
  KEYWORDS = {measure solutions ; Floquet theory ; population dynamics ; positive semigroups ; Krein-Rutman theorem ; ergodicity ; non-autonomous linear evolution equations},
  PDF = {https://hal.archives-ouvertes.fr/hal-01617071/file/Doeblin-BCG.pdf},
  HAL_ID = {hal-01617071},
  HAL_VERSION = {v3},
}


@article {DMS_2015,
    AUTHOR = {Dolbeault, Jean and Mouhot, Cl\'ement and Schmeiser, Christian},
     TITLE = {Hypocoercivity for linear kinetic equations conserving mass},
   JOURNAL = {Trans. Amer. Math. Soc.},
  FJOURNAL = {Transactions of the American Mathematical Society},
    VOLUME = {367},
      YEAR = {2015},
    NUMBER = {6},
     PAGES = {3807--3828},
      ISSN = {0002-9947},
   MRCLASS = {35F10 (35B40 35H10 82C31)},
  MRNUMBER = {3324910},
MRREVIEWER = {Ingrid Alma Belti\c t\u a},
       DOI = {10.1090/S0002-9947-2015-06012-7},
       URL = {https://doi.org/10.1090/S0002-9947-2015-06012-7},
}

@article {Mischler_ws_2018,
    AUTHOR = {Mischler, S. and Qui\~{n}inao, C. and Weng, Q.},
     TITLE = {Weak and strong connectivity regimes for a general time
              elapsed neuron network model},
   JOURNAL = {J. Stat. Phys.},
  FJOURNAL = {Journal of Statistical Physics},
    VOLUME = {173},
      YEAR = {2018},
    NUMBER = {1},
     PAGES = {77--98},
      ISSN = {0022-4715},
   MRCLASS = {92B20},
  MRNUMBER = {3859527},
       DOI = {10.1007/s10955-018-2122-x},
       URL = {https://doi-org.accesdistant.sorbonne-universite.fr/10.1007/s10955-018-2122-x},
}
		
@article {Mischler_TENN_2018,
    AUTHOR = {Mischler, S. and Weng, Q.},
     TITLE = {Relaxation in time elapsed neuron network models in the weak
              connectivity regime},
   JOURNAL = {Acta Appl. Math.},
  FJOURNAL = {Acta Applicandae Mathematicae},
    VOLUME = {157},
      YEAR = {2018},
     PAGES = {45--74},
      ISSN = {0167-8019},
   MRCLASS = {35F15 (35B40 35F20 92B20)},
  MRNUMBER = {3850019},
       DOI = {10.1007/s10440-018-0163-4},
       URL = {https://doi-org.accesdistant.sorbonne-universite.fr/10.1007/s10440-018-0163-4},
}

@ARTICLE{2017arXiv171005596D,
   author = {{Dumont}, G. and {Gabriel}, P.},
    title = "{The mean-field equation of a leaky integrate-and-fire neural network: measure solutions and steady states}",
  journal = {ArXiv e-prints},
archivePrefix = "arXiv",
   eprint = {1710.05596},
 primaryClass = "math.AP",
 keywords = {Mathematics - Analysis of PDEs},
     year = 2017,
    month = oct,
   adsurl = {http://adsabs.harvard.edu/abs/2017arXiv171005596D},
  adsnote = {Provided by the SAO/NASA Astrophysics Data System}
}

@article {Villani_hypocoercivity,
    AUTHOR = {Villani, C\'edric},
     TITLE = {Hypocoercivity},
   JOURNAL = {Mem. Amer. Math. Soc.},
  FJOURNAL = {Memoirs of the American Mathematical Society},
    VOLUME = {202},
      YEAR = {2009},
    NUMBER = {950},
     PAGES = {iv+141},
      ISSN = {0065-9266},
      ISBN = {978-0-8218-4498-4},
   MRCLASS = {35Q84 (35H10 76N10 76P05 82C70)},
  MRNUMBER = {2562709},
MRREVIEWER = {Andr\'as Domokos},
       DOI = {10.1090/S0065-9266-09-00567-5},
       URL = {https://doi.org/10.1090/S0065-9266-09-00567-5},
}
@article{rangan2006maximum,
  title={Maximum-entropy closures for kinetic theories of neuronal network dynamics},
  author={Rangan, Aaditya V and Cai, David},
  journal={Physical review letters},
  volume={96},
  number={17},
  pages={178101},
  year={2006},
  publisher={APS}
}

@article {Herda_Rodrigues,
    AUTHOR = {Herda, Maxime and Rodrigues, L. Miguel},
     TITLE = {Large-{T}ime {B}ehavior of {S}olutions to
              {V}lasov-{P}oisson-{F}okker-{P}lanck {E}quations: {F}rom
              {E}vanescent {C}ollisions to {D}iffusive {L}imit},
   JOURNAL = {J. Stat. Phys.},
  FJOURNAL = {Journal of Statistical Physics},
    VOLUME = {170},
      YEAR = {2018},
    NUMBER = {5},
     PAGES = {895--931},
      ISSN = {0022-4715},
   MRCLASS = {35Q83 (35B30 35B35 35B40 35H10 35Q84 82C31)},
  MRNUMBER = {3767000},
       DOI = {10.1007/s10955-018-1963-7},
       URL = {https://doi.org/10.1007/s10955-018-1963-7},
}

@article {Jabin_friction,
    AUTHOR = {Jabin, Pierre-Emmanuel},
     TITLE = {Macroscopic limit of {V}lasov type equations with friction},
   JOURNAL = {Ann. Inst. H. Poincar\'e Anal. Non Lin\'eaire},
  FJOURNAL = {Annales de l'Institut Henri Poincar\'e. Analyse Non Lin\'eaire},
    VOLUME = {17},
      YEAR = {2000},
    NUMBER = {5},
     PAGES = {651--672},
      ISSN = {0294-1449},
   MRCLASS = {82C40 (35B25 45K05)},
  MRNUMBER = {1791881},
       URL = {https://doi.org/10.1016/S0294-1449(00)00118-9},
}

@article {DV2005,
    AUTHOR = {Desvillettes, L. and Villani, C.},
     TITLE = {On the trend to global equilibrium for spatially inhomogeneous
              kinetic systems: the {B}oltzmann equation},
   JOURNAL = {Invent. Math.},
  FJOURNAL = {Inventiones Mathematicae},
    VOLUME = {159},
      YEAR = {2005},
    NUMBER = {2},
     PAGES = {245--316},
      ISSN = {0020-9910},
   MRCLASS = {82C40 (35B40 35F20)},
  MRNUMBER = {2116276},
MRREVIEWER = {Manuel Portilheiro},
       URL = {https://doi.org/10.1007/s00222-004-0389-9},
}

@book {RV_lecture,
    AUTHOR = {Rezakhanlou, Fraydoun and Villani, C\'edric},
     TITLE = {Entropy methods for the {B}oltzmann equation},
    SERIES = {Lecture Notes in Mathematics},
    VOLUME = {1916},
      NOTE = {Lectures from a Special Semester on Hydrodynamic Limits held
              at the Universit\'e de Paris VI, Paris, 2001,
              Edited by Fran\c{c}ois Golse and Stefano Olla},
 PUBLISHER = {Springer, Berlin},
      YEAR = {2008},
     PAGES = {xii+107},
      ISBN = {978-3-540-73704-9},
   MRCLASS = {82C40 (82-02 82-06)},
  MRNUMBER = {2407976},
MRREVIEWER = {Cl\'ement Mouhot},
       URL = {https://doi.org/10.1007/978-3-540-73705-6},
}

@book{Bouchut,
	alteditor = "Birkha{\"u}ser-Verlag",
	author = "Bouchut, F.",
	optaddress = "",
	optedition = "",
	optkey = "",
	optmonth = "",
	optnote = "",
	optnumber = "",
	optseries = "",
	optvolume = "",
	publisher = "Birkha{\"u}ser-Verlag",
	title = "{Non linear stability of finite volume methods for hyperbolic conservation laws and well balanced schemes for sources}",
	year = "2004"
}

@book{Leveque,
	alteditor = "Cambridge university press",
	author = "LeVeque, R. J.",
	optaddress = "",
	optannote = "",
	optedition = "",
	optkey = "",
	optmonth = "",
	optnote = "",
	optnumber = "",
	optseries = "",
	optvolume = "",
	publisher = "Cambridge University Press",
	title = "{Finite volume methods for hyperbolic problems}",
	year = "2002"
}

@incollection{Jungel,
  author    = {J\"ungel, A.},
  title     = {Transport Equations for Semiconductors},
  publisher = {Springer-Verlag},
  booktitle = {Lecture Notes in Physics },
  volume={773},
  year      = {2009},
  address   = {Berlin Heidelberg},
  DOI = {10.1007/978-3-540-89526-8},
}

@incollection{Dafermos,
  author    = {Dafermos, C. },
  title     = {Hyperbolic conservation laws in continuum physics},
  publisher = {Springer-Verlag},
  booktitle = {Grundlehren der Mathematischen Wissenschaften},
  volume={325},
  year      = {2000},
  address   = {Berlin}
}

@book {SerreBook,
       AUTHOR = {Serre, Denis},
       TITLE = {Systems of conservation laws, I},
       PUBLISHER = {Cambridge University Press}, 
       YEAR = {1999},
       ISBN = {0 521 58233 4},
}       
     
@book {DautrayLions6,
    AUTHOR = {Dautray, Robert and Lions, Jacques-Louis},
     TITLE = {Mathematical analysis and numerical methods for science and
              technology. {V}ol. 6},
      NOTE = {Evolution problems. II,
              With the collaboration of Claude Bardos, Michel Cessenat,
              Alain Kavenoky, Patrick Lascaux, Bertrand Mercier, Olivier
              Pironneau, Bruno Scheurer and R{\'e}mi Sentis,
              Translated from the French by Alan Craig},
 PUBLISHER = {Springer-Verlag, Berlin},
      YEAR = {1993},
     PAGES = {xii+485},
      ISBN = {3-540-50206-8; 3-540-66102-6},
   MRCLASS = {00A05 (35-01 47-01 82-01)},
  MRNUMBER = {1295030 (95e:00001)},
       DOI = {10.1007/978-3-642-58004-8},
       URL = {http://dx.doi.org/10.1007/978-3-642-58004-8},
}
		

@book {DautrayLions5,
    AUTHOR = {Dautray, Robert and Lions, Jacques-Louis},
     TITLE = {Mathematical analysis and numerical methods for science and
              technology. {V}ol. 5},
      NOTE = {Evolution problems. I,
              With the collaboration of Michel Artola, Michel Cessenat and
              H{\'e}l{\`e}ne Lanchon,
              Translated from the French by Alan Craig},
 PUBLISHER = {Springer-Verlag, Berlin},
      YEAR = {1992},
     PAGES = {xiv+709},
      ISBN = {3-540-50205-X; 3-540-66101-8},
   MRCLASS = {00A05 (35-01 47-01)},
  MRNUMBER = {1156075 (92k:00006)},
       DOI = {10.1007/978-3-642-58090-1},
       URL = {http://dx.doi.org/10.1007/978-3-642-58090-1},
}	


@misc{CaceresCanizoT2025,
      title={On the local stability of the elapsed-time model in terms of the transmission delay and interconnection strength}, 
      author={María J. Cáceres and José A Cañizo and Nicolas Torres},
      year={2025},
      eprint={2504.17358},
      archivePrefix={arXiv},
      primaryClass={math.DS},
      url={https://arxiv.org/abs/2504.17358}, 
}

@misc{CCTdelay,
      title={Comparison principles and asymptotic behavior of delayed age-structured neuron models}, 
      author={María J Cáceres and José A Cañizo and Nicolas Torres},
      year={2025},
      note={ar{X}iv/2502.13553},
      eprint={2502.13553},
      archivePrefix={arXiv},
      primaryClass={math.AP},
      url={https://arxiv.org/abs/2502.13553}, 
}

@misc{CaceresCanizo2024,
      title={Sequence of pseudoequilibria describes the long-time behavior of the nonlinear noisy leaky integrate-and-fire model with large delay}, 
      author={María J. Cáceres and José A. Cañizo and Alejandro Ramos-Lora},
      year={2024},
      note={ar{X}iv/2403.00971},
      eprint={2403.00971},
      archivePrefix={arXiv},
      primaryClass={math.AP},
      url={https://arxiv.org/abs/2403.00971}, 
}

@article {CaPe,
    AUTHOR = {C{\'a}ceres, Mar{\'{\i}}a J. and Perthame, Beno{\^{\i}}t},
     TITLE = {Beyond blow-up in excitatory integrate and fire neuronal
              networks: refractory period and spontaneous activity},
   JOURNAL = {J. Theoret. Biol.},
  FJOURNAL = {Journal of Theoretical Biology},
    VOLUME = {350},
      YEAR = {2014},
     PAGES = {81--89},
      ISSN = {0022-5193},
   MRCLASS = {92C20 (92B20)},
  MRNUMBER = {3190511},
       DOI = {10.1016/j.jtbi.2014.02.005},
       URL = {http://dx.doi.org/10.1016/j.jtbi.2014.02.005},
}
	
@article {CPSS,
    AUTHOR = {Carrillo, Jos{\'e} Antonio and Perthame, Beno{\^{\i}}t and
              Salort, Delphine and Smets, Didier},
     TITLE = {Qualitative properties of solutions for the noisy integrate
              and fire model in computational neuroscience},
   JOURNAL = {Nonlinearity},
  FJOURNAL = {Nonlinearity},
    VOLUME = {28},
      YEAR = {2015},
    NUMBER = {9},
     PAGES = {3365--3388},
      ISSN = {0951-7715},
   MRCLASS = {35Q84 (35B44 92C20)},
  MRNUMBER = {3403402},
       DOI = {10.1088/0951-7715/28/9/3365},
       URL = {http://dx.doi.org/10.1088/0951-7715/28/9/3365},
}

@article {FoPeSIAM,
    AUTHOR = {Fournier, Nicolas and Perthame, Beno\^{\i}t},
     TITLE = {A nonexpanding transport distance for some structured
              equations},
   JOURNAL = {SIAM J. Math. Anal.},
  FJOURNAL = {SIAM Journal on Mathematical Analysis},
    VOLUME = {53},
      YEAR = {2021},
    NUMBER = {6},
     PAGES = {6847--6872},
      ISSN = {0036-1410},
   MRCLASS = {35K55 (28A33 60J99)},
  MRNUMBER = {4347325},
       DOI = {10.1137/21M1397313},
       URL = {https://doi-org.accesdistant.sorbonne-universite.fr/10.1137/21M1397313},
}

@article {KPS2021,
    AUTHOR = {Kim, Jeongho and Perthame, Beno\^{\i}t and Salort, Delphine},
     TITLE = {Fast voltage dynamics of voltage-conductance models for neural
              networks},
   JOURNAL = {Bull. Braz. Math. Soc. (N.S.)},
  FJOURNAL = {Bulletin of the Brazilian Mathematical Society. New Series.
              Boletim da Sociedade Brasileira de Matem\'{a}tica},
    VOLUME = {52},
      YEAR = {2021},
    NUMBER = {1},
     PAGES = {101--134},
      ISSN = {1678-7544},
   MRCLASS = {35Q92 (35Q84 92B20)},
  MRNUMBER = {4212118},
       DOI = {10.1007/s00574-019-00192-7},
       URL = {https://doi-org.accesdistant.sorbonne-universite.fr/10.1007/s00574-019-00192-7},
}
		
@article {PS2019,
    AUTHOR = {Perthame, Beno\^{\i}t and Salort, Delphine},
     TITLE = {Derivation of a voltage density equation from a
              voltage-conductance kinetic model for networks of
              integrate-and-fire neurons},
   JOURNAL = {Commun. Math. Sci.},
  FJOURNAL = {Communications in Mathematical Sciences},
    VOLUME = {17},
      YEAR = {2019},
    NUMBER = {5},
     PAGES = {1193--1211},
      ISSN = {1539-6746},
   MRCLASS = {35Q84 (35B65 62M45 92B20 92C20)},
  MRNUMBER = {4044187},
MRREVIEWER = {Alessandro Goffi},
       DOI = {10.4310/CMS.2019.v17.n5.a2},
       URL = {https://doi-org.accesdistant.sorbonne-universite.fr/10.4310/CMS.2019.v17.n5.a2},
}

@article{caceres:hal-03172021,
  TITLE = {{An elapsed time model for strongly coupled inhibitory and excitatory neural networks}},
  AUTHOR = {Caceres, Maria J. and Perthame, Beno{\^i}t and Salort, Delphine and Torres, Nicolas},
  URL = {https://hal.science/hal-03172021},
  JOURNAL = {{Physica D: Nonlinear Phenomena}},
  PUBLISHER = {{Elsevier}},
  YEAR = {2021},
  MONTH = Jun,
  DOI = {10.1016/j.physd.2021.132977},
  KEYWORDS = {Neural networks ; Mathematical neuroscience ; Periodic solutions ; Delay differential equations ; Structured equations},
  PDF = {https://hal.science/hal-03172021v1/file/Article_Convergence.pdf},
  HAL_ID = {hal-03172021},
  HAL_VERSION = {v1},
}

@article {TPS2022,
    AUTHOR = {Torres, Nicol\'{a}s and Perthame, Beno\^{i}t and Salort, Delphine},
     TITLE = {A multiple time renewal equation for neural assemblies with
              elapsed time model},
   JOURNAL = {Nonlinearity},
  FJOURNAL = {Nonlinearity},
    VOLUME = {35},
      YEAR = {2022},
    NUMBER = {10},
     PAGES = {5051--5075},
      ISSN = {0951-7715},
   MRCLASS = {35B40 (35F20 35R09 92B20)},
  MRNUMBER = {4500858},
       DOI = {10.1088/1361-6544/ac8714},
       URL = {https://doi-org.accesdistant.sorbonne-universite.fr/10.1088/1361-6544/ac8714},
}
@Article{GrayAndSinger:89,
 Author = "Gray, C. M. and Singer, W.",
  Title = "{Stimulus-specific neuronal oscillations in orientation columns of cat visual cortex}",
Journal = "Proc Natl Acad Sci U S A",
   Year = "1989",
 Volume = "86",
 Number = "5",
  Pages = "1698-1702"}
 } 
		

@article {Henry:12,
    AUTHOR = {Dumont, Gr\'egory and Henry, Jacques},
     TITLE = {Population density models of integrate-and-fire neurons with
              jumps: well-posedness},
   JOURNAL = {J. Math. Biol.},
  FJOURNAL = {Journal of Mathematical Biology},
    VOLUME = {67},
      YEAR = {2013},
    NUMBER = {3},
     PAGES = {453--481},
      ISSN = {0303-6812},
   MRCLASS = {92B25 (35B30 35F30 82C32)},
  MRNUMBER = {3084360},
MRREVIEWER = {Irina Ioana Mohorianu},
       URL = {https://doi.org/10.1007/s00285-012-0554-5},
}
		

@article {Henry:13,
    AUTHOR = {Dumont, Gr\'egory and Henry, Jacques},
     TITLE = {Synchronization of an excitatory integrate-and-fire neural
              network},
   JOURNAL = {Bull. Math. Biol.},
  FJOURNAL = {Bulletin of Mathematical Biology},
    VOLUME = {75},
      YEAR = {2013},
    NUMBER = {4},
     PAGES = {629--648},
      ISSN = {0092-8240},
   MRCLASS = {92C20},
  MRNUMBER = {3039916},
       URL = {https://doi.org/10.1007/s11538-013-9823-8},
       abstract={In this paper, we study the influence of the coupling strength on the synchronization behavior of a population of leaky integrate-and-fire neurons that is self-excitatory with a population density approach. Each neuron of the population is assumed to be stochastically driven by an independent Poisson spike train and the synaptic interaction between neurons is modeled by a potential jump at the reception of an action potential. Neglecting the synaptic delay, we will establish that for a strong enough connectivity between neurons, the solution of the partial differential equation which describes the population density function must blow up in finite time. Furthermore, we will give a mathematical estimate on the average connection per neuron to ensure the occurrence of a burst. Interpreting the blow up of the solution as the presence of a Dirac mass in the firing rate of the population, we will relate the blow up of the solution to the occurrence of the synchronization of neurons. Fully stochastic simulations of a finite size network of leaky integrate-and-fire neurons are performed to illustrate our theoretical results.},
}
		 

@article{AD09,
author = "Albantakis, L. and Deco, G.",
title = {The encoding of alternatives in multiple-choice decision making},
abstract = {During the last decades, research on binary decision making elucidated some of the basic neural mechanisms underlying the decision-making process. Recently, the focus of experimental as well as modeling studies began to shift from simple binary choices to decision making with multiple alternatives. In this article, we address the question how different numbers of choice alternatives might be handled and encoded in the brain. We present a minimal, biophysically realistic spiking neuron model for decision making with multiple alternatives. Our model accounts for the relevant aspects of recent experimental data of a random-dot motion-discrimination task on both the cellular and behavioral level. Notably, all network parameters and inputs in our network are independent of the number of possible alternatives used in the tested experimental paradigms (2 and 4 alternatives and 2 alternatives with an angular separation of 90 degrees ). This avoids the use of extra top-down regulation mechanisms to adapt the network to the number of choices. Furthermore, we show that increasing the number of neurons encoding each choice alternative is positively related to the network's capacity of choice-number-independent decision making. Consequently, our results suggest a physiological advantage of a pooled, multineuron representation of choice alternatives.},
journal = "Proc Natl Acad Sci U S A",
year = "2009",
volume = "106",
number = "25",
pages = "10308-10313",
month = "Jun",
pmid = "19497888",
url = "http://www.hubmed.org/display.cgi?uids=19497888",
doi = "10.1073/pnas.0901621106"
}

@article{PTW,
author = {Pakdaman, K. and Thieullen, M. and Wainrib, G.},
 title =   {Fluid limit theorems for stochastic hybrid systems with application to neuron models},
 journal = {Adv. in Appl. Probab.},
 volume = {42},
 number={3},
 pages = {761--794},
 year =  { 2010}
}



@article{MRR07,
author = {Moreno-Bote, R. and Rinzel, J. and Rubin, N.},
 title =   {Noise-induced alternations in an attractor network model of perceptual bistability},
 journal = {J.\ Neurophysiol.},
 volume =  98,
 pages =   "1125-1139",
 year =    2007
}

@article{sirovich,
  author  = {Sirovich, L. and Omurtag, A. and Lubliner K.},
  title   = {Dynamics of neural populations: Stability and synchrony},
  journal = {Network: Computation in Neural Systems},
  year  = {2006},
    month = {},
    volume= {17},
    number= {},
    pages = {3--29},
    note  = {}
}

@article{omurtag,
  author  = {Omurtag, A. and Knight B. W. and Sirovich, L.},
  title   = {On the Simulation of Large Populations of Neurons},
  journal = {J. Comp. Neurosci.},
  year  = {2000},
    month = {},
    volume= {8},
    number= {},
    pages = {51--63},
    note  = {}
}

@article{brunel,
  author  = {Brunel, N.},
  title   = {Dynamics of sparsely connected networks of excitatory
and inhibitory spiking networks},
  journal = {J. Comp. Neurosci.},
  year  = {2000},
    month = {},
    volume= {8},
    number= {},
    pages = {183--208},
    note  = {}
}

@article{ForucaudBrunel,
  author  = {Fourcaud, N. and Brunel, N.},
  title   = {Dynamics of the firing probability of noisy integrate-and-fire neurons,},
  journal = {Neural Comp.},
  year  = {2002},
    month = {},
    volume= {14},
    number= {},
    pages = {2057--2110},
    note  = {}
}


@article{BDP,
  author  = {Blanchet, A. and  Dolbeault, J. and Perthame, B.},
  title   = {{K}eller-{S}egel model: optimal critical mass and qualitative
properties of the solutions},
  journal = {Electron. J. Differential Equations},
  year  = {2006},
    month = {},
    volume= {44},
    number= {},
    pages = {32 (electronic)},
    note  = {}
}
@article{BrGe,
  author  = {Brette, R. and Gerstner, W.},
  title   = {Adaptive exponential integrate-and-fire model as an effective description of neural activity},
  journal = {Journal of neurophysiology},
  year  = {2005},
    month = {},
    volume= {94},
    number= {},
    pages = {3637--3642},
    note  = {}
}




@article{RouxCar,
author = {Carrillo, Jos\'{e} A. and Roux, Pierre},
title = {Nonlinear partial differential equations in neuroscience: From modeling to mathematical theory},
journal = {Mathematical Models and Methods in Applied Sciences},
volume = {35},
number = {02},
pages = {403-584},
year = {2025},
doi = {10.1142/S0218202525400044},

URL = { 
    
        https://doi.org/10.1142/S0218202525400044
    
    

},
eprint = { 
    
        https://doi.org/10.1142/S0218202525400044
    
    

}
,
    abstract = { Many systems of partial differential equations (PDEs) have been proposed as simplified representations of complex collective behaviors in large networks of neurons. In this survey, we briefly discuss their derivations and then review the mathematical methods developed to handle the unique features of these models, which are often nonlinear and nonlocal. The first part focuses on parabolic Fokker–Planck equations: the Nonlinear Noisy Leaky Integrate and Fire neuron model, stochastic neural fields in PDE form with applications to grid cells and rate-based models for decision-making. The second part concerns hyperbolic transport equations, namely, the model of the Time Elapsed since the last discharge and the jump-based Leaky Integrate and Fire model. The last part covers some kinetic mesoscopic models, with particular attention to the kinetic Voltage-Conductance model and FitzHugh–Nagumo kinetic Fokker–Planck systems. }
}



@article{BrHa,
  author  = {Brunel, N. and Hakim, V.},
  title   = {Fast global oscillations in networks of
integrate-and-fire neurons with long firing rates},
  journal = {Neural Computation},
  year  = {1999},
    month = {},
    volume= {11},
    number= {},
    pages = {1621--1671},
    note  = {}
}


@article{BCD,
  author  = {Buet, C. and Cordier, S. and Dos Santos, V.},
  title   = {A conservative and entropy
scheme for a simplified model of granular media},
  journal = {Transp. Theory Statist. Phys.},
  year  = {2004},
    month = {},
    volume= {33},
    number= {},
    pages = {125--155},
    note  = {}
}
@article{CCTao,
  author  = {C\'aceres, M. J. and  Carrillo,  J. A.  and  Tao, L.},
  title   = {A numerical solver for a nonlinear {F}okker-{P}lanck equation
representation of neuronal network dynamics},
  journal = {J. Comp. Phys.},
  year  = {2011},
    month = {},
    volume= {230},
    number= {},
    pages = {1084-1099},
    note  = {}
}

@article{CBGW,
  author  = {Compte, A. and
Brunel, N. and  Goldman-Rakic,  P. S.  and  Wang, X.-J.},
  title   = {Synaptic mechanisms and network dynamics underlying spatial
working memory in a cortical network model},
  journal = {Cerebral Cortex 10},
  year  = {2000},
    month = {},
    volume= {10},
    number= {},
    pages = {910--923},
    note  = {}
}
@article{CPZ,
  author  = {Corrias, L. and  Perthame, B.  and  Zaag, H.},
  title   = {Global solutions of some
chemotaxis and angiogenesis systems in high space dimensions},
  journal = {Milan J. Math.},
  year  = {2004},
    month = {},
    volume= {72},
    number= {},
    pages = {1--28},
    note  = {}
}


@article{GG09,
  author  = {Gonz\'alez, M. d. M. and  Gualdani, M. P.},
  title   = {Asymptotics for a symmetric equation in price formation},
  journal = {App. Math. Optim.},
  year  = {2009},
    month = {},
    volume= {59},
    number= {},
    pages = {233--246},
    note  = {}
}

@article{G,
  author  = {Guillamon, T.},
  title   = {An introduction to the mathematics of
neural activity},
  journal = {Butl. Soc. Catalana Mat.},
  year  = {2004},
    month = {},
    volume= {19},
    number= {},
    pages = {25--45},
    note  = {}
}

@article{lapicque,
  author  = {Lapicque, L.},
  title   = {Recherches quantitatives sur l'excitation
\'electrique des nerfs trait\'ee comme une polarisation},
  journal = { J. Physiol. Pathol. Gen},
  year  = {1907},
    month = {},
    volume= {9},
    number= {},
    pages = {620--635},
    note  = {}
}

@incollection{ledoux,
  author    = {Ledoux, M.},
  title     = {The concentration of measure phenomenon},
  booktitle = {AMS math. surveys and monographs},
  editor    = {},
  publisher = {AMS},
  pages     = {},
  year      = {2001},
    address = {},
    volume  = {89},
    edition = {}
}

@article{mg,
  author  = {Mattia, M. and Del Giudice, P.},
  title   = {Population dynamics of interacting spiking neurons},
  journal = {Phys. Rev. E},
  year  = {2002},
    month = {},
    volume= {66},
    number= {},
    pages = {051917},
    note  = {}
}

@article{RefMMP,
  author  = {Michel, P. and  Mischler, S. and  Perthame, B.},
  title   = {General relative entropy inequality: an illustration
on growth models},
  journal = {J. Math. Pures Appl.},
  year  = {2005},
    month = {},
    volume= {84},
    number= {},
    pages = {1235--1260},
    note  = {}
}

@article {PPD3,
    AUTHOR = {Pakdaman, Khashayar and Perthame, Beno{\^\i}t and Salort, Delphine},
     TITLE = {Adaptation and fatigue model for neuron networks and large
              time asymptotics in a nonlinear fragmentation equation},
   JOURNAL = {J. Math. Neurosci.},
  FJOURNAL = {Journal of Mathematical Neuroscience},
    VOLUME = {4},
      YEAR = {2014},
     PAGES = {Art. 14, 26},
      ISSN = {2190-8567},
   MRCLASS = {82C21 (35B40 35F31 35R09 92B20)},
  MRNUMBER = {3246939},
       URL = {https://doi.org/10.1186/2190-8567-4-14},
}

@article {PPD2,
    AUTHOR = {Pakdaman, Khashayar and Perthame, Beno{\^\i}t and Salort, Delphine},
     TITLE = {Relaxation and self-sustained oscillations in the time elapsed
              neuron network model},
   JOURNAL = {SIAM J. Appl. Math.},
  FJOURNAL = {SIAM Journal on Applied Mathematics},
    VOLUME = {73},
      YEAR = {2013},
    NUMBER = {3},
     PAGES = {1260--1279},
      ISSN = {0036-1399},
   MRCLASS = {82C32 (92C20)},
  MRNUMBER = {3071416},
       URL = {https://doi.org/10.1137/110847962},
}
		
		
@article{GW,
author = {Mathieu N. Galtier and Gilles Wainrib},
title = {A Biological Gradient Descent for Prediction Through a Combination of STDP and Homeostatic Plasticity},
journal = {Neural Computation},
volume = {25},
number = {11},
pages = {2815-2832},
year = {2013},
}		
		

@Article{Wilson1973,
author="Wilson, H. R.
and Cowan, J. D.",
title="A mathematical theory of the functional dynamics of cortical and thalamic nervous tissue",
journal="Kybernetik",
year="1973",
month="Sep",
day="01",
volume="13",
number="2",
pages="55--80",
abstract="It is proposed that distinct anatomical regions of cerebral cortex and of thalamic nuclei are functionally two-dimensional. On this view, the third (radial) dimension of cortical and thalamic structures is associated with a redundancy of circuits and functions so that reliable signal processing obtains in the presence of noisy or ambiguous stimuli.",
issn="1432-0770",
doi="10.1007/BF00288786",
url="https://doi.org/10.1007/BF00288786"
}

@article{CCDR,
 Author = {Chevallier, Julien and C{\'a}ceres, Mar{\'{\i}}a Jos{\'e} and Doumic, Marie and Reynaud-Bouret, Patricia},
 Title = {Microscopic approach of a time elapsed neural model},
 FJournal = {M\(^3\)AS. Mathematical Models \& Methods in Applied Sciences},
 Journal = {Math. Models Methods Appl. Sci.},
 ISSN = {0218-2025},
 Volume = {25},
 Number = {14},
 Pages = {2669--2719},
 Year = {2015},
 Language = {English},
 DOI = {10.1142/S021820251550058X},
 Keywords = {35Q90,92B20,35F15,35B10,60G57,60K15},
 zbMATH = {6506995},
 Zbl = {1325.35231}
}

@article {PPD,
    AUTHOR = {Pakdaman, Khashayar and Perthame, Beno{\^\i}t and Salort, Delphine},
     TITLE = {Dynamics of a structured neuron population},
   JOURNAL = {Nonlinearity},
  FJOURNAL = {Nonlinearity},
    VOLUME = {23},
      YEAR = {2010},
    NUMBER = {1},
     PAGES = {55--75},
      ISSN = {0951-7715},
   MRCLASS = {35Q92 (35B35 92C20)},
  MRNUMBER = {2576373},
       URL = {https://doi.org/10.1088/0951-7715/23/1/003},
}

@article {PeSa,
    AUTHOR = {Perthame, Beno{\^\i}t and Salort, Delphine},
     TITLE = {On a voltage-conductance kinetic system for integrate \& fire
              neural networks},
   JOURNAL = {Kinet. Relat. Models},
  FJOURNAL = {Kinetic and Related Models},
    VOLUME = {6},
      YEAR = {2013},
    NUMBER = {4},
     PAGES = {841--864},
      ISSN = {1937-5093},
   MRCLASS = {35Q84 (35B65 35Q83 62M45 82C32 92C20)},
  MRNUMBER = {3177631},
MRREVIEWER = {Renjun Duan},
       DOI = {10.3934/krm.2013.6.841},
       URL = {https://doi.org/10.3934/krm.2013.6.841},
}		


@article {PSW,
    AUTHOR = {Perthame, Beno{\^\i}t and Salort, Delphine and Wainrib, Gilles},
     TITLE = {Distributed synaptic weights in a {LIF} neural network and
              learning rules},
   JOURNAL = {Phys. D},
  FJOURNAL = {Physica D. Nonlinear Phenomena},
    VOLUME = {353/354},
      YEAR = {2017},
     PAGES = {20--30},
      ISSN = {0167-2789},
   MRCLASS = {92B20 (92C20)},
  MRNUMBER = {3669355},
       DOI = {10.1016/j.physd.2017.05.005},
       URL = {https://doi.org/10.1016/j.physd.2017.05.005},
}
		

@article {PSK,
    AUTHOR = {Kang, Moon-Jin and Perthame, Beno\^\i t and Salort, Delphine},
     TITLE = {Dynamics of time elapsed inhomogeneous neuron network model},
   JOURNAL = {C. R. Math. Acad. Sci. Paris},
  FJOURNAL = {Comptes Rendus Math\'ematique. Acad\'emie des Sciences. Paris},
    VOLUME = {353},
      YEAR = {2015},
    NUMBER = {12},
     PAGES = {1111--1115},
      ISSN = {1631-073X},
   MRCLASS = {35F31 (35B40)},
  MRNUMBER = {3427917},
       DOI = {10.1016/j.crma.2015.09.029},
       URL = {https://doi.org/10.1016/j.crma.2015.09.029},
}
					
@book{perthame2015parabolic,
  AUTHOR = {Perthame, Beno\^{\i}t},
     TITLE = {Parabolic equations in biology},
    SERIES = {Lecture Notes on Mathematical Modelling in the Life Sciences},
 PUBLISHER = {Springer, Cham},
      YEAR = {2015},
     PAGES = {xii+199},
      ISBN = {978-3-319-19499-8; 978-3-319-19500-1},
   MRCLASS = {92-01 (35K40 35Q92 82C31 92Cxx 92Dxx 92Exx)},
  MRNUMBER = {3408563},
       DOI = {10.1007/978-3-319-19500-1},
       URL = {https://doi-org.accesdistant.sorbonne-universite.fr/10.1007/978-3-319-19500-1},
}

@book{BP07,
title = {Transport equations in biology},
publisher = {Birkh\"auser Verlag},
year = {2007},
author = {Perthame, Beno{\^\i t}},
pages = {x+198},
series = {Frontiers in Mathematics},
address = {Basel},
isbn = {978-3-7643-7841-7; 3-7643-7841-7},
mrclass = {35-02 (35Q80 92C17 92C50 92D25)},
mrnumber = {MR2270822 (2007j:35004)},
mrreviewer = {Reinhard Illner}
}

@article{PPCV,
  author  = {Pham, J. and Pakdaman, K. and Champagnat, J.
and  Vivert, J.-F.},
  title   = {Activity in sparsely
connected excitatory neural networks: effect of connectivity},
  journal = {Neural Networks},
  year  = {1998},
    month = {},
    volume= {11},
    number= {},
    pages = {415--434},
    note  = {}
}

@incollection{RBW,
  author    = {A. Renart and N. Brunel and  X.-J. Wang},
  title     = {Mean-Field Theory of
Irregularly Spiking Neuronal Populations and Working Memory in
Recurrent Cortical Networks},
  booktitle = {Computational
Neuroscience: A comprehensive approach},
  editor    = {Jianfeng Feng},
  publisher = {Chapman \& Hall/CRC Mathematical Biology and Medicine Series},
  pages     = {},
  year      = {2004},
    address = {},
    volume  = {},
    edition = {}
}
@incollection{S,
  author  = { C.-W. Shu},
  title   = {Essentially Non-Oscillatory and Weighted Esentially
Non-Oscillatory
schemes for hyperbolic conservation laws},
  booktitle = {Advanced Numerical
Approximation of Nonlinear Hyperbolic Equations, B. Cockburn, C.
Johnson, C.-W. Shu and E. Tadmor},
  editor    = {A. Quarteroni},
  publisher = {Springer},
  pages     = {325-432},
  year      = {1998},
    address = {},
    volume  = {1697},
    edition = {}
}

@article{FLtoy,
author = {Nicolas Fournier and Eva L{\"o}cherbach},
title = {{On a toy model of interacting neurons}},
volume = {52},
journal = {Annales de l'Institut Henri Poincar\'e, Probabilit\'es et Statistiques},
number = {4},
publisher = {Institut Henri Poincar\'e},
pages = {1844 -- 1876},
keywords = {Biological neural nets, interacting particle systems, Mean-field interaction, Nonlinear stochastic differential equations, Piecewise deterministic Markov processes},
year = {2016},
doi = {10.1214/15-AIHP701},
URL = {https://doi.org/10.1214/15-AIHP701}
}


@article {Touboul_2008,
  AUTHOR = {Touboul, Jonathan},
   TITLE = {Bifurcation analysis of a general class of nonlinear
            integrate-and-fire neurons},
 JOURNAL = {SIAM J. Appl. Math.},
FJOURNAL = {SIAM Journal on Applied Mathematics},
  VOLUME = {68},
    YEAR = {2008},
  NUMBER = {4},
   PAGES = {1045--1079},
    ISSN = {0036-1399},
 MRCLASS = {34C23 (37G10 37N25 92C20)},
MRNUMBER = {2390979 (2009c:34083)},
MRREVIEWER = {Peter Moson},
     DOI = {10.1137/070687268},
     URL = {http://dx.doi.org/10.1137/070687268},
}


@article{Touboul_AQIF,
  author  = {Touboul, J.},
  title   = {Importance of the cutoff value in the
quadratic adaptive integrate-and-fire model},
  journal = {Neural Computation},
  year  = {2009},
    month = {},
    volume= {21},
    number= {},
    pages = {2114-2122},
    note  = {}
}

@book{T,
  author    = {H.C. Tuckwell},
  title     = {Introduction to Theoretical Neurobiology},
  publisher = {Cambridge Univ. Press},
  year=   {1988},
  address = {Cambridge}
}

@article{CCP,
  AUTHOR = {C\'aceres, Mar{\'\i}a J. and Carrillo, Jos\'e A. and Perthame, Beno{\^\i}t},
     TITLE = {Analysis of nonlinear noisy integrate \& fire neuron models:
              blow-up and steady states},
   JOURNAL = {J. Math. Neurosci.},
  FJOURNAL = {Journal of Mathematical Neuroscience},
    VOLUME = {1},
      YEAR = {2011},
     PAGES = {Art. 7, 33},
      ISSN = {2190-8567},
   MRCLASS = {35K10 (35B44 35R60 92C20)},
  MRNUMBER = {2853216},
       URL = {https://doi.org/10.1186/2190-8567-1-7},
}


@article{LyT,
  AUTHOR = { Ly, C. and Tranchina, D.},
  TITLE = {Critical analysis of dimension reduction by a moment closure method in a population density approach to neural network modeling},
JOURNAL = {Neural Computation},
VOLUME = {19},
 YEAR = {2007}, 
 PAGES = {2032--2092},
} 


@article {CTSL,
   AUTHOR = {Cai, D. and Tao, L.  and Shelley,  M. and McLaughlin D. W. },
   TITLE = {An effective kinetic representation of fluctuation-driven neuronal networks with application to simple and complex cells in visual cortex},
   JOURNAL = {PNAS}, 
   VOLUME = {101},
    YEAR = {2004},
   PAGES = {7757--7762}
}

@article {RKC,
    AUTHOR = {Rangan, Aaditya V. and Kova{\v c}i{\v c}, Gregor and Cai, David},
     TITLE = {Kinetic theory for neuronal networks with fast and slow
              excitatory conductances driven by the same spike train},
   JOURNAL = {Phys. Rev. E (3)},
  FJOURNAL = {Physical Review E. Statistical, Nonlinear, and Soft Matter
              Physics},
    VOLUME = {77},
      YEAR = {2008},
    NUMBER = {4},
     PAGES = {041915, 13},
      ISSN = {1539-3755},
   MRCLASS = {92C20 (60G55 62P10 82C32)},
  MRNUMBER = {2495462},
MRREVIEWER = {Chih-Wen Shih},
       URL = {https://doi.org/10.1103/PhysRevE.77.041915},
}
		

@article {RCT,
    AUTHOR = {Rangan, Aaditya V. and Cai, David and Tao, Louis},
     TITLE = {Numerical methods for solving moment equations in kinetic
              theory of neuronal network dynamics},
   JOURNAL = {J. Comput. Phys.},
  FJOURNAL = {Journal of Computational Physics},
    VOLUME = {221},
      YEAR = {2007},
    NUMBER = {2},
     PAGES = {781--798},
      ISSN = {0021-9991},
   MRCLASS = {92C20 (65M99 82C32)},
  MRNUMBER = {2293150},
       URL = {https://doi.org/10.1016/j.jcp.2006.06.036},
}
		

@article {RCnum,
    AUTHOR = {Rangan, Aaditya V. and Cai, David},
     TITLE = {Fast numerical methods for simulating large-scale
              integrate-and-fire neuronal networks},
   JOURNAL = {J. Comput. Neurosci.},
  FJOURNAL = {Journal of Computational Neuroscience},
    VOLUME = {22},
      YEAR = {2007},
    NUMBER = {1},
     PAGES = {81--100},
      ISSN = {0929-5313},
   MRCLASS = {92C20 (65L05)},
  MRNUMBER = {2278487},
       URL = {https://doi.org/10.1007/s10827-006-8526-7},
}
		

@article {CTRM06,
    AUTHOR = {Cai, David and Tao, Louis and Rangan, Aaditya V. and
              McLaughlin, David W.},
     TITLE = {Kinetic theory for neuronal network dynamics},
   JOURNAL = {Commun. Math. Sci.},
  FJOURNAL = {Communications in Mathematical Sciences},
    VOLUME = {4},
      YEAR = {2006},
    NUMBER = {1},
     PAGES = {97--127},
      ISSN = {1539-6746},
   MRCLASS = {82C32 (37N25 82C31 92C20)},
  MRNUMBER = {2204080 (2007a:82053)},
MRREVIEWER = {Chih-Wen Shih},
       URL = {http://projecteuclid.org/euclid.cms/1145905939},
}

@article{cai2004effective,
  title={An effective kinetic representation of fluctuation-driven neuronal networks with application to simple and complex cells in visual cortex},
  author={Cai, David and Tao, Louis and Shelley, Michael and McLaughlin, David W},
  journal={Proceedings of the National Academy of Sciences},
  volume={101},
  number={20},
  pages={7757--7762},
  year={2004},
  publisher={National Acad Sciences}
}

@article{Caikinetic,
  author  = {Rangan, A. V.  and Kova{\u c}i{\u c}, G. and Cai, D.},
  title   = {Kinetic theory for neuronal networks with fast and slow excitatory conductances driven
by the same spike train},
  journal = {Physical Review E},
  year  = {2008},
    month = {},
    volume= {77},
    number= {041915},
    pages = {1-13},
    note  = {}
}


@article{RB,
  author  = {Rossant, C. and Goodman, D. F. M. and Fontaine, B. and Platkiewicz, J. and Magnusson, A. K.  and  Brette, R.},
  title   = {Fitting neuron models to spike trains},
  journal = {Frontiers in Neuroscience},
  year  = {2011},
    month = {},
    volume= {5},
    number= {9},
    pages = {1-8},
    note  = {}
}


@article{CGGS,
 author = {Carrillo, Jos{\'e} A. and Gonz{\'a}lez, Mar{\'{\i}}a D. M. and Gualdani, Maria P. and Schonbek, Maria E.},
 title = {Classical solutions for a nonlinear {Fokker}-{Planck} equation arising in computational neuroscience},
 fjournal = {Communications in Partial Differential Equations},
 journal = {Commun. Partial Differ. Equations},
 issn = {0360-5302},
 volume = {38},
 number = {1-3},
 pages = {385--409},
 year = {2013},
} 

@article{AV93,
  author  = {Abbott, L. F. and Vreeswijk, C. van},
  title   = {Asynchronous states in networks of pulse-coupled oscillators},
  journal = {Phys. Rev. E},
  year  = {1993},
    month = {},
    volume= {48},
    number= {2},
    pages = {1483-1490},
    note  = {}
}

@article {NKKRC10,
   AUTHOR = {Newhall, K. and Kova{\v{c}}i{\v{c}}, G. and Kramer, P. and 
Rangan, A. V. and Cai, D.},
    TITLE = {Cascade-Induced synchrony in stochastically driven neuronal networks},
  JOURNAL = {Phys. Rev. E},
   VOLUME = {82},
     YEAR = {2010},
     PAGES ={041903} 
}

@article {NKKZRC10,
   AUTHOR = {Newhall, K. and Kova{\v{c}}i{\v{c}}, G. and Kramer, P. and 
Zhou, D. and Rangan, A. V. and Cai, D.},
    TITLE = {Dynamics of Current-Based, Poisson Driven, Integrate-and-Fire Neuronal Networks},
  JOURNAL = {Comm. in Math. Sci.},
   VOLUME = {8},
     YEAR = {2010},
    PAGES = {541--600},
}

@article{baladron2012mean,
  title={Mean-field description and propagation of chaos in networks of Hodgkin-Huxley and FitzHugh-Nagumo neurons},
  author={Baladron, Javier and Fasoli, Diego and Faugeras, Olivier and Touboul, Jonathan and others},
  journal={The Journal of Mathematical Neuroscience},
  volume={2},
  number={1},
  pages={10},
  year={2012},
  publisher={Springer}
}

@article {BoFaTa,
    AUTHOR = {Bossy, Mireille and Faugeras, Olivier and Talay, Denis},
     TITLE = {Clarification and complement to ``{M}ean-field description and
              propagation of chaos in networks of {H}odgkin-{H}uxley and
              {F}itz{H}ugh-{N}agumo neurons''},
   JOURNAL = {J. Math. Neurosci.},
  FJOURNAL = {Journal of Mathematical Neuroscience},
    VOLUME = {5},
      YEAR = {2015},
     PAGES = {Art. 19, 23},
      ISSN = {2190-8567},
   MRCLASS = {92C20 (34F05 60F99 60J60 60K35)},
  MRNUMBER = {3392551},
       DOI = {10.1186/s13408-015-0031-8},
       URL = {https://doi-org.accesdistant.sorbonne-universite.fr/10.1186/s13408-015-0031-8},
}

@article{touboul2014propagation,
  title={Propagation of chaos in neural fields},
  author={Touboul, Jonathan},
  journal={The Annals of Applied Probability},
  volume={24},
  number={3},
  pages={1298--1328},
  year={2014},
  publisher={Institute of Mathematical Statistics}
}

@article{delarue2015global,
  title={Global solvability of a networked integrate-and-fire model of McKean--Vlasov type},
  author={Delarue, Fran{\c{c}}ois and Inglis, James and Rubenthaler, Sylvain and Tanr{\'e}, Etienne and others},
  journal={The Annals of Applied Probability},
  volume={25},
  number={4},
  pages={2096--2133},
  year={2015},
  publisher={Institute of Mathematical Statistics}
}

@article{dai1996mckean,
  title={McKean-Vlasov limit for interacting random processes in random media},
  author={Dai Pra, Paolo and den Hollander, Frank},
  journal={Journal of statistical physics},
  volume={84},
  number={3-4},
  pages={735--772},

  year={1996},
  publisher={Springer}
}

@article{bertini2010dynamical,
  title={Dynamical aspects of mean field plane rotators and the Kuramoto model},
  author={Bertini, Lorenzo and Giacomin, Giambattista and Pakdaman, Khashayar},
  journal={Journal of Statistical Physics},
  volume={138},
  number={1-3},
  pages={270--290},
  year={2010},
  publisher={Springer}
}

@article{sompolinsky1988chaos,
  title={Chaos in random neural networks},
  author={Sompolinsky, Haim and Crisanti, A and Sommers, HJ},
  journal={Physical Review Letters},
  volume={61},
  number={3},
  pages={259},
  year={1988},
  publisher={APS}
}

@article{arous1995large,
  title={Large deviations for Langevin spin glass dynamics},
  author={Ben Arous, G. and Guionnet, A.},
  journal={Probability Theory and Related Fields},
  volume={102},
  number={4},
  pages={455--509},
  year={1995},
  publisher={Springer}
}


@article{gerstner2002mathematical,
  title={Mathematical formulations of Hebbian learning},
  author={Gerstner, W. and Kistler, W. M.},
  journal={Biological cybernetics},
  volume={87},
  number={5},
  pages={404--415},
  year={2002},
  publisher={Springer}
}


@article{morrison2008phenomenological,
  title={Phenomenological models of synaptic plasticity based on spike timing},
  author={Morrison, Abigail and Diesmann, Markus and Gerstner, Wulfram},
  journal={Biological cybernetics},
  volume={98},
  number={6},
  pages={459--478},
  year={2008},
  publisher={Springer}
}

@article{burkitt2007spike,
  title={Spike-timing-dependent plasticity for neurons with recurrent connections},
  author={Burkitt, Anthony N and Gilson, Matthieu and van Hemmen, J Leo},
  journal={Biological cybernetics},
  volume={96},
  number={5},
  pages={533--546},
  year={2007},
  publisher={Springer}
}


@article{gilson2009emergence,
  title={Emergence of network structure due to spike-timing-dependent plasticity in recurrent neuronal networks iv},
  author={Gilson, Matthieu and Burkitt, Anthony N and Grayden, David B and Thomas, Doreen A and van Hemmen, J Leo},
  journal={Biological cybernetics},
  volume={101},
  number={5-6},
  pages={427--444},
  year={2009},
  publisher={Springer}
}

@book{hebb1949organization,
  title={The organization of behavior: A neuropsychological approach},
  author={Hebb, Donald Olding},
  year={1949},
  publisher={John Wiley \& Sons}
}

@book{dayan2001theoretical,
  title={Theoretical neuroscience},
  author={Dayan, Peter and Abbott, Laurence F},
  volume={806},
  year={2001},
  publisher={Cambridge, MA: MIT Press}
}

@book{gerstner2002spiking,
  title={Spiking neuron models: Single neurons, populations, plasticity},
  author={Gerstner, Wulfram and Kistler, Werner M.},
  year={2002},
  publisher={Cambridge university press}
}

@article{hopfield1982neural,
  title={Neural networks and physical systems with emergent collective computational abilities},
  author={Hopfield, John J},
  journal={Proceedings of the national academy of sciences},
  volume={79},
  number={8},
  pages={2554--2558},
  year={1982},
  publisher={National Acad Sciences}
}

@article{martin2000synaptic,
  title={Synaptic plasticity and memory: an evaluation of the hypothesis},
  author={Martin, SJ and Grimwood, PD and Morris, RGM},
  journal={Annual review of neuroscience},
  volume={23},
  number={1},
  pages={649--711},
  year={2000},
  publisher={Annual Reviews 4139 El Camino Way, PO Box 10139, Palo Alto, CA 94303-0139, USA}
}

@article{song2000competitive,
  title={Competitive Hebbian learning through spike-timing-dependent synaptic plasticity},
  author={Song, Sen and Miller, Kenneth D and Abbott, Larry F},
  journal={Nature neuroscience},
  volume={3},
  number={9},
  pages={919--926},
  year={2000},
  publisher={Nature Publishing Group}
}

@article{galtier2013biological,
  title={A biological gradient descent for prediction through a combination of stdp and homeostatic plasticity},
  author={Galtier, Mathieu N and Wainrib, Gilles},
  journal={Neural computation},
  volume={25},
  number={11},
  pages={2815--2832},
  year={2013},
  publisher={MIT Press}
}



@article {PTW1,
    AUTHOR = {Wainrib, Gilles and Thieullen, Mich\`ele and Pakdaman,
              Khashayar},
     TITLE = {Reduction of stochastic conductance-based neuron models with
              time-scales separation},
   JOURNAL = {J. Comput. Neurosci.},
  FJOURNAL = {Journal of Computational Neuroscience},
    VOLUME = {32},
      YEAR = {2012},
    NUMBER = {2},
     PAGES = {327--346},
      ISSN = {0929-5313},
   MRCLASS = {92C20},
  MRNUMBER = {2904337},
       URL = {https://doi.org/10.1007/s10827-011-0355-7},
}

@article{galtier2012multiscale,
  title={Multiscale analysis of slow-fast neuronal learning models with noise},
  author={Galtier, Mathieu and Wainrib, Gilles},
  journal={The Journal of Mathematical Neuroscience},
  volume={2},
  number={1},
  pages={1--64},
  year={2012},
  publisher={Springer}
}

@article {BossyFaugerasTalay,
    AUTHOR = {Bossy, Mireille and Faugeras, Olivier and Talay, Denis},
     TITLE = {Clarification and complement to ``{M}ean-field description and
              propagation of chaos in networks of {H}odgkin-{H}uxley and
              {F}itz{H}ugh-{N}agumo neurons''},
   JOURNAL = {J. Math. Neurosci.},
  FJOURNAL = {Journal of Mathematical Neuroscience},
    VOLUME = {5},
      YEAR = {2015},
     PAGES = {Art. 19, 23},
      ISSN = {2190-8567},
   MRCLASS = {92C20 (34F05 60F99 60J60 60K35)},
  MRNUMBER = {3392551},
       DOI = {10.1186/s13408-015-0031-8},
       URL = {http://dx.doi.org/10.1186/s13408-015-0031-8},
}

@article {Demasietal,
    AUTHOR = {De Masi, A. and Galves, A. and L{\"o}cherbach, E. and
              Presutti, E.},
     TITLE = {Hydrodynamic limit for interacting neurons},
   JOURNAL = {J. Stat. Phys.},
  FJOURNAL = {Journal of Statistical Physics},
    VOLUME = {158},
      YEAR = {2015},
    NUMBER = {4},
     PAGES = {866--902},
      ISSN = {0022-4715},
   MRCLASS = {92C20 (60F17 60J25 60K35 92B20)},
  MRNUMBER = {3311484},
       DOI = {10.1007/s10955-014-1145-1},
       URL = {http://dx.doi.org/10.1007/s10955-014-1145-1},
}
@article{hormander1967hypoelliptic,
  title={Hypoelliptic second order differential equations},
  author={H{\"o}rmander, Lars},
  journal={Acta Mathematica},
  volume={119},
  pages={147--171},
  year={1967},
  publisher={Institut Mittag-Leffler}
}

@article{keyfitz2006fichera,
  title={The Fichera function and nonlinear equations},
  author={Keyfitz, BL},
  journal={Rendiconti Accademia Nazionale delle Scienze detta dei XL. Memorie di Matematica e Applicazioni},
  volume={30},
  number={1},
  pages={83--94},
  year={2006}
 }

@article{paronetto2020further,
  title={Further existence results for elliptic--parabolic and forward--backward parabolic equations},
  author={Paronetto, Fabio},
  journal={Calculus of Variations and Partial Differential Equations},
  volume={59},
  number={4},
  pages={1--30},
  year={2020},
  publisher={Springer}
}
@article{paronetto2004existence,
  title={Existence results for a class of evolution equations of mixed type},
  author={Paronetto, Fabio},
  journal={Journal of Functional Analysis},
  volume={212},
  number={2},
  pages={324--356},
  year={2004},
  publisher={Elsevier}
}
@article{paronetto2017local,
  title={Local boundedness for forward--backward parabolic {D}e {G}iorgi classes with coefficients depending on time},
  author={Paronetto, Fabio},
  journal={Nonlinear Analysis},
  volume={158},
  pages={168--198},
  year={2017},
  publisher={Elsevier}
}
@article{paronetto2016harnack,
  title={A {H}arnack's inequality for mixed type evolution equations},
  author={Paronetto, Fabio},
  journal={Journal of Differential Equations},
  volume={260},
  number={6},
  pages={5259--5355},
  year={2016},
  publisher={Elsevier}
}

%
@article{armstrong2019variational,
      title={Variational methods for the kinetic {F}okker-{P}lanck equation}, 
      author={D. Albritton and S. Armstrong and J. -C. Mourrat and M. Novack},
      	journal={arXiv preprint arXiv:1902.04037},
      year={2021},
}
@book{hypobookinvite,
author={Bramanti,Marco},
title={An Invitation to Hypoelliptic Operators and Hörmander's Vector Fields},
publisher={Springer International Publishing},
address={Cham},
edition={2014},
keywords={Analysis; Mathematics; Mathematics and Statistics; Operator Theory; Partial Differential Equations},
isbn={2191-8198},
language={English},

}
@article{hypohairer2011malliavin,
  title={On Malliavin's proof of H\"{o}rmander's theorem},
  author={Hairer, Martin},
  journal={Bulletin des sciences mathematiques},
  volume={135},
  number={6-7},
  pages={650--666},
  year={2011},
  publisher={Elsevier}
}
@book{nier2005hypoelliptic,
  title={Hypoelliptic estimates and spectral theory for Fokker-Planck operators and Witten Laplacians},
  author={Nier, Francis and Helffer, Bernard},
  year={2005},
  publisher={Springer Science \& Business Media}
}

@article {JGLiuZZ21,
    AUTHOR = {Liu, Jian-Guo and Wang, Ziheng and Xie, Yantong and Zhang,
              Yuan and Zhou, Zhennan},
     TITLE = {Investigating the integrate and fire model as the limit of a
              random discharge model: a stochastic analysis perspective},
   JOURNAL = {Math. Neurosci. Appl.},
  FJOURNAL = {Mathematical Neuroscience and Applications},
    VOLUME = {1},
      YEAR = {2021},
     PAGES = {Art. No. 2, 36},
   MRCLASS = {92C20 (60H10 60H30 92B20)},
  MRNUMBER = {4665139},
}

@misc{dou2022exponential,
      title={Exponential convergence to equilibrium for a two-speed model with variant drift fields via the resolvent estimate}, 
      author={Xu'an Dou and Zhennan Zhou},
      year={2022},
      note="arXiv,2201.12494"
}
@article{CDZ2022,
  title={A simplified voltage-conductance kinetic model for interacting neurons and its asymptotic limit},
  author={Carrillo, Jos{\'e} A and Dou, Xu'an and Zhou, Zhennan},
  journal={arXiv preprint arXiv:2203.02746},
  year={2022}
}
@article{hill1970sharp,
  AUTHOR = {Hill, C. Denson},
     TITLE = {A sharp maximum principle for degenerate elliptic-parabolic
              equations},
   JOURNAL = {Indiana Univ. Math. J.},
  FJOURNAL = {Indiana University Mathematics Journal},
    VOLUME = {20},
      YEAR = {1970/71},
     PAGES = {213--229},
      ISSN = {0022-2518},
   MRCLASS = {35.70},
  MRNUMBER = {287175},
MRREVIEWER = {J. K. Oddson},
       DOI = {10.1512/iumj.1970.20.20020},
       URL = {https://doi-org.accesdistant.sorbonne-universite.fr/10.1512/iumj.1970.20.20020},
}

@article{arnold2014sharp,
  title={Sharp entropy decay for hypocoercive and non-symmetric Fokker-Planck equations with linear drift},
  author={Arnold, Anton and Erb, Jan},
  journal={arXiv preprint arXiv:1409.5425},
  year={2014}
}
@inproceedings{bony1969principe,
  title={Principe du maximum, in{\'e}galit{\'e} de Harnack et unicit{\'e} du probleme de Cauchy pour les op{\'e}rateurs elliptiques d{\'e}g{\'e}n{\'e}r{\'e}s},
  author={Bony, Jean-Michel},
  booktitle={Annales de l'institut Fourier},
  volume={19},
  number={1},
  pages={277--304},
  year={1969}
}

@article{kovavcivc2009fokker,
  title={Fokker-Planck description of conductance-based integrate-and-fire neuronal networks},
  author={Kova{\v{c}}i{\v{c}}, Gregor and Tao, Louis and Rangan, Aaditya V and Cai, David},
  journal={Physical Review E},
  volume={80},
  number={2},
  pages={021904},
  year={2009},
  publisher={APS}
}
@incollection {FicheraMR0111931,
    AUTHOR = {Fichera, Gaetano},
     TITLE = {On a unified theory of boundary value problems for
              elliptic-parabolic equations of second order},
 BOOKTITLE = {Boundary problems in differential equations},
     PAGES = {97--120},
 PUBLISHER = {Univ. Wisconsin Press, Madison, Wis.},
      YEAR = {1960},
   MRCLASS = {35.00},
  MRNUMBER = {0111931},
MRREVIEWER = {A. N. Milgram},
}
@article {DerridjMR601055,
    AUTHOR = {Derridj, Makhlouf},
     TITLE = {Un probl\`eme aux limites pour une classe d'op\'{e}rateurs du second
              ordre hypoelliptiques},
   JOURNAL = {Ann. Inst. Fourier (Grenoble)},
  FJOURNAL = {Universit\'{e} de Grenoble. Annales de l'Institut Fourier},
    VOLUME = {21},
      YEAR = {1971},
    NUMBER = {4},
     PAGES = {99--148},
      ISSN = {0373-0956},
   MRCLASS = {35H05},
  MRNUMBER = {601055},
       URL = {http://www.numdam.org/item?id=AIF_1971__21_4_99_0},
}

@article {CattiauxMR1182641,
    AUTHOR = {Cattiaux, Patrick},
     TITLE = {Stochastic calculus and degenerate boundary value problems},
   JOURNAL = {Ann. Inst. Fourier (Grenoble)},
  FJOURNAL = {Universit\'{e} de Grenoble. Annales de l'Institut Fourier},
    VOLUME = {42},
      YEAR = {1992},
    NUMBER = {3},
     PAGES = {541--624},
      ISSN = {0373-0956},
   MRCLASS = {60H30 (35R60 60J35 60J60)},
  MRNUMBER = {1182641},
MRREVIEWER = {Kazuaki Taira},
       URL = {http://www.numdam.org/item?id=AIF_1992__42_3_541_0},
}

@article {MR1077278,
    AUTHOR = {Lin, Chang Shou and Tso, Kaising},
     TITLE = {On regular solutions of second order degenerate
              elliptic-parabolic equations},
   JOURNAL = {Comm. Partial Differential Equations},
  FJOURNAL = {Communications in Partial Differential Equations},
    VOLUME = {15},
      YEAR = {1990},
    NUMBER = {9},
     PAGES = {1329--1360},
      ISSN = {0360-5302},
   MRCLASS = {35M10 (35B65)},
  MRNUMBER = {1077278},
MRREVIEWER = {Min You Qi},
       DOI = {10.1080/03605309908820727},
       URL = {https://doi.org/10.1080/03605309908820727},
}
@article {eigenMR912210,
    AUTHOR = {Okumura, Hirozo},
     TITLE = {Boundary value problems and eigenvalue problems for second
              order equations with nonnegative characteristic form},
   JOURNAL = {Comm. Partial Differential Equations},
  FJOURNAL = {Communications in Partial Differential Equations},
    VOLUME = {12},
      YEAR = {1987},
    NUMBER = {12},
     PAGES = {1365--1387},
      ISSN = {0360-5302},
   MRCLASS = {35J25 (35K20)},
  MRNUMBER = {912210},
MRREVIEWER = {G. F. Roach},
       DOI = {10.1080/03605308708820533},
       URL = {https://doi.org/10.1080/03605308708820533},
}

@book {Oleinik,
    AUTHOR = {Oleĭnik, O. A. and Radkevič, E. V.},
     TITLE = {Second order equations with nonnegative characteristic form},
      NOTE = {Translated from the Russian by Paul C. Fife},
 PUBLISHER = {Plenum Press, New York-London},
      YEAR = {1973},
     PAGES = {vii+259},
      ISBN = {0-306-30751-0},
   MRCLASS = {35-02 (35G05 35JXX)},
  MRNUMBER = {0457908},
}
%
@article{radkevich2009equations,
  title={Equations with nonnegative characteristic form. I},
  author={Radkevich, E. V.},
  journal={Journal of Mathematical Sciences},
  volume={158},
  number={3},
  pages={297--452},
  year={2009},
  publisher={Springer}
}


%
@article{fu2021fokker,
  title={Fokker--Plank system for movement of micro-organism population in confined environment},
  author={Fu, Jingyi and Perthame, Benoit and Tang, Min},
  journal={Journal of Statistical Physics},
  volume={184},
  number={1},
  pages={1--25},
  year={2021},
  publisher={Springer}
}


@article{zbMATH07654553,
 author = {Ca{\~n}izo, Jos{\'e} A. and Mischler, St{\'e}phane},
 title = {Harris-type results on geometric and subgeometric convergence to equilibrium for stochastic semigroups},
 fjournal = {Journal of Functional Analysis},
 journal = {J. Funct. Anal.},
 issn = {0022-1236},
 volume = {284},
 number = {7},
 pages = {46},
 note = {Id/No 109830},
 year = {2023},
 language = {English},
 doi = {10.1016/j.jfa.2022.109830},
 keywords = {47D07,60J05,35B40},
 zbMATH = {7654553},
 Zbl = {1523.47052}
}

@article{Muckenhoupt1972,
author = {Muckenhoupt, Benjamin},
journal = {Studia Mathematica},
language = {eng},
number = {1},
pages = {31-38},
title = {Hardy's inequality with weights},
url = {http://eudml.org/doc/217718},
volume = {44},
year = {1972},
}

@article{zbMATH07903423,
 author = {Salort, Delphine and Smets, Didier},
 title = {Convergence towards equilibrium for a model with partial diffusion},
 fjournal = {Communications in Partial Differential Equations},
 journal = {Commun. Partial Differ. Equations},
 issn = {0360-5302},
 volume = {49},
 number = {5-6},
 pages = {410--427},
 year = {2024},
 language = {English},
 doi = {10.1080/03605302.2024.2344806},
 keywords = {35Q92,35Q84,92B20,35B40,82C32},
 zbMATH = {7903423},
 Zbl = {1546.35241}
}

@article{Ambrogi_2026,
doi = {10.1088/1361-6544/ae4cec},
url = {https://doi.org/10.1088/1361-6544/ae4cec},
year = {2026},
month = {mar},
publisher = {IOP Publishing},
volume = {39},
number = {3},
pages = {035013},
author = {Ambrogi, Elena and He, Qingyou and Salort, Delphine},
title = {Nonlinear stability for a two-dimensional Fokker-Planck equation with partial diffusion in neuroscience},
journal = {Nonlinearity},
}

@article{zbMATH07047481,
 author = {C{\'a}ceres, Mar{\'{\i}}a and Schneider, Ricarda},
 title = {Analysis and numerical solver for excitatory-inhibitory networks with delay and refractory periods},
 fjournal = {European Series in Applied and Industrial Mathematics (ESAIM): Mathematical Modelling and Numerical Analysis},
 journal = {ESAIM, Math. Model. Numer. Anal.},
 issn = {0764-583X},
 volume = {52},
 number = {5},
 pages = {1733--1761},
 year = {2018},
 language = {English},
 doi = {10.1051/m2an/2018014},
 keywords = {35K60},
 zbMATH = {7047481},
 Zbl = {1411.35178}
}


\begin{thebibliography}{10}

\bibitem{Ambrogi_2026}
{\sc E.~Ambrogi, Q.~He, and D.~Salort}, {\em Nonlinear stability for a
  two-dimensional fokker-planck equation with partial diffusion in
  neuroscience}, Nonlinearity, 39 (2026), p.~035013.

\bibitem{BrHa}
{\sc N.~Brunel and V.~Hakim}, {\em Fast global oscillations in networks of
  integrate-and-fire neurons with long firing rates}, Neural Computation, 11
  (1999), pp.~1621--1671.

\bibitem{caceres2018global}
{\sc M.~C{\'a}ceres, P.~Roux, D.~Salort, and R.~Schneider}, {\em Global-in-time
  classical solutions and qualitative properties for the nnlif neuron model
  with synaptic delay}, arXiv preprint arXiv:1806.01934,  (2018).

\bibitem{zbMATH07047481}
{\sc M.~C{\'a}ceres and R.~Schneider}, {\em Analysis and numerical solver for
  excitatory-inhibitory networks with delay and refractory periods}, ESAIM,
  Math. Model. Numer. Anal., 52 (2018), pp.~1733--1761.

\bibitem{zbMATH08030978}
{\sc M.~J. C{\'a}ceres, J.~A. Ca{\~n}izo, and A.~Ramos-Lora}, {\em On the
  asymptotic behavior of the {NNLIF} neuron model for general connectivity
  strength}, Commun. Math. Phys., 406 (2025), p.~55.
\newblock Id/No 115.

\bibitem{CCP}
{\sc M.~J. C\'aceres, J.~A. Carrillo, and B.~Perthame}, {\em Analysis of
  nonlinear noisy integrate \& fire neuron models: blow-up and steady states},
  J. Math. Neurosci., 1 (2011), pp.~Art. 7, 33.

\bibitem{CaPe}
{\sc M.~J. C{\'a}ceres and B.~Perthame}, {\em Beyond blow-up in excitatory
  integrate and fire neuronal networks: refractory period and spontaneous
  activity}, J. Theoret. Biol., 350 (2014), pp.~81--89.

\bibitem{CGGS}
{\sc J.~A. Carrillo, M.~D.~M. Gonz{\'a}lez, M.~P. Gualdani, and M.~E.
  Schonbek}, {\em Classical solutions for a nonlinear {Fokker}-{Planck}
  equation arising in computational neuroscience}, Commun. Partial Differ.
  Equations, 38 (2013), pp.~385--409.

\bibitem{CPSS}
{\sc J.~A. Carrillo, B.~Perthame, D.~Salort, and D.~Smets}, {\em Qualitative
  properties of solutions for the noisy integrate and fire model in
  computational neuroscience}, Nonlinearity, 28 (2015), pp.~3365--3388.

\bibitem{RouxCar}
{\sc J.~A. Carrillo and P.~Roux}, {\em Nonlinear partial differential equations
  in neuroscience: From modeling to mathematical theory}, Mathematical Models
  and Methods in Applied Sciences, 35 (2025), pp.~403--584.

\bibitem{CaceresCanizo2024}
{\sc M.~J. Cáceres, J.~A. Cañizo, and A.~Ramos-Lora}, {\em Sequence of
  pseudoequilibria describes the long-time behavior of the nonlinear noisy
  leaky integrate-and-fire model with large delay}, 2024.
\newblock ar{X}iv/2403.00971.

\bibitem{IRSS2022}
{\sc K.~Ikeda, P.~Roux, D.~Salort, and D.~Smets}, {\em Theoretical study of the
  emergence of periodic solutions for the inhibitory {NNLIF} neuron model with
  synaptic delay}, Math. Neurosci. Appl., 2 (2022), pp.~Art. No. 4, 37.

\bibitem{zbMATH01061253}
{\sc G.~M. Lieberman}, {\em Second order parabolic differential equations},
  Singapore: World Scientific, 1996.

\bibitem{zbMATH08113917}
{\sc B.~Perthame, C.~Rieutord, and D.~Salort}, {\em Strongly nonlinear
  age-structured equation, time-elapsed model and large delays}, J. Math.
  Biol., 91 (2025), p.~29.
\newblock Id/No 65.

\bibitem{arXiv:2601.19282}
\leavevmode\vrule height 2pt depth -1.6pt width 23pt, {\em A {Fokker}-{Planck}
  equation with superlinear drift at infinity for {Integrate}-and-{Fire}
  model}.
\newblock Preprint, {arXiv}:2601.19282 [math.{AP}] (2026), 2026.

\bibitem{zbMATH07903423}
{\sc D.~Salort and D.~Smets}, {\em Convergence towards equilibrium for a model
  with partial diffusion}, Commun. Partial Differ. Equations, 49 (2024),
  pp.~410--427.

\end{thebibliography}

\end{document}